\newtheorem{thm}{Theorem}[section]
\newtheorem{lemma}[thm]{Lemma}
\newtheorem{cor}[thm]{Corollary}
\theoremstyle{definition}
\newtheorem{defn}[thm]{Definition}
\newtheorem{ex}[thm]{Example}
\newtheorem{exercise}[thm]{Exercise}
\theoremstyle{remark}
\newtheorem{rmk}[thm]{Remark}
\newcommand{\co}{\colon\thinspace}
\def\H{\mathbb{H}}
\def\R{\mathbb{R}}
\def\Z{\mathbb{Z}}
\def\E{\mathbb{E}}
\def\tri{\mathcal{T}}
\def\bdry{\partial}
\def \bA{\mathbf{A}} 
\def \bp{\mathbf{p}}
\def\kk{\boldsymbol{k}}
\def\aa{\boldsymbol{a}}
\def\bb{\boldsymbol{b}}
\def\cc{\boldsymbol{c}}
\def\bnu{\boldsymbol{\nu}}
\def\BN{\mathbb N}
\def\BZ{\mathbb Z}
\def\BR{\mathbb R}
\def\BC{\mathbb C}
\def\BH{\mathbb H}
\def\calF{\mathcal F}
\def\calX{\mathcal X}
\def\a{\alpha}
\def\d{\delta}
\def\ti{\widetilde}
\def\longto{\longrightarrow}
\def\pt{\partial}
\def\lbl{\label}
\newcommand{\CT}{\mathcal{T}}
\def\be{  \begin{equation} }
\def\ee{  \end{equation} }
\def\ID{I_{\Delta}}
\def\EP{\mathrm{EP}}
\newcommand{\wt}{\widetilde}
\def\GA{\mathrm{GA}}
\def\SA{\mathrm{SA}}
\newcommand{\mb}{\mathbf}
\def\bd{\partial}
\def\JD{J_{\Delta}}
\begin{document}


\title[1-efficient triangulations and the index of a cusped hyperbolic 
3-manifold]{1-efficient triangulations and the index of a cusped hyperbolic 
3-manifold}
\author{Stavros Garoufalidis}
\address{School of Mathematics \\
         Georgia Institute of Technology \\
         Atlanta, GA 30332-0160, USA \newline
         {\tt \url{http://www.math.gatech.edu/~stavros }}}
\email{stavros@math.gatech.edu}
\author{Craig D. Hodgson}
\address{Department of Mathematics and Statistics \\
         The University of Melbourne \\
         Parkville, VIC, 3010, AUSTRALIA
         \tt{\url{http://www.ms.unimelb.edu.au/~cdh }}}
\email{craigdh@unimelb.edu.au}
\author{J. Hyam Rubinstein}
\address{Department of Mathematics and Statistics \\
         The University of Melbourne \\
         Parkville, VIC, 3010, AUSTRALIA
         \tt{\url{http://www.ms.unimelb.edu.au/~rubin }}}
\email{rubin@ms.unimelb.edu.au}
\author{Henry Segerman}
\address{Department of Mathematics and Statistics \\
         The University of Melbourne \\
         Parkville, VIC, 3010, AUSTRALIA
         \tt{\url{http://www.ms.unimelb.edu.au/~segerman }}}
\email{segerman@unimelb.edu.au}
\thanks{S.G. was supported in part by grant DMS-0805078 of the US 
National Science Foundation. C.D.H, J.H.R, H.S are supported by the Australian Research Council grant DP1095760.
\\
\newline
2010 {\em Mathematics Classification.} Primary 57N10, 57M50. Secondary 57M25.
\newline
{\em Key words and phrases: ideal triangulations, hyperbolic 3-manifolds, gluing equations
3D index, invariants, 1-efficient triangulations.
}
}

\date{\today }

\begin{abstract}
In this paper we will promote the 3D index of an ideal triangulation $\CT$ 
of an oriented cusped 3-manifold $M$ 
(a collection of $q$-series with integer coefficients, 
introduced by Dimofte-Gaiotto-Gukov) to a topological invariant of oriented cusped
hyperbolic 3-manifolds. To achieve our goal we show that (a) $\CT$ admits an 
index structure if and only if $\CT$ is 1-efficient and 
(b) if $M$ is hyperbolic, it has a canonical set of 1-efficient ideal 
triangulations related by 2-3 and 0-2 moves which preserve the 3D index. 
We illustrate our results with several examples. 
\end{abstract}

\maketitle

\tableofcontents


\section{Introduction}
\label{sec.intro}

\subsection{The 3D index of Dimofte-Gaiotto-Gukov}
\label{sub.DGG}

The goal of this paper is to convert the index of an ideal triangulation
$\CT$ (a remarkable collection of Laurent series in $q^{1/2}$ introduced by 
Dimofte-Gaiotto-Gukov \cite{DGG1,DGG2} and further studied in \cite{Ga:index})
to a topological invariant of oriented cusped hyperbolic 3-manifolds $M$. 
Our goal will be achieved in two steps. 

The first step identifies the 
existence of an index 
structure of $\CT$ (a necessary and sufficient condition for the existence
of the index of $\CT$; see \cite{Ga:index}) with the non-existence
of sphere or non vertex-linking torus {\em normal surfaces} of $\CT$; see Theorem
\ref{index_structure_iff_1-efficient} below. Such ideal triangulations are called 1-efficient in 
\cite{JR,KR2}. 
The unexpected connection between the 
index of an ideal triangulation (a recent quantum object) and the 
classical theory of normal surfaces places restrictions
on the topology of $M$; see Remark \ref{rem.1} below. 

The second step
constructs a canonical collection $\calX_M^{\EP}$ of triangulations
of the Epstein-Penner ideal cell decomposition of a cusped hyperbolic 
3-manifold $M$, such that the index behaves well with respect to 2--3 and 0--2 moves that connect any two members of $\calX_M^{\EP}$. The index
of those triangulations then gives the desired topological invariant of $M$;
see Theorem \ref{thm.2} below.

We should point out that normal surfaces were
also used by Frohman-Bartoczynska~\cite{FK} in an attempt to construct 
topological invariants of
3-manifolds, in the style of a Turaev-Viro TQFT. Strict angle structures 
(a stronger form of an index structure) play a role in quantum hyperbolic 
geometry studied by Baseilhac-Benedetti \cite{BB1,BB2}. In the recent work 
of Andersen-Kashaev~\cite{AK}, strict angle structures
were used as sufficient conditions for convergence of analytic state-integral 
invariants of ideal triangulations. The latter invariants are 
expected to depend on the underlying cusped 3-manifold and to form
a generalization of the Kashaev invariant 
\cite{Ka}. The $q$-series of Theorem \ref{thm.2} below are $q$-holonomic,
of Nahm-type and, apart from a meromorphic singularity at $q=0$, admit analytic
continuation in the punctured unit disc. 

Before we get to the details, we should stress that the origin
of the 3D index is the exciting work of Dimofte-Gaiotto-Gukov 
\cite{DGG1,DGG2} (see also \cite{holo-blocks})
who studied gauge theories with $N=2$ supersymmetry that are 
associated to an ideal triangulation $\CT$ of an oriented 3-manifold $M$ 
with at least one cusp. The low-energy limit of these gauge theories gives
rise to a partially defined function, the so-called {\em 3D index}
\be
\lbl{eq.ICT}
I: \{\text{ideal triangulations}\}
\longto \BZ((q^{1/2}))^{H_1(\pt M;\BZ)}, \qquad \CT \mapsto 
I_{\CT}([\varpi]) \in \BZ((q^{1/2}))
\ee
for $[\varpi] \in H_1(\pt M;\BZ)$.\footnote{Here and below we will  
use the notation $M$ for both a cusped hyperbolic 3-manifold 
and the corresponding compact manifold with boundary $\bd M$ consisting of a disjoint union of tori; 
the intended meaning should be clear from the context.}
The function $I$ is only partially defined because the expression for the 3D index may not converge.
The above gauge theories provide an analytic
continuation of the coloured Jones polynomial and play an important role
in Chern-Simons perturbation theory and in categorification.
Although the gauge theory depends on the ideal triangulation $\CT$, 
and the 3D index in general may not be defined, physics predicts that the 
gauge theory ought to be a topological invariant of the 
underlying 3-manifold $M$. Recall that any two ideal triangulations 
of a cusped 3-manifold are related by a sequence of 2-3 moves 
\cite{Ma1,Ma2,Pi}. In \cite{Ga:index} the following was shown.
For the definition of an index structure, see Section \ref{sec.definitions}.

\begin{thm}
\lbl{thm.00}
\rm{(a)}
$I_{\CT}$ is well-defined if and only if $\CT$ admits an {\em index structure}.
\newline
\rm{(b)} If $\CT$ and $\CT'$ are related by a 2--3 move and both admit an
index structure, then $I_{\CT}=I_{\CT'}$.
\end{thm}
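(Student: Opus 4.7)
The plan is to prove parts (a) and (b) by interpreting the definition of $I_\CT$ combinatorially and then applying, respectively, linear-programming duality and a local $q$-series identity. Recall that $I_\CT([\varpi])$ is a formal sum over a lattice of $\BZ$-valued edge/quad data (satisfying linear edge-gluing constraints and a peripheral constraint prescribed by $[\varpi]\in H_1(\pt M;\BZ)$) of products $\prod_i I_\Delta(m_i,e_i)(q)$ ranging over the tetrahedra of $\CT$, where the tetrahedral index $I_\Delta(m,e)(q)$ is an explicit $q$-hypergeometric series whose minimum $q^{1/2}$-degree is a known piecewise-linear function $\delta(m,e)$.

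For part (a), well-definedness of $I_\CT$ as an element of $\BZ((q^{1/2}))$ is equivalent to the assertion that, for each integer $N$, only finitely many configurations contribute to the coefficient of $q^{N/2}$. Since the $q$-valuation of each summand is bounded below by $\sum_i \delta(m_i,e_i)$, this reduces to coercivity of the piecewise-linear function $\sum_i \delta(m_i,e_i)$ on the affine lattice cut out by the edge and peripheral constraints. By Farkas' lemma applied to the recession cone of this lattice, coercivity is equivalent to the existence of a strictly positive rational linear functional that dominates $\delta$ at each tetrahedron while respecting the edge-gluing linear relations. Unpacking what this functional looks like tetrahedron-by-tetrahedron recovers exactly the combinatorial data of an \emph{index structure} on $\CT$ as defined in Section~\ref{sec.definitions}, giving the equivalence.

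For part (b), assume $\CT'$ is obtained from $\CT$ by a single 2--3 move replacing two tetrahedra $\Delta_1,\Delta_2$ by three tetrahedra $\Delta_1',\Delta_2',\Delta_3'$. The strategy is to partition the lattice of configurations for $\CT$ (respectively $\CT'$) into fibres over the ``external'' data, consisting of edge/quad coordinates on all unchanged tetrahedra together with the boundary conditions on the edges that survive the move. Fibrewise the identity $I_\CT=I_{\CT'}$ reduces to a local $q$-series identity — the \emph{pentagon identity} for the tetrahedral index — that equates a sum over $\BZ$ of products of two $I_\Delta$'s with a sum over $\BZ^2$ of products of three $I_\Delta$'s, with matching parameters dictated by the combinatorics of the move. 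Once this identity is in hand, re-summation over external data yields the global equality; the index structures on both sides, transported through the move via a convex interpolation, guarantee absolute convergence in the $q$-valuation so that the Fubini-style interchange of summations is legitimate.

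The main obstacle is the pentagon identity itself, which is a non-trivial $q$-hypergeometric identity (the form required here is essentially the one obtained from the quantum-pentagon identity for the non-compact quantum dilogarithm in \cite{DGG1,Ga:index}). A secondary technical point in part (a) is matching, on the nose, the Farkas-dual datum with the combinatorial index structure: this is a bookkeeping exercise but requires care with the signs attached to quad types and with distinguishing edge constraints from peripheral (cusp) constraints.
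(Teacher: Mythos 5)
Your approach to part (b) is essentially the one the paper uses in its appendix: reduce 2--3 invariance to a local identity (Lemma \ref{lem_bipyramid}) which, after unwinding the $\JD$-normalization, is the pentagon identity \eqref{eq.pentagon}. One small slip: the local identity equates a \emph{sum over $\BZ$} (not $\BZ^2$) of products of three $\ID$'s with a single product of two $\ID$'s, with no fibrewise sum on the two-tetrahedron side --- the 2--3 move introduces exactly one new edge and hence exactly one new integer summation variable. Your remark that absolute convergence (in the $q$-adic sense) justifies re-summation over external data is correct, and is exactly where the index-structure hypothesis on both sides is used.

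For part (a), which the paper cites from \cite{Ga:index} rather than reproving, your proposal has a genuine gap. You claim the minimum $q$-degree $\delta(m,e)$ of $\ID(m,e)$ is piecewise linear and then apply Farkas' lemma directly to the resulting piecewise-linear coercivity problem. But $\delta(m,e)$ is piecewise \emph{quadratic} --- see \eqref{eq.degID} and Figure \ref{degI} --- so $\sum_j \delta(m_j,e_j)$ restricted to the summation lattice is piecewise quadratic, and Farkas' lemma does not apply directly to its coercivity. The correct reduction is two-step: along generic rays of the underlying fan $\delta$ grows quadratically, so those directions pose no convergence problem; the dangerous rays are those along which the growth degenerates to linear, and these are characterized by a fixed quad-choice $Q$ (one quad per tetrahedron). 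Positivity of the linear slope on each such ray, dualized via LP/Farkas, produces a generalized angle structure $\alpha_Q$ with $\alpha_Q(Q_n)>0$ for all $n$, and quantifying over all $3^N$ quad-choices recovers the definition of an index structure. Your sketch collapses the quadratic-to-linear reduction and the indexing by quad-choices into a single LP feasibility statement that, as written, does not hold.
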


\subsection{Index structures and 1-efficiency}
\lbl{sub.1eff}

\begin{thm}
\lbl{index_structure_iff_1-efficient}
An ideal triangulation $\CT$ of an oriented 3-manifold with cusps admits 
an index structure if and only if $\CT$ is 1-efficient.
\end{thm}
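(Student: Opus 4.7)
The plan is to set up the existence of an index structure as the feasibility of a finite system of linear equalities and strict linear inequalities on angle-like variables $(\alpha_i^j)$ attached to the three pairs of opposite edges of each tetrahedron $\Delta_j$ of $\CT$, and then dualize using a theorem of the alternative (Farkas/Motzkin/Stiemke) to extract, in the infeasible case, a non-trivial normal surface whose $Q$-coordinates are exactly the multipliers produced by the alternative. This follows the philosophy of Kang--Rubinstein and Luo--Tillmann for strict angle structures, but the shape of the inequalities and the normal-surface interpretation have to be adjusted to fit an index structure rather than an honest angle structure.

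For the easy implication, I would assume that $\CT$ carries an index structure $\alpha$ and, for contradiction, that $\CT$ contains a normal sphere or a non-vertex-linking normal torus $S$. The plan is to compute the combinatorial Euler characteristic $\chi(S)$ via the standard angle-sum identity, writing $\chi(S)$ as a sum indexed by the normal quads and triangles of $S$, each term a linear combination of the angles of the index structure at the corresponding tetrahedron. Using the per-tetrahedron equation and the $2\pi$-gluing equation around each edge of $\CT$, the expression collapses, and the strict inequalities of the index structure force the resulting value to be strictly negative, contradicting $\chi(S) \geq 0$.

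For the harder direction, I would assume $\CT$ is 1-efficient and suppose that no index structure exists. The theorem of the alternative then produces non-negative multipliers, not all zero, on the per-edge gluing equations, the per-tetrahedron equations and the strict positivity inequalities, satisfying the dual vanishing condition. The next step is to reassemble these multipliers into a vector of $Q$-coordinates on the tetrahedra of $\CT$, working in Tollefson's $Q$-theory so that vertex-linking tori are invisible. The dual-vanishing condition on the edge multipliers becomes precisely the $Q$-matching equation around each edge, so the vector represents a non-trivial normal surface $S$; the remaining Farkas condition translates into $\chi(S) \geq 0$, and non-vanishing in $Q$-coordinates ensures that $S$ is not merely a union of vertex-linking tori. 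Hence $S$ has a component that is a normal sphere or a non-vertex-linking normal torus, contradicting 1-efficiency.

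The main obstacle is the bookkeeping of this last reassembly: choosing the LP dual so that the multipliers actually combine into non-negative $Q$-coordinates, so that their matching equations are the $Q$-matching equations of $\CT$, and so that the residual Farkas inequality reads off as $\chi(S) \geq 0$. This forces a careful matching between the shape of the index structure inequalities, which are more flexible than those of a strict angle structure, and the combinatorics of $Q$-normal surface theory, while keeping vertex-linking tori invisible so that the surface produced by the dual is genuinely non-trivial.
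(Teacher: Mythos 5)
Your proposal is essentially the paper's proof, with two small variations worth noting.

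For the hard direction (1-efficient $\Rightarrow$ index structure exists), the paper also fixes a quad-choice $Q$, writes the existence of the required generalised angle structure as an LP feasibility problem, and applies Farkas' lemma (the Luo--Tillmann version, with free variables handled by the usual $x_l = x_l^+ - x_l^-$ trick). The multipliers $(z_j, w_i)$ on the edge and tetrahedron equations are sign-free, not non-negative as you wrote; what becomes non-negative are the resulting quad coordinates $-(w_i+z_j+z_k)$ of the normal surface class $W_{w,z}$. The paper does the reassembly not in Tollefson's $Q$-theory but in the full $7n$-coordinate solution space, using the Kang--Rubinstein basis (one element per edge and one per tetrahedron) so that the dual variables literally are the coefficients in that basis; it then adds vertex-linking copies to make triangle coordinates non-negative before realising an embedded surface. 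Your $Q$-theory framing is a legitimate alternative that avoids the vertex-linking copies explicitly, but it replaces the clean ``multipliers = KR coefficients'' identification with extra bookkeeping, so it's not obviously simpler.

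For the easy direction (1-efficient fails $\Rightarrow$ no index structure), you use Gauss--Bonnet directly on the offending normal surface $S$; the paper instead reverses the Farkas argument, reading the KR coefficients of $S$ as an obstruction class. These are morally the same computation (the $y\cdot b$ quantity in Farkas \emph{is} the generalised Euler characteristic), but your version is more self-contained. One point you should make explicit: the strict negativity of the quad curvatures $-2\alpha(q)$ only holds after you choose the quad-choice $Q$ to agree with the quad types appearing in $S$ (possible since $S$ is embedded, hence has at most one quad type per tetrahedron) and then use the corresponding $\alpha_Q$ from the index structure. Without that selection, the argument only gives $\chi(S) \leq 0$, which rules out spheres and projective planes but not tori or Klein bottles.
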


The above theorem has some consequences for our sought topological invariants.

\begin{rmk}
\lbl{rem.1}
1-efficiency of $\CT$ implies restrictions on the topology of $M$: it follows
that $M$ is irreducible and atoroidal. 
Note that here by \emph{atoroidal}, we mean that any \emph{embedded} torus 
is either compressible or boundary parallel. It follows by 
Thurston's Hyperbolization Theorem 
in dimension $3$ that $M$ is hyperbolic or 
small Seifert-fibred.
\end{rmk}

\begin{rmk}
\lbl{rem.2}
If $K$ is the connected sum of the $4_1$ and $5_2$ knots, or $K'$ is the
Whitehead double of the $4_1$ knot and $\CT$ is any ideal triangulation of
the complement of $K$ or $K'$, then $\CT$ is not 1-efficient, thus
$I_{\CT}$ never exists. On the other hand, the (coloured) Jones polynomial, 
the Kashaev invariant and the $\mathrm{PSL}(2,\BC)$-character variety of 
$K$ and $K'$ happily exist; see \cite{Jo,Ka,CCGLS}.
\end{rmk}

\begin{thm}
\lbl{semi-angle => 1-efficient}
Let $\CT$ be an ideal triangulation of an oriented atoroidal 3--manifold with 
at least one cusp. If $\CT$ admits a semi--angle structure then $\CT$ is 1--efficient.
\end{thm}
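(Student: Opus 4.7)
The plan is to adapt the standard combinatorial Gauss--Bonnet argument for strict angle structures to the semi-angle setting, and then use the atoroidal hypothesis to handle the degeneracies that arise.

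To each normal disc $D$ with $n_D$ corners carrying dihedral angles $\theta_c$ from the semi-angle structure, I assign the combinatorial area $a(D) := (n_D - 2)\pi - \sum_c \theta_c$. A short calculation shows $a(D) = 0$ for every normal triangle (its three corners sit at a single ideal vertex of a tetrahedron, where the face angles sum to $\pi$) and $a(D) = 2\alpha$ for every normal quadrilateral, where $\alpha$ is the dihedral angle of the pair of edges the quad separates (the four corners lie on the four remaining edges, whose total corner angle is $2(\pi - \alpha)$). Semi-angularity gives $a(D) \geq 0$ in both cases. Summing over the discs of a closed embedded normal surface $S$, and using that each edge of $S$ is shared by exactly two discs and that the dihedral angles around each edge of $\CT$ sum to $2\pi$, one obtains the combinatorial Gauss--Bonnet identity
\[
-2\pi\,\chi(S) \;=\; \sum_{D \subset S} a(D) \;\geq\; 0,
\]
so $\chi(S) \leq 0$. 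In particular $\CT$ admits no normal $2$-sphere, and by standard normalization $M$ is then irreducible.

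It remains to show every normal torus $T$ is vertex-linking. Equality $\chi(T) = 0$ forces $a(D) = 0$ on every disc of $T$, so every quadrilateral in $T$ has $\alpha = 0$. If $T$ contains no quadrilateral, then $T$ is a disjoint union of vertex links and we are done. Otherwise $T$ is non-vertex-linking; by the atoroidality hypothesis $T$ is then either $\partial$-parallel or compressible in $M$. In the $\partial$-parallel subcase, $T$ cobounds a product region $T^2 \times I$ with a vertex link, and I use an innermost-disc simplification inside that region, together with the rigidity forced by $a(D) = 0$, to conclude that $T$ coincides with the vertex link, contradicting the presence of the quadrilateral. In the compressible subcase, $T$ bounds a solid torus $V$ on one side, and the task is to show $V$ is a cusp neighborhood, so that $T$ is vertex-linking.

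The main obstacle is ruling out ``knotted'' solid tori in the compressible subcase: an atoroidal manifold can contain solid tori whose cores are essential (e.g.\ tubular neighborhoods of closed geodesics), so atoroidality alone is insufficient. The extra ingredient is the rigidity supplied by the semi-angle structure: every quad of $T$ has $\alpha = 0$, and the expected argument is a careful analysis of how these degenerate quads must be glued across the tetrahedra inside $V$, combined with irreducibility of $M$ and atoroidality, forcing $V$ to be a cusp neighborhood. This combinatorial/normal-surface analysis, bridging the Gauss--Bonnet rigidity on $T$ with the topology of $V$, is the heart of the proof.
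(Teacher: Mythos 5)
Your combinatorial Gauss--Bonnet setup (triangles contribute $a(D)=0$, quads contribute $a(D)=2\alpha\geq 0$, giving $\chi(S)\leq 0$) is correct and matches the opening step of the paper's proof: it rules out normal spheres and projective planes. But there is a genuine gap, at exactly the spot you flag yourself. For the torus case you need a strict inequality somewhere, and the equality ``$a(D)=0$ on every disc of $T$'' constrains only the surface $T$, not the topology of the region it bounds. Neither of your proposed continuations is sound as stated: there is no innermost-disc simplification for a boundary-parallel normal \emph{torus} (that technique applies to spheres), and in the compressible case you explicitly leave the key step unproved (``the expected argument is \dots''). You also omit the possibility that the compressing disc lies on the non-$V$ side, i.e.\ that $T$ bounds a cube with knotted hole rather than a solid torus; that case does not fit either of your two subcases.

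The paper's argument takes a different route that supplies precisely the missing strict inequality. After the Euler-characteristic count excludes spheres, the cube-with-knotted-hole case is dispatched by a barrier argument (following \cite{JR}) that produces a normal $2$-sphere, already ruled out. For the remaining cases --- a normal torus bounding a solid torus, or one isotopic but not normally isotopic to a vertex link --- one does not analyze the degenerate torus directly but instead runs the Rubinstein--Stocking minimax sweepout from $T$ to the core curve of the solid torus (respectively to the vertex-linking torus). The minimax surface is \emph{almost normal}: either a normal sphere with a tube along an edge (excluded), or a surface containing a single octagonal disc in one tetrahedron. The decisive observation, absent from your proposal, is that a normal octagon in a tetrahedron carrying a semi-angle structure has combinatorial curvature $2\alpha-4\pi<0$, which is \emph{strictly} negative even when the angles are allowed to equal $0$ or $\pi$. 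This upgrades $\chi\leq 0$ to a strict inequality on the almost-normal torus, giving the contradiction. Without the almost-normal/octagon ingredient, the semi-angle hypothesis alone is too weak to force a non-vertex-linking normal torus out of existence, and your ``heart of the proof'' remains unfilled.
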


\begin{rmk}
\lbl{rem.3}
Taut and strict
angle structures are examples of semi-angle structures, and for these cases this is proved in \cite[Thm.2.6]{KR2}. In Section \ref{sec: index structures and 1-efficiency}, we give a brief outline of the argument for a general semi--angle structure.
\end{rmk}

\begin{rmk}
In Corollary \ref{atoroidal=>1-efficient}, we note that a construction of Lackenby produces triangulations with taut angle structures, which are therefore 1--efficient, on all irreducible an-annular cusped 3--manifolds. However, it is not clear that the triangulations produced by this construction are connected by the appropriate 2--3 and 0--2 moves, so we cannot prove that the 3D index is independent of the choice of taut triangulation for the manifold. 
\end{rmk}

\subsection{Regular ideal triangulations and topological invariance}
\lbl{sub.regular}

In view of Remark \ref{rem.1}, we restrict our attention to 
hyperbolic 3-manifolds $M$ with at least one cusp. 
All we need is a canonical set 
$\calX_M$ of $1$--efficient ideal triangulations of $M$ such that any two of these 
triangulations are related by moves 
that preserve $I_{\CT}$. From Theorem \ref{thm.00}, we know that we can use 2--3 and 3--2 moves for this purpose. Given the choice we will make for $\calX_M$ below, it turns out that we will also need to use 0--2 and 2--0 moves to connect together the triangulations of $\calX_M$. 
Using the dual language of special spines, it is shown in 
\cite[Lem.2.1.11]{Ma2} and \cite{Pi}
(see also \cite[Prop.I.1.13]{Petronio:thesis})
that the 0--2 and 2--0 moves can be derived from the 2--3 and 3--2 moves, as long as the triangulation has at least two tetrahedra. However, the required sequence of 2--3 and 3--2 moves takes us out of our set $\calX_M$, and it is not clear that the triangulations the sequence passes through are 1--efficient.

Every cusped hyperbolic 3--manifold $M$ has
a canonical {\em cell decomposition} \cite{EP} where the cells are convex
ideal polyhedra in $\BH^3$. The cells can be triangulated into ideal 
tetrahedra, with layered flat tetrahedra inserted to form a bridge between two polyhedron faces that are supposed to be glued to each other but whose induced triangulations do not match. Unfortunately, it is not known whether any two triangulations
of a 3-dimensional polyhedron are related by 2--3 and 3--2 moves; the corresponding
result trivially holds in dimension 2 and nontrivially fails in dimension
$5$; \cite{DeLoera:book,Santos:ICM}. Nonetheless, it was shown by
Gelfand-Kapranov-Zelevinsky that any two {\em regular} triangulations 
of a polytope in $\BR^n$ are related by a sequence of geometric bistellar 
flips; \cite{GKZ}. Using the Klein model of $\BH^3$, we define the notion
of a \emph{regular ideal triangulation} of an ideal polyhedron and observe that
every two regular ideal triangulations are related by a sequence of geometric
2--2, 2--3 and 3--2 moves. Our set $\calX_M^{\EP}$ of ideal triangulations of a cusped hyperbolic manifold $M$ consists of all possible choices of regular triangulation for each ideal polyhedron, together with all possible ``bridge regions'' of layered flat tetrahedra joining the induced triangulations of each identified pair of polyhedron faces.
From the geometric structure of the cell decomposition, we obtain a natural semi-angle structure on each triangulation of $\calX_M^{\EP}$, which shows that they are all 1-efficient by Theorem \ref{semi-angle => 1-efficient}, and so the 3D index is defined for each triangulation by Theorems \ref{thm.00}(a) and  \ref{index_structure_iff_1-efficient}.  We show that any two of these triangulations are related to each other by a sequence of 2--3, 3--2, 0--2 and 2--0 moves through 1--efficient triangulations, the moves all preserving the 3D index, using Theorems \ref{thm.00}(b), \ref{index_structure_iff_1-efficient} and \ref{thm.20move.index}. (The intermediate triangulations are mostly also within $\calX_M^{\EP}$, although we sometimes have to venture outside of the set briefly.) Therefore we obtain a topological invariant of cusped hyperbolic 3--manifolds $M$.

\begin{thm}
\lbl{thm.2}
If $M$ is a cusped hyperbolic 3-manifold, and $\CT \in \calX_M^{\EP}$
we have $I_M:=I_{\CT}$ is well-defined. 
\end{thm}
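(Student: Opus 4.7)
The plan is to unpack ``well-defined'' into two separate assertions and attack each in turn: first, that $I_{\CT}$ exists for every single $\CT \in \calX_M^{\EP}$; and second, that $I_{\CT}$ does not depend on the choice of $\CT \in \calX_M^{\EP}$, so that the notation $I_M := I_{\CT}$ is unambiguous.

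For the existence half, I would exploit the natural semi-angle structure carried by each $\CT \in \calX_M^{\EP}$, which is read off from the hyperbolic geometry of the Epstein-Penner decomposition as previewed in Section \ref{sub.regular}: the tetrahedra triangulating a convex ideal polyhedron acquire honest positive dihedral angles from the Klein model, while the layered flat tetrahedra in a bridge region acquire the degenerate angles permitted by the semi-angle definition. Theorem~\ref{semi-angle => 1-efficient} then yields 1-efficiency of $\CT$, and combining Theorem~\ref{index_structure_iff_1-efficient} with Theorem~\ref{thm.00}(a) shows that $I_{\CT}$ is well-defined as a Laurent series.

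For the independence half, I would connect any two triangulations $\CT_1, \CT_2 \in \calX_M^{\EP}$ by a finite sequence of 2-3, 3-2, 0-2 and 2-0 moves with two properties: every intermediate triangulation is 1-efficient, and every move preserves the 3D index. I would proceed cell by cell. For a single Epstein-Penner polyhedron, the Gelfand-Kapranov-Zelevinsky theorem connects any two regular ideal triangulations by geometric bistellar flips (2-2, 2-3, 3-2). Interior 2-3 and 3-2 moves are handled directly by Theorem~\ref{thm.00}(b). A 2-2 move, which alters the induced triangulation on a polyhedron face, would be absorbed into the adjacent bridge region by inserting layered flat tetrahedra via 0-2 moves, executing 2-3 and 3-2 moves locally to propagate the face change through the bridge, and then eliminating surplus flat layers via 2-0 moves; invariance of the index under the 0-2/2-0 steps is supplied by Theorem~\ref{thm.20move.index}. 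Distinct choices of bridge region between matched faces likewise differ by 0-2 and 2-0 moves.

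The main obstacle is verifying the 1-efficiency hypothesis for every intermediate triangulation in these sequences, because the invariance theorems \ref{thm.00}(b) and \ref{thm.20move.index} require 1-efficiency at both ends of every single move. The cleanest way to manage this is to propagate a compatible semi-angle structure through each move: when we insert flat layers or perform a local 2-3 flip, we must simultaneously exhibit a semi-angle structure on the resulting triangulation, after which Theorem~\ref{semi-angle => 1-efficient} again certifies 1-efficiency. Because the intermediate triangulations may temporarily leave the canonical set $\calX_M^{\EP}$, this semi-angle bookkeeping is the real technical heart of the argument; once it is in place, chaining the invariances along the sequence gives $I_{\CT_1} = I_{\CT_2}$, completing the proof.
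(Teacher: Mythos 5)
Your overall strategy is the paper's strategy: read off a semi-angle structure from the Epstein--Penner geometry, deduce 1-efficiency via Theorem~\ref{semi-angle => 1-efficient} and existence of the index via Theorems~\ref{index_structure_iff_1-efficient} and~\ref{thm.00}(a), then connect any two members of $\calX_M^{\EP}$ by a chain of 2--3, 3--2, 0--2 and 2--0 moves through 1-efficient triangulations. The existence half and the GKZ step for the polyhedra are correct and match the paper, including the observation that the internal 2--2 flip briefly leaves $\calX_M^{\EP}$ but the intermediate triangulation still carries a natural semi-angle structure.

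There is, however, a genuine gap in your treatment of the polygonal pillows. You assert that ``distinct choices of bridge region between matched faces likewise differ by 0--2 and 2--0 moves,'' but this is false: two layered triangulations of the same polygonal pillow $Q$ bridging the same pair of boundary triangulations $(t_-,t_+)$ are given by two possibly quite different sequences of diagonal flips from $t_-$ to $t_+$, and these are not generally related by inserting or deleting cancelling pairs of flips alone. The paper handles this by observing that a layered triangulation of $Q$ is exactly a path in the $1$-skeleton of the associahedron of the polygon, and that the $2$-skeleton of the associahedron is simply connected with square $2$-cells (two commuting flips) and pentagon $2$-cells (the pentagon relation). Homotoping one path to the other across these $2$-cells yields a sequence of the listed moves: crossing a square is either a relabelling (re-ordering two non-overlapping flat tetrahedra, no move at all), a backtracking removal (a 2--0 move), or a combination of these; crossing a pentagon is a 3--2 move (possibly followed by 2--0 moves). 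Without the pentagon relation and hence without 3--2 moves in the pillow, you cannot get from one layered triangulation to another in general, so independence of $I_\CT$ on the pillow filling would be unproven. Once you introduce the associahedron argument and check that each crossing keeps you among 1-efficient triangulations (as the paper does via Remark~\ref{Xbar^EP 1-efficient}), the rest of your outline closes up.
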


The next theorem is of independent interest, and may be useful for the
problem of contructing topological invariants of cusped hyperbolic 3-manifolds.
For a definition of the gluing equations of an ideal triangulation, see \cite{NZ,Th} 
and also Section \ref{sub.ge} below.

\begin{thm}
\lbl{thm.3}
Fix a cusped hyperbolic 3-manifold $M$. 
\newline
\rm{(a)} For every $\CT \in \calX_M^{\EP}$, there exists a solution $Z_{\CT}$
to the gluing equations of $\CT$ which recovers the complete hyperbolic 
structure on $M$. Moreover, all shapes of $Z_{\CT}$ have non-negative 
imaginary part.
\newline
\rm{(b)} If $\CT, \CT' \in \calX_M^{\EP}$ are related by 2--3, 3--2, 0--2 and 
2--0 moves, then so are $Z_{\CT}$ and $Z_{\CT'}$.
\newline
\rm{(c)} For every $\CT$, the arguments of $Z_{\CT}$ give a semi-angle structure
on $\CT$.  
\end{thm}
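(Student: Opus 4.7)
The plan is to transport the complete hyperbolic structure on $M$, which is encoded geometrically by the Epstein--Penner cell decomposition, to each triangulation $\CT \in \calX_M^{\EP}$. Every tetrahedron $\Delta \in \CT$ is of one of two types: (i) a simplex in a regular ideal triangulation of some EP cell, whose four ideal vertices lie in general position on $\bdry \BH^3$ and which therefore inherits a cross-ratio shape $z_\Delta$ with $\mathrm{Im}(z_\Delta) > 0$; or (ii) a flat bridge tetrahedron whose four ideal vertices lie on a common totally geodesic plane, giving a real shape $z_\Delta \in \BR \setminus \{0,1\}$. Set $Z_\CT = (z_\Delta)_\Delta$.

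For part (a), I would argue that because the tetrahedra of $\CT$ literally assemble to give back the complete hyperbolic structure on $M$, the Neumann--Zagier edge and cusp equations are automatically satisfied: this is the standard fact that any refinement of a geometric ideal-polyhedral structure into ideal tetrahedra solves the gluing equations for that structure. The inequality $\mathrm{Im}(z_\Delta) \geq 0$ is then immediate from the type (i)/(ii) dichotomy.

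For part (c), the key observation is that whenever $\mathrm{Im}(z) \geq 0$, the arguments of the triple $(z,\, 1/(1-z),\, (z-1)/z)$ lie in $[0,\pi]$ and sum to $\pi$; these are precisely the three dihedral angles at pairs of opposite edges of the corresponding (possibly degenerate) ideal tetrahedron, so the vertex condition holds. The condition that the angles sum to $2\pi$ around each edge of $\CT$ is then exactly the imaginary part of the logarithmic edge equation from (a), and the hypothesis $\mathrm{Im}(z_\Delta) \geq 0$ ensures that the principal arguments add with no $2\pi$ ambiguity.

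For part (b), I would observe that every 2--3, 3--2, 0--2, or 2--0 move between elements of $\calX_M^{\EP}$ is realized geometrically: either inside a single EP cell, where two regular triangulations are connected by geometric bistellar flips (the GKZ theorem invoked in Section \ref{sub.regular}) and the induced shape transformation is the pentagon identity, or inside a bridge region where the flat tetrahedra rearrange in a way dictated by the two induced triangulations on the shared face (and the shape transformation preserves reality of the bridge shapes). The main obstacle I expect is handling the moves that the authors flag as briefly leaving $\calX_M^{\EP}$ in the paragraph preceding Theorem \ref{thm.2}: for these I would need to certify that the intermediate triangulations still admit a shape assignment with non-negative imaginary parts that recovers the complete structure, and that each intermediate move is itself geometric. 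Once this bookkeeping is settled, the algebraic transformations of the shapes follow automatically from the invariance of the underlying hyperbolic structure.
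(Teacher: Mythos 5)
Your proposal follows essentially the same route as the paper's (quite terse) proof: the same dichotomy between geometric tetrahedra subdividing the Epstein--Penner cells (shapes with positive imaginary part) and flat bridge tetrahedra (real non-degenerate shapes), with (a), (b), (c) all following ``by construction'' from the geometric realization, and your elaborations of (a) and (c) are consistent with this. One inaccuracy in your justification for (c): the hypothesis $\mathrm{Im}(z_\Delta)\ge 0$ alone does \emph{not} force the principal arguments to sum to exactly $2\pi$ around an edge --- for instance a degree-$8$ edge with every incident shape equal to $i$ has $\mathrm{Im}\ge 0$ everywhere and satisfies the exponential gluing equation, yet the argument sum is $4\pi$; what pins the sum to $2\pi$ is that the tetrahedra of $\CT$ literally tile a neighbourhood of the edge in $\BH^3$, which is again part of the geometric construction rather than a consequence of the sign condition on the imaginary parts.
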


\begin{rmk}
\lbl{rmk.5}
In \cite{HRS}, it is shown that a cusped hyperbolic 3--manifold $M$ admits an ideal triangulation with strict angle structure if $H_1(M, \partial M;\BZ_2)=0$. All link complements in the 3--sphere satisfy this condition. Such triangulations admit index structures but it is not known if they can be connected by 2--3 
and 0--2 moves within the class of 1--efficient triangulations. 
\end{rmk}

\begin{rmk}
\lbl{rmk.XEPM}
For a typical cusped hyperbolic manifold, one expects that the Epstein-Penner
ideal cell decomposition consists of ideal tetrahedra, i.e., that 
$\calX_M^{\EP}$ consists of one element. Many examples of such
cusped hyperbolic manifolds appear in the census \cite{SnapPy} and also
in \cite{Akiyoshi,GS}.
\end{rmk}

\begin{rmk}  
In a later paper we will extend this work in the following ways:
\begin{itemize}
\item  extend the domain of the 3D index $I_\CT([\varpi])$
to $[\varpi]\in H_1(\bd M; \frac{1}{2}\Z)$ such that
$2[\varpi] \in\mathrm{Ker}(H_1(\bd M; \Z) \to H_1(M; \Z/2\Z))$,
\item give a definition of the 3D index using singular normal surfaces in $M$.
\end{itemize}
\end{rmk}

\begin{rmk}
Theorem \ref{thm.2} constructs a family of $q$-series $I_M([\varpi])(q)$
(parametrized by $[\varpi] \in H_1(\pt M,\BZ)$) associated to a 
cusped hyperbolic manifold $M$. When $M = S^3 \setminus K$ is the complement of a knot $K$,
we can choose $[\varpi]=\mu$ to be the homology class of the meridian 
and consider the series
\begin{equation}
\lbl{eq.indtotal}
I^{\mathrm{tot}}_K(q)=\sum_{e \in \BZ} I_M(e \mu)(q)
\end{equation}
Since the semi-angle structures of Theorem \ref{thm.3} have zero holonomy
at all peripheral curves, it can be shown that $I^{\mathrm{tot}}_K(q)$ is 
well-defined. It turns out that $I^{\mathrm{tot}}_K(q)$ is closely related to
the state-integral invariants of Andersen-Kashaev and Kashaev-Luo-Vartanov
\cite{AK,KLV}. The relation between state-integrals of the quantum dilogarithm
and $q$-series is explained in detail in \cite{GK}. An empirical study
of the asymptotics of the series $I^{\mathrm{tot}}_{4_1}(q)$ is given in
\cite{GZ}.
\end{rmk}

\subsection{Plan of the paper}
\label{sub.plan}

In Section \ref{sec.definitions} we review the basic definitions of ideal
triangulations, efficiency, angle structures and index 
structures.

In Section \ref{sec: index structures and 1-efficiency} we prove Theorem 
\ref{index_structure_iff_1-efficient}. So for an ideal triangulation, existence of an index structure is equivalent to being
1-efficient. 

In Section \ref{sec.review} we review the basic properties of the tetrahedron
index from \cite{Ga:index}, and give a detailed discussion of the 3D index for an 
ideal triangulation of a cusped 3-manifold. In Section \ref{sec.02move} we study the
behaviour of the 3D index under the 0--2 and 2--0 move.

In Section \ref{sec.XEPM} we discuss the Epstein-Penner ideal cell 
decomposition and its subdivision into regular triangulations. At the end
of Section \ref{sub.moving.paths} we prove Theorems \ref{thm.2} and 
\ref{thm.3}.

In Section \ref{sec.computation} we compute the first terms of the 3D index for some example manifolds.

Finally in the appendix, we give a detailed and self-contained proof 
of the invariance of the 3D index of 1-efficient triangulations
under 2--3 moves, following \cite{Ga:index} and \cite{DGG2}. 


\section{Definitions}
\label{sec.definitions}

\begin{defn}
Let $M$ be an orientable topologically finite 3-manifold which is the interior of a compact 3-manifold with torus boundary components. An \emph{ideal triangulation} $\tri$ of $M$ consists of a pairwise disjoint union of standard Euclidean 3--simplices, $\widetilde{\Delta} = \cup_{k=1}^{n} \widetilde{\Delta}_k,$ together with a collection $\Phi$ of Euclidean isometries between the 2--simplices in $\widetilde{\Delta},$ called \emph{face pairings}, such that the quotient space 
$(\widetilde{\Delta} \setminus \widetilde{\Delta}^{(0)} )/ \Phi$ is homeomorphic to $M.$
The images of the simplices in $\tri$ may be singular in $M$.
\end{defn}

\begin{defn}
Let $\CT$ be an ideal triangulation with at least 2 distinct tetrahedra. A \emph{2--3 move} can be performed on any pair of distinct tetrahedra of $\CT$ that share a triangular face $t$. We remove $t$ and the two tetrahedra, and replace them with three tetrahedra arranged around a new edge, which has endpoints the two vertices not on $t$. See Figure \ref{2-3}. A \emph{3--2 move} is the reverse of a 2--3 move, and can be performed on any triangulation with a degree 3 edge, where the three tetrahedra incident to that edge are distinct.
\end{defn}

\begin{defn}
Let $\CT$ be an ideal triangulation. A \emph{0--2 move} can be performed on any pair of distinct triangular faces of $\CT$ that share an edge $e$\footnote{Unlike for the 2--3 move, it is possible to make sense of the 0--2 move when the two triangles are not distinct. However, we will not make use of this variant in this paper.}. Around the edge $e$, the tetrahedra of $\CT$ are arranged in a cyclic sequence, which we call a \emph{book of tetrahedra}. (Note that tetrahedra may appear more than once in the book.) The two triangles and $e$ separate the book into two \emph{half--books}. We unglue the tetrahedra that are identified across the two triangles, duplicating the triangles and also duplicating $e$. We glue into the resulting hole a pair of tetrahedra glued to each other in such a way that there is a degree 2 edge between them. See Figure \ref{0-2}. A \emph{2--0 move} is the reverse of a 0--2 move, and can be performed on any triangulation with a degree 2 edge, where the two tetrahedra incident to that edge are distinct, there are no face pairings between the four external faces of the two tetrahedra, and the two edges opposite the degree 2 edge are not identified. 
\end{defn}

\begin{figure}[htb]

\centering
\subfloat[The 2--3 and 3--2 moves.]{
\labellist
\small\hair 2pt
\pinlabel 2--3 at 139 84
\pinlabel 3--2 at 139 44
\endlabellist
\includegraphics[width=0.44\textwidth]{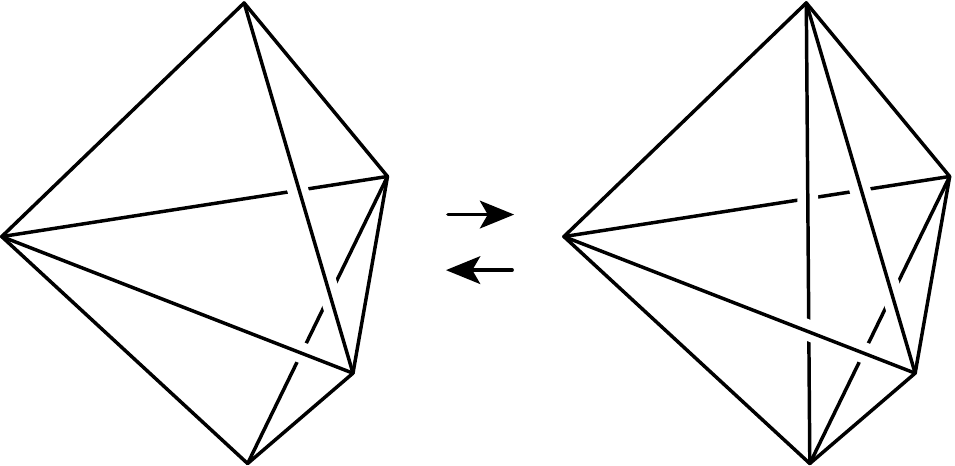}
\label{2-3}}
\qquad
\subfloat[The 0--2 and 2--0 moves.]{
\labellist
\small\hair 2pt
\pinlabel 0--2 at 109 84
\pinlabel 2--0 at 109 44
\endlabellist
\includegraphics[width=0.34\textwidth]{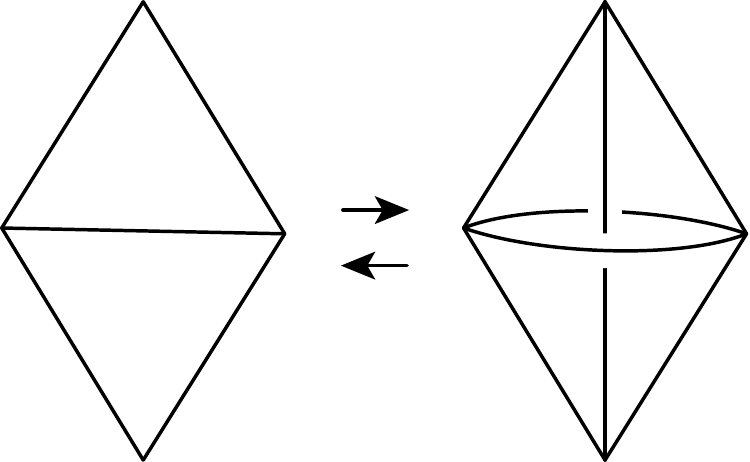}
\label{0-2}}
\caption{Moves on (topological) triangulations.}
\label{2-3 and 0-2 moves}
\end{figure}

\begin{rmk}
A 0--2 move is also called a {\em lune}
move in the dual language of standard spines \cite{Ma1,Ma2,Pi,BP}. In
\cite[Lem.2.1.11]{Ma1} and \cite{Pi} (see also 
\cite[Prop.I.1.13]{Petronio:thesis}) it was shown that a 0--2 move follows 
from a combination of 2--3 moves as long as the initial triangulation 
has at least $2$ ideal tetrahedra.
\end{rmk}

\begin{defn}
Let $\Delta^3$ be the standard 3--simplex with a chosen orientation. Each pair of opposite edges corresponds to a normal isotopy class of quadrilateral discs in $\Delta^3,$ disjoint from the pair of edges. We call such an isotopy class a \emph{normal quadrilateral type}. Each vertex of $\Delta^3$ corresponds to a normal isotopy class of triangular discs in $\Delta^3,$ disjoint from the face of $\Delta^3$ opposite the vertex. We call such an isotopy class a \emph{normal triangle type}. Let $\tri^{(k)}$ be the set of all $k$--simplices in $\tri$. If $\sigma \in \tri^{(3)},$ then there is an orientation preserving map $\Delta^3 \to \sigma$ taking the $k$--simplices in $\Delta^3$ to elements of $\tri^{(k)},$ and which is a bijection between the sets of normal quadrilateral and triangle types in $\Delta^3$ and in $\sigma$. 
Let $\square$ and $\triangle$ denote the sets of all normal quadrilateral and triangle types in $\tri$ respectively.
\end{defn}

\begin{defn}
Given a $3$-manifold $M$ with an ideal triangulation $\tri$,
the \emph{normal surface solution space}  $C(M; \tri)$ is a vector subspace of $\mathbb{R}^{7n},$ where $n$ is the number of tetrahedra in $\tri$, consisting of vectors  satisfying the \emph{compatibility equations} of normal surface theory. The coordinates of $x \in \mathbb{R}^{7n}$ represent weights of the four normal triangle types and the three normal quadrilateral types in each tetrahedron, and the compatibility equations state that normal triangles and quadrilaterals have to meet the 2--simplices of $\tri$ with compatible weights. 

A vector in $\mathbb{R}^{7n}$ is called \emph{admissible} if at most one quadrilateral coordinate from each tetrahedron is non-zero and all coordinates are non-negative. An integral admissible element of $C(M; \tri)$ corresponds to a unique embedded, closed \emph{normal surface} in $(M,\tri)$ and vice versa.
\end{defn}

\begin{defn} (See \cite{JR}, \cite{KR2})
An ideal triangulation $\tri$ of an orientable $3$-manifold is \emph{0-efficient} if there are no embedded normal 2-spheres or one-sided projective planes. An ideal triangulation $\tri$ is \emph{1-efficient} if it is 0-efficient, the only embedded normal tori are vertex-linking and there are no embedded one-sided normal Klein bottles. An ideal triangulation $\tri$ is \emph{strongly 1-efficient} if there are no immersed normal 2--spheres, projective planes or Klein bottles and the only immersed normal tori are coverings of the vertex-linking tori.
\end{defn}

Note that in some contexts, ``atoroidal'' is taken to mean that there is no immersed torus whose fundamental group injects into the fundamental group of the 3--manifold. In our context, we mean that there are no embedded incompressible tori or Klein bottles, other than tori isotopic to boundary components. In Corollary \ref{atoroidal=>1-efficient} and Remark \ref{small SFS lifted may not have index structure} we highlight this distinction.  

Note that if $M$ is orientable, it is sufficient to consider only normal 2-spheres and tori, except in the special case that $M$ is a twisted $I$-bundle over a Klein bottle. For any embedded normal projective plane or Klein bottle must be one-sided, so the boundary of a small regular neighbourhood is a normal 2-sphere or torus. However in the non-orientable case, one must consider two-sided projective planes and Klein bottles. In this paper we will consider only the orientable case. 

\begin{defn}
If $e\in \tri^{(1)}$ is any edge, then there is a sequence $(q_{n_1}, ..., q_{n_k})$ of normal quadrilateral types facing $e,$ which consists of all normal quadrilateral types dual to $e$ listed in sequence as one travels around $e.$ Then $k$ equals the degree of $e,$ and a normal quadrilateral type may appear at most twice in the sequence. This sequence is called the \emph{normal quadrilateral type sequence} for $e$ and is well-defined up to cyclic permutations and reversing the order. 
\end{defn}

\begin{defn}\label{angle structures}
A function $\alpha\co \square \to \mathbb{R}$ is called a \emph{generalised angle structure on $(M,\tri)$} if it satisfies the following two properties:
\begin{enumerate}
\item If $\sigma^3 \in \tri^{(3)}$ and $q, q', q''$ are the three normal quadrilateral types supported by it, then
\begin{equation*}
   \alpha(q) + \alpha(q') + \alpha(q'') =\pi.
\end{equation*}
\item If $e\in \tri^{(1)}$ is any edge and $(q_{n_1}, ..., q_{n_k})$ is its normal quadrilateral type sequence, then
\begin{equation*}
\sum_{i=1}^k \alpha(q_{n_i})  =2\pi.
\end{equation*}
\end{enumerate}
\end{defn}

Dually, one can regard $\alpha$ as assigning angles $\alpha(q)$ to the two edges opposite $q$ in the tetrahedron containing $q$. The triangulations we consider are of oriented manifolds, so we may assume that the triangulation is also oriented. We fix an ordering $q\rightarrow q' \rightarrow q'' \rightarrow q$ on these quad types, well defined up to cyclic permutation. See Figure \ref{quad_types}.\\

\begin{figure}[htbp]
\labellist
\small\hair 2pt
\pinlabel $q$ at 135 20
\pinlabel $q'$ at 430 20
\pinlabel $q''$ at 725 20
\endlabellist
\centering
\includegraphics[width=0.6\textwidth]{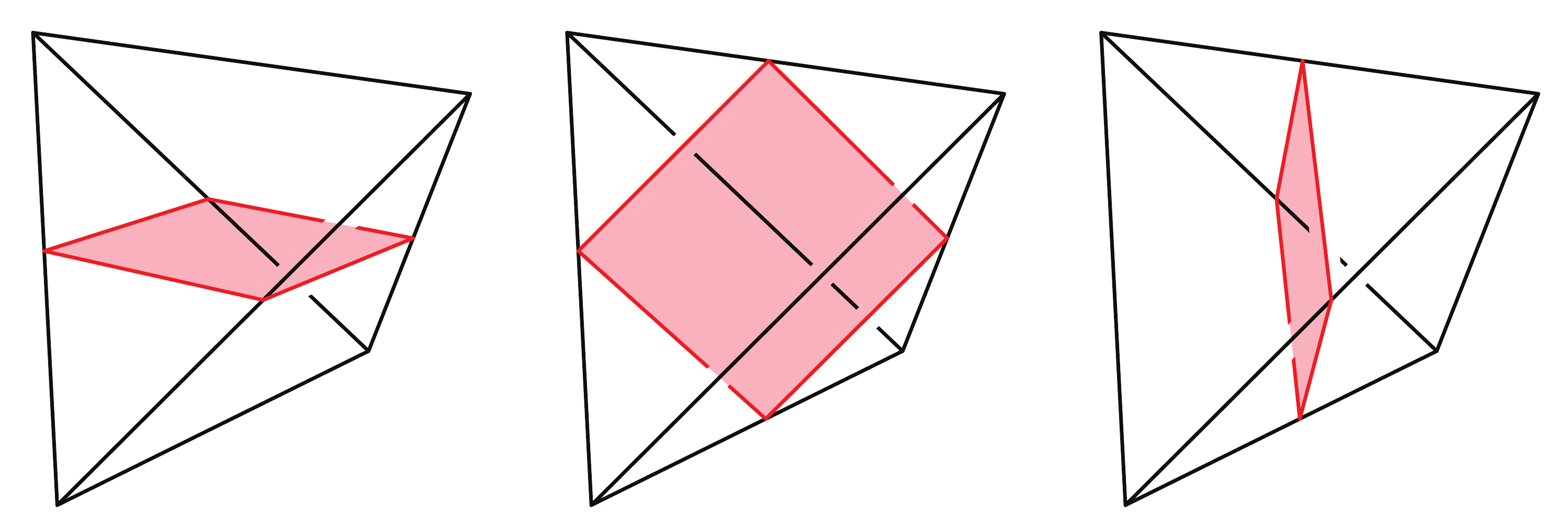}
\caption{The three quad types within an oriented tetrahedron, arranged in our chosen cyclic order.}
\label{quad_types}
\end{figure}

\begin{defn}
If we restrict the angles of a generalised angle structure to be in 
\begin{itemize}
\item $[0,\pi]$, then the generalised angle structure is a \emph{semi-angle structure}.
\item $(0,\pi)$, then the generalised angle structure is a \emph{strict angle structure}.  
\item $\{0,\pi\}$, then the generalised angle structure is a \emph{taut angle structure}.
\end{itemize}
The set of generalised angle structures is denoted by $\GA(\CT)$ and is an affine subspace of $\R^{3N}$, where $N$ is the number of tetrahedra in $\tri$. The subset of semi-angle structures is denoted by $\SA(\CT)$, and is a closed 
polytope in $\GA(\CT)$.
\end{defn}

\begin{rmk}
It is easy to see that a taut angle structure can only happen if every tetrahedron has a pair of opposite edges with angles $\pi$ and the other four edges have angles $0$.
\end{rmk}

\begin{defn}\label{choice_of_quads}
For an ideal triangulation $\tri$ with $N$ tetrahedra, a \emph{quad-choice} is an element 
$Q=(Q_1, \ldots, Q_N) \in \square^N$ such that $Q_n$ is a choice of one of the three quad types in the $n$th tetrahedron. An \emph{index structure} $\alpha$ on $\tri$ consists of $3^N$ generalised angle structures, indexed by the quad-choices $Q$, with the property that $\alpha_Q(Q_n)>0$ for $n=1,\ldots,N$, for each quad-choice $Q.$
\end{defn}

\begin{defn}\label{matrix equations}
The equations determining a generalised angle structure can be read off as three $N\times N$ matrices 
$\overline{\mb A}=(\bar a_{ij})$, $\overline{\mb B}=(\bar b_{ij})$ and $\overline{\mb C}=(\bar c_{ij})$
whose rows are indexed by the $N$ edges of $\CT$ and whose columns are 
indexed by the $\alpha(q_j) ,\alpha(q_j'),\alpha(q_j'')$ variables respectively, where $q_j, q_j', q_j''$ are the quads type in the $j$th tetrahedron.   These are the so-called 
{\em Neumann-Zagier matrices} that encode the exponents of the 
{\em gluing equations} of $\CT$, originally introduced by Thurston 
\cite{NZ,Th}. In terms of these matrices, a generalised angle structure is
a triple of vectors $Z, Z',  Z''  \in \BR^N$ that satisfy the equations
\be
\lbl{eq.angle1}
\overline{\mb A} \, Z + \overline{\mb B} \, Z' 
+ \overline{\mb C} \, Z'' = 2 \pi (1,\dots,1)^T,  \qquad   Z+Z'+Z''=\pi (1,\dots,1)^T \,.
\ee
Note that the matrix entries $\bar a_{ij},\bar b_{ij},\bar c_{ij}$ give the coefficients
of $Z_j,Z'_j,Z''_j$ in the $i$th edge equation corresponding to the edges of tetrahedron $j$
facing quad types $q_j,q'_j,q''_j$ respectively.

We can combine these into a single matrix equation
\be
\label{full_matrix_eqn}
\begin{pmatrix} \overline{\mb A} & \overline{\mb B} & \overline{\mb C} \\ 
\mb I_N & \mb I_N & \mb I_N 
\end{pmatrix} \begin{pmatrix}Z \\ 
Z'\\
Z'' 
\end{pmatrix} = 
\begin{pmatrix} 2 \pi (1,\dots,1)^T \\ \pi (1,\dots,1)^T \end{pmatrix},
\ee
where $\mb I_N$ is the $N\times N$ identity matrix. We call this matrix equation \emph{the matrix form of the generalised angle structure equations}.

\end{defn}


\section{Index structures and 1--efficiency}
\label{sec: index structures and 1-efficiency}

We first give a sketch proof of Theorem \ref{semi-angle => 1-efficient}, showing that a semi--angle structure implies 1--efficiency. We follow \cite{KR2} and indicate the required small modification. Suppose that
$M$ is oriented with cusps and has an ideal triangulation $\tri$ with a semi-angle structure. Assume that there is an embedded normal torus or Klein bottle or sphere or projective plane, where the normal torus is not a peripheral torus.  Firstly, exactly as in \cite{Lackenby} the latter two cases are excluded by a simple Euler characteristic argument. 
Similarly, if there is a cube with knotted hole bounded by an embedded normal torus, then a barrier argument as in \cite{JR} establishes that there is a normal 2-sphere bounding a ball containing this normal torus, which is a contradiction. Embedded Klein bottles are excluded, so we are reduced to the cases of an embedded essential non peripheral normal torus or a normal torus bounding a solid torus. 

In both cases, there is a sweepout between the normal torus and a peripheral normal torus (for essential tori) or to a core circle of the solid torus. 
By a minimax argument (see \cite{Rub}, \cite{Sto}), there is an almost normal torus associated with this sweepout. This is either obtained by attaching a tube parallel to an edge to a normal 2-sphere or has a single properly embedded octagonal disc in a tetrahedron and a collection of normal triangular and quadrilateral discs. 
The first case is excluded, since we have ruled out such normal 2-spheres. 

The semi-angle structure now implies that a standard combinatorial Gauss-Bonnet argument can be applied. Each polygonal disc in our torus has curvature given by $\Sigma_i \alpha_i  - (n-2)\pi$, where $n$ is the number of edges of the disc and $\alpha_i$ are the interior angles at the vertices of the disc. Gauss-Bonnet then says that the sum of the curvatures of all the discs is zero, since the Euler characteristic of the torus is zero. Every normal triangular disc contributes zero and each normal quadrilateral is non-positive in the curvature sum. On the other hand, any embedding of an octagon into an ideal tetrahedron with a semi-angle structure gives a strictly negative contribution. See Figure \ref{normal_octagon}. Hence the Euler characteristic of such a surface cannot be zero and there could not have been an embedded normal torus to begin with. This completes the sketch proof. 
\qed

\begin{figure}[htbp]
\centering

\labellist
\small\hair 2pt
\pinlabel $\alpha$ at 174 335
\pinlabel $\alpha$ at 249 326
\pinlabel $\alpha$ at 141 76
\pinlabel $\alpha$ at 280 89
\pinlabel $\gamma$ at 84 200
\pinlabel $\gamma$ at 350 184
\pinlabel $\beta$ at 241 212
\pinlabel $\beta$ at 170 152
\endlabellist

\includegraphics[width=0.5\textwidth]{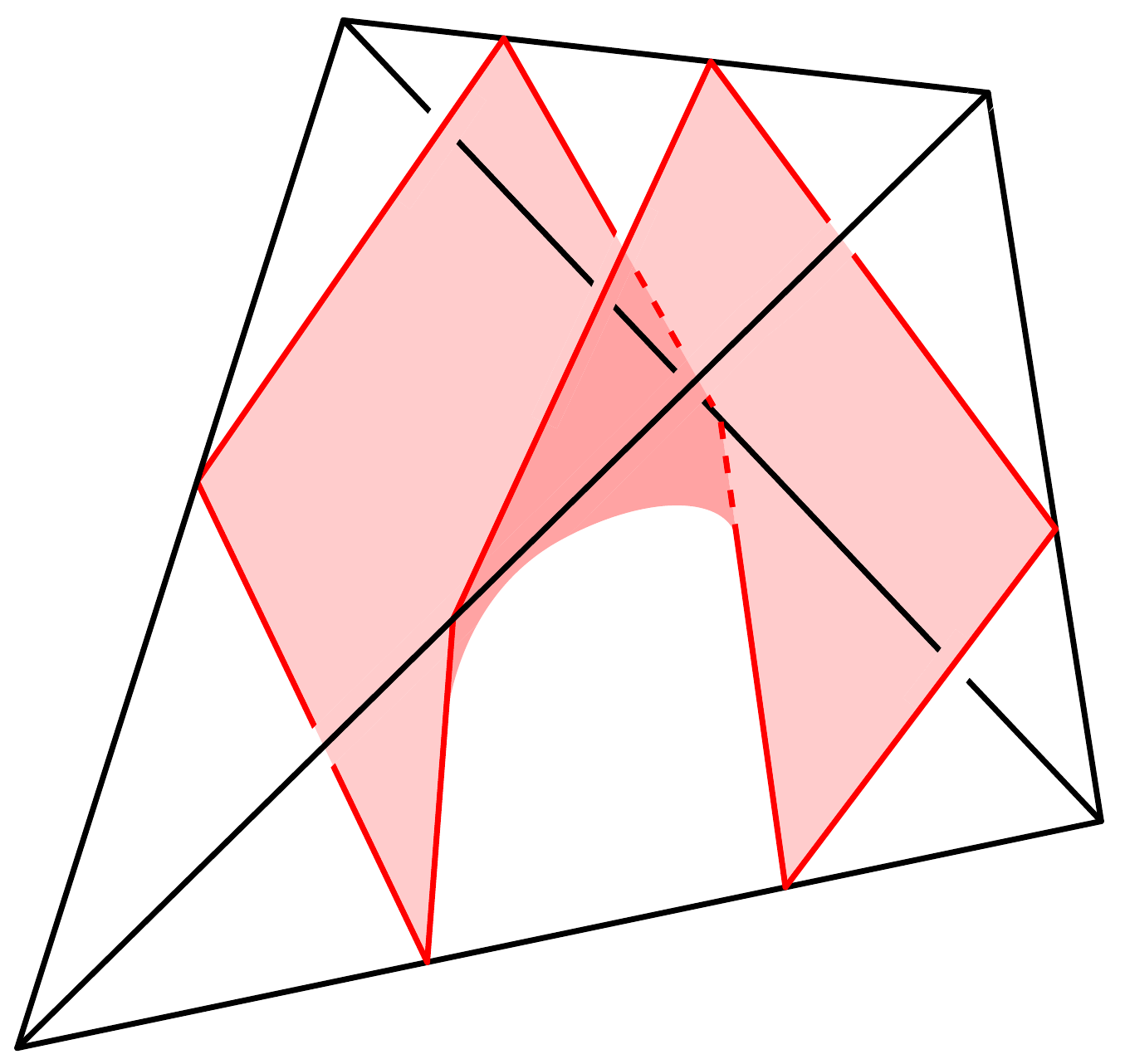}
\caption{A normal octagon in a tetrahedron with a semi-angle structure with angles $\alpha, \beta, \gamma \in [0,\pi]$. The curvature of this octagon is $4\alpha+2\beta+2\gamma - (8-2)\pi = 2\alpha + 2\pi - 6\pi = 2\alpha - 4\pi < 0.$}
\label{normal_octagon}
\end{figure}

A useful observation (see \cite{KR2}) following from Theorem \ref{semi-angle => 1-efficient} is the following;

\begin{cor}\label{strongly}

Suppose that $\CT$ is an ideal triangulation of an oriented $3$-manifold $M$ with cusps. If $M$ is an-annular and $\CT$ admits a semi-angle structure then $M$ is strongly 1-efficient. 

\end{cor}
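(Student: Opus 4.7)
The strategy is to extend the combinatorial Gauss--Bonnet calculation used in the sketch proof of Theorem \ref{semi-angle => 1-efficient} from embedded to immersed normal surfaces, and then to invoke the an-annular hypothesis to handle the borderline case $\chi=0$.

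The curvature of each normal disc piece depends only on the semi-angle structure of its ambient tetrahedron, not on whether the surface is embedded or merely immersed. A normal triangle has corner angles summing to $\alpha+\beta+\gamma=\pi$, contributing zero curvature. A normal quadrilateral $Q$ opposite to the pair of edges with semi-angle $\alpha$ has corner angles $\beta,\beta,\gamma,\gamma$ and therefore curvature $2(\beta+\gamma)-2\pi=-2\alpha\le 0$. Summing over all disc pieces of any closed immersed normal surface $F\to M$ yields
\[
2\pi\chi(F) \;=\; -2\sum_{Q\subset F}\alpha_Q \;\le\; 0,
\]
immediately ruling out immersed normal $2$-spheres and projective planes. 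An immersed normal Klein bottle is handled by passing to its orientation double cover, which is an immersed normal torus, so it remains only to show that every immersed normal torus is a covering of a vertex-linking torus of $\CT$.

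Let $f\colon T\to M$ be an immersed normal torus. The identity above forces $\alpha_Q=0$ for every quadrilateral $Q$ in $T$. If $T$ contains no quadrilaterals, then its image lies in the union of the vertex-linking tori of $\CT$; since $T$ is connected and closed, $f$ factors as a covering $T\to T_v$ onto a single vertex-linking torus $T_v$ followed by the inclusion $T_v\hookrightarrow M$, which is the desired conclusion.

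The main obstacle is to exclude the case in which $T$ contains at least one quadrilateral, and this is where the an-annular hypothesis enters. By Theorem \ref{semi-angle => 1-efficient} and Remark \ref{rem.1}, $M$ is irreducible and atoroidal; together with an-annularity, this forces every $\pi_1$-injective immersion of a torus into $M$ to be homotopic into a cusp neighbourhood, so its normal representative consists only of triangles, contradicting the presence of a quadrilateral in $T$. If instead $f$ is not $\pi_1$-injective, a tower/loop-theorem argument produces a compressing disc for $T$, and surgering $T$ along it yields an immersed $2$-sphere which, after a standard normalisation procedure, becomes an immersed normal $2$-sphere, already excluded above. Either way we reach a contradiction, completing the proof that $\CT$ is strongly $1$-efficient.
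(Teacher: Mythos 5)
Your direct application of combinatorial Gauss--Bonnet to immersed normal surfaces is sound: curvature is a local quantity, so the computation $2\pi\chi(F)=-2\sum_Q\alpha_Q\le 0$ holds verbatim for immersed $F$, and it correctly eliminates immersed normal spheres and projective planes (and, via the orientation double cover, Klein bottles). The no-quad torus case is also handled acceptably. However, the argument breaks exactly where the real difficulty lies: an immersed normal torus $T$ containing at least one quadrilateral, all of whose quadrilateral angles vanish. You claim that if $T$ is $\pi_1$-injective then it is homotopic into a cusp ``so its normal representative consists only of triangles''. But $T$ \emph{is} the normal surface in question, and being homotopic as a map to a surface in the cusp places no constraint on $T$'s own normal disc types; two normal surfaces in the same homotopy class can have entirely different quadrilateral content. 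The issue is precisely to rule out immersed normal tori with zero-angle quads, and asserting this is circular. In the non-$\pi_1$-injective branch, surgering along an (immersed) compressing disc produces a sphere that, since $M$ is irreducible, bounds a ball and simply normalises away, so no contradiction results. Note also that even for \emph{embedded} non-peripheral normal tori, the proof of Theorem \ref{semi-angle => 1-efficient} cannot derive a contradiction from Gauss--Bonnet on the normal torus alone (which gives only $\chi\le 0$); it needs the minimax sweepout to find an \emph{almost-normal} torus whose octagon contributes strictly negative curvature.

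The paper's proof supplies the missing tool: it passes to the covering space $\wt M$ whose fundamental group is $\mathrm{im}(\pi_1(T))$, lifts both the triangulation and the semi-angle structure, and uses the lifted torus $\wt T$ as a \emph{barrier} (in the sense of Jaco--Rubinstein) to produce an \emph{embedded} normal non-peripheral torus $T^*$ in $\wt M$. The an-annularity of $M$ is what guarantees that $\wt M$ is atoroidal, and one then runs the sweepout/almost-normal-octagon argument of Theorem \ref{semi-angle => 1-efficient} inside $\wt M$ to reach the contradiction. Without the barrier construction converting the immersed torus into an embedded one, the Gauss--Bonnet inequality alone is not enough, and your two-branch case analysis does not close the gap.
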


\begin{proof}

The key observation is that the semi-angle structure on $\CT$ lifts to a semi-angle structure on the lifted triangulation $\wt \CT$, for any covering space $\wt M$ of $M$. Assume that there is an immersed normal torus $T$ in $M$ which is not a covering of the peripheral torus. If $\wt M$ is chosen as the covering space whose fundamental group corresponds to the image of $\pi_1(T)$, then $T$ lifts to a normal torus $\wt T$ so that the inclusion map induces an onto map $\pi_1(\wt T) \rightarrow \pi_1(\wt M)$. 

We can now use $\wt T$ as a barrier (see \cite{JR}) to produce an embedded normal non-peripheral torus $T^*$, which is either essential and isotopic into a boundary cusp, or bounds a solid torus or cube with knotted hole. (Here the an-annular assumption is used to show that the covering space $\wt M$ is atoroidal). The rest of the argument is exactly the same as in Theorem \ref{semi-angle => 1-efficient}.

\end{proof}

\begin{proof}[Proof of Theorem \ref{index_structure_iff_1-efficient}]
We closely follow Luo--Tillmann \cite{LT}. We use the following version of Farkas' lemma, which is given as Lemma 10 (3) in \cite{LT}:

\begin{lemma}
Let $A$ be a real $K \times L$ matrix, $b\in\R^K$, and $\cdot$ denote the usual Euclidean inner product on $\R^K$. Then $\{x\in\R^L \mid Ax=b, x>0\} \neq \emptyset$ if and only if for all $y\in\R^K$ such that $A^Ty\neq 0$ and $A^Ty\leq 0,$ one has $y\cdot b<0.$
\end{lemma}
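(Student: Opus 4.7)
The plan is to separate the two directions: necessity is a direct calculation, while sufficiency reduces to linear programming duality, with the residual case handled by Stiemke's theorem of the alternative.

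For \emph{necessity}, suppose $x \in \R^L$ satisfies $x > 0$ and $Ax = b$. For any $y \in \R^K$ we have $y \cdot b = y^T(Ax) = (A^T y) \cdot x$. If $A^T y \le 0$ and $A^T y \ne 0$, then $A^T y$ has no positive entries and at least one strictly negative entry; paired against the strictly positive $x$, this forces $(A^T y) \cdot x < 0$, as required.

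For \emph{sufficiency}, I would rephrase the statement ``$\exists x > 0$ with $Ax = b$'' as the strict positivity of the optimal value of the auxiliary linear program
$$p^{*} \;=\; \max\{\, t \,:\, Ax = b,\; x - t\mathbf{1} \ge 0,\; (x,t) \in \R^L \times \R\,\}.$$
A routine computation gives the LP dual
$$d^{*} \;=\; \min\{\, y \cdot b \,:\, A^T y \ge 0,\; \mathbf{1}^T A^T y = 1,\; y \in \R^K\,\}.$$
Applying the lemma's hypothesis after the substitution $y \mapsto -y$, every $y$ with $A^T y \ge 0$ and $A^T y \ne 0$ satisfies $y \cdot b > 0$; in particular every dual feasible $y$ does. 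Since $A^T y \ge 0$ combined with $\mathbf{1}^T A^T y = 1$ confines $A^T y$ to a bounded simplex in $\R^L$, the dual feasible set is compact modulo $\ker A^T$, so the infimum $d^{*}$ is attained. Strong LP duality then gives $p^{*} = d^{*} > 0$, which delivers the desired strict solution.

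Finally, the plan must deal with the degenerate case where the dual LP above is infeasible. By Stiemke's theorem, infeasibility of the dual is equivalent to the existence of $z \in \R^L$ with $z > 0$ and $Az = 0$, and simultaneously to the \emph{non}existence of any $y$ with $A^T y \le 0$ and $A^T y \ne 0$; so the hypothesis of the lemma is vacuously true. In that case, starting from any real solution $x_0$ of $Ax = b$ (which is present in the setting of the paper, as the right-hand side of \eqref{full_matrix_eqn} always lies in $\mathrm{image}(A)$), the vector $x_0 + \lambda z$ is strictly positive for $\lambda$ large. The main obstacle in the whole argument is isolating the strict inequality $d^{*} > 0$ rather than $d^{*} \ge 0$; this is precisely what the compactness of the normalized dual feasible set provides, and once it is in hand the rest is textbook LP duality.
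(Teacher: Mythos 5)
The paper does not actually prove this lemma: it is quoted verbatim from Luo--Tillmann \cite{LT} (their Lemma 10(3)), so your argument is supplying a proof where the text only gives a citation. Your overall strategy is sound. The necessity computation $y\cdot b=(A^Ty)\cdot x<0$ is correct, and the sufficiency argument via the auxiliary program $\max\{t: Ax=b,\ x\ge t\mathbf{1}\}$, its dual $\min\{y\cdot b: A^Ty\ge 0,\ \mathbf{1}^TA^Ty=1\}$, and Stiemke's alternative for the dual-infeasible case is a legitimate and essentially complete route.

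Two points deserve attention. First, you are right that the statement as literally printed fails in the residual case: if $Az=0$ has a strictly positive solution and $b\notin\mathrm{image}(A)$, the right-hand condition is vacuously true while the left-hand set is empty (e.g.\ $A=\left(\begin{smallmatrix}1&-1\\1&-1\end{smallmatrix}\right)$, $b=(1,0)^T$). Your repair --- invoking real solvability of $Ax=b$, which in the paper's application is the existence of generalised angle structures, itself a nontrivial theorem of Luo--Tillmann --- is the correct one, but it should be promoted from a parenthetical to an explicit standing hypothesis, since without it the ``lemma'' is false. Second, in the step ``the dual feasible set is compact modulo $\ker A^T$, so the infimum $d^{*}$ is attained,'' the objective $y\mapsto y\cdot b$ descends to the quotient by $\ker A^T$ only when $b\perp\ker A^T$, i.e.\ $b\in\mathrm{image}(A)$; otherwise $d^{*}=-\infty$ and nothing is attained. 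This is harmless here --- if the dual is feasible and $b\notin\mathrm{image}(A)$, adding large multiples of a suitable $y_0\in\ker A^T$ to a dual-feasible point already violates the lemma's hypothesis, so that case cannot occur --- but the justification is the same issue as your residual case and should be written down, since it is precisely where the strict inequality $d^{*}>0$ is won.
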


For our purposes, $Ax=b$ is the matrix form \eqref{full_matrix_eqn} of the generalised angle structure equations, so $b=(2\pi, \ldots, 2\pi, \pi, \ldots, \pi)^T$. Consider a particular quad-choice $Q$, as in Definition \ref{choice_of_quads}. If there is to be an index structure, then we must be able to find the appropriate generalised angle structure $x$. That is, $x_l>0$ if $l$ corresponds to one of the $Q_n$, and $x_l$ can have any real value if not. We refer to the former as \emph{restricted} variables, and the latter as \emph{unrestricted} variables.

The problem with applying Farkas' lemma directly is that it applies to the set of solutions $\{Ax=b \mid x>0\}$. That is, \emph{all} variables are strictly positive. However, we use a standard trick: for each unrestricted variable $x_l$, introduce a new variable $x_l'$. The
new variable acts precisely like $-x_l,$ so the old $x_l$ can be written in the new coordinates as $x_l - x_l'$. 
This allows both new variables $x_l, x_l' > 0$, making them restricted variables, so that Farkas' lemma can be applied.

The effect that this has on the matrix $A$ is as follows: We get a new column after each unrestricted $x_l$ for $x_l'$,
and the values in the new column are the negatives of the values in the column for $x_l$.

Now we apply Farkas' lemma. We get a solution to our system if and only if for all $y\in\R^K$ such that $A^Ty\neq 0$ and $A^Ty\leq 0,$ we have $y\cdot b<0.$ The transposed matrix $A^T$ has dual variables $(z_1, ..., z_n, w_1, ..., w_t)$, where the $w_i$
correspond to the tetrahedra and the $z_j$ correspond to the edges. The dual system $A^T(z,w)^T \leq 0$ is given by inequalities:
$$w_i + z_j + z_k \leq 0$$
whenever the $i$th tetrahedron contains a quad that faces the edges $j$ and $k$ (which may not be distinct). This holds for all the rows corresponding to the $x_l$, and we get the following for the $x_l'$:
$$-(w_i + z_j + z_k) \leq 0$$
The two of these together imply that $w_i + z_j + z_k = 0$ for the quads
corresponding to unrestricted angles, while $w_i + z_j + z_k \leq 0$ for restricted angles. 
The rest of the argument is the same as in \cite{LT}, as follows.

Kang and Rubinstein \cite{KR1} give a basis of the normal surface solution space  $C(M; \tri)$ which consists of one element for each edge and one element for each tetrahedron of $\CT$. For the edge $e$, the corresponding basis element has each of the quad types in the normal quadrilateral type sequence for $e$ with coefficient $-1$ (or $-2$ if that quad appears twice), and each of the triangle disc types that intersect $e$ with coefficient $+1$. For each tetrahedron $\sigma$, the corresponding basis element has each of the quad types in $\sigma$ with coefficient $-1$, and each of the triangle disc types in $\sigma$ with coefficient $+1$.

If we have a solution to the dual system, then we can form a normal surface solution class $W_{w,z}$ as a sum of tetrahedral and edge basis elements with coefficients given by the $w_i$ and $z_j$ corresponding to their tetrahedra and edges respectively. There is a linear functional $\chi^*$ on $\mathbb{R}^{7n}$ called the \emph{generalised Euler characteristic}, which agrees with the Euler characteristic in the case of an embedded normal surface represented by an element of $C(M; \tri)$. It is shown in \cite{LT} that the generalised Euler characteristic  $\chi^*(W_{w,z})$ is equal to $y\cdot b$, and that the normal quad coordinates of $W_{w,z}$ are given by $-(w_i + z_j + z_k)$. From the above inequalities, we find that the obstruction classes are solutions to the normal surface matching equations with zero quad coordinates for unrestricted angles, non-negative quad coordinates for restricted angles (i.e. the quads specified by the quad-choice $Q$), at least one quad coordinate strictly positive, and generalised Euler characteristic $\chi^*(W_{w,z}) \geq 0$. 

If there are any negative triangle coordinates, we can add vertex linking copies of the boundary tori to the solution until all normal disc coordinates are non-negative. Now, since at most one quad coordinate in each tetrahedron is non-zero, we can in fact realise the normal surface solution class as an embedded normal surface, and so the generalised Euler characteristic is equal to the Euler characteristic. Therefore, an obstruction class to this quad-choice having an associated generalised angle structure is an embedded normal sphere, projective plane, Klein bottle or torus, with the only quads appearing being of the quad types given by the quad-choice. Thus, if the triangulation is 1-efficient, then there can be no such obstruction.

The above argument shows that a 1-efficient triangulation admits an index structure. For the converse, note that if a triangulation is not 1-efficient, then there is an embedded normal sphere, projective plane, Klein bottle or non-vertex linking torus. This must then have at least one non-zero quad coordinate, and since it is embedded, there can be only one non-zero quad coordinate in each tetrahedron. Choosing these quad types in the tetrahedra containing the surface, and arbitrarily choosing quad types in any other tetrahedra, we construct a quad-choice that by the above argument cannot have a suitable generalised angle structure, and so there is no index structure. This completes the proof of Theorem \ref{index_structure_iff_1-efficient}. \end{proof}

\begin{cor}\label{atoroidal=>1-efficient}
Suppose that $M$ is a compact  oriented irreducible 3-manifold with incompressible tori boundary components and no immersed incompressible tori or Klein bottles, except those which are homotopic into the boundary tori. Then $M$ admits an ideal triangulation $\tri$ having an index structure. Moreover if $M$ has no essential annuli (i.e $M$ is an-annular) then for any finite sheeted covering space $\widetilde M$, the lifted triangulation also admits an index structure.  
\end{cor}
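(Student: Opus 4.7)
The plan is to derive this as a direct synthesis of Lackenby's taut ideal triangulation theorem with the two main results already established in the paper: Theorem \ref{semi-angle => 1-efficient} (semi-angle $\Rightarrow$ 1-efficient, given atoroidality) and Theorem \ref{index_structure_iff_1-efficient} (1-efficient $\Leftrightarrow$ index structure). Recall that Lackenby constructs an ideal triangulation with a taut angle structure on every irreducible an-annular cusped 3-manifold, and taut angle structures have all angles in $\{0,\pi\}\subset[0,\pi]$, so they are in particular semi-angle structures.

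First I would check that the stated hypotheses let us invoke Lackenby's construction. Irreducibility is given. To get an-annularity, suppose there were an essential annulus $A$ in $M$. Since $\partial M$ is a union of tori and $\partial A$ consists of two essential simple closed curves on $\partial M$, the boundary curves of $A$ in the same boundary torus $T$ cobound two sub-annuli $A_1,A_2\subset T$; at least one of the immersed tori or Klein bottles $A\cup A_i$ is neither compressible (by irreducibility) nor boundary-parallel (otherwise $A$ itself would be boundary-parallel). This produces an immersed incompressible torus or Klein bottle not homotopic into $\partial M$, contradicting the hypothesis. Hence $M$ is an-annular, and Lackenby's construction provides an ideal triangulation $\tri$ of $M$ carrying a taut angle structure. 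Since taut $\Rightarrow$ semi-angle, and the immersed-atoroidal hypothesis on $M$ in particular gives embedded atoroidality, Theorem \ref{semi-angle => 1-efficient} says $\tri$ is 1-efficient, and then Theorem \ref{index_structure_iff_1-efficient} produces an index structure on $\tri$.

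For the ``moreover'' part, fix a finite sheeted cover $p\co\widetilde M\to M$ and let $\widetilde\tri$ denote the lifted ideal triangulation. The taut angle structure on $\tri$ pulls back tetrahedron-by-tetrahedron to a taut (hence semi-) angle structure on $\widetilde\tri$. To invoke Theorem \ref{semi-angle => 1-efficient} upstairs we must know $\widetilde M$ is atoroidal. This is exactly the argument used in the proof of Corollary \ref{strongly}: if $\widetilde M$ contained an essential non-peripheral embedded torus, then by a barrier/innermost-curve argument the image in $M$ would produce either an essential immersed torus or an essential annulus, and by the an-annular hypothesis on $M$ both possibilities are excluded. Hence $\widetilde M$ is atoroidal, Theorem \ref{semi-angle => 1-efficient} gives 1-efficiency of $\widetilde\tri$, and Theorem \ref{index_structure_iff_1-efficient} gives an index structure on $\widetilde\tri$.

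The substantive obstacle is not algebraic but topological: confirming that the immersed-atoroidal hypothesis of the corollary really does force $M$ to be an-annular so that Lackenby applies, and separately, confirming that the an-annular hypothesis propagates to atoroidality of every finite cover in the moreover clause. Both are standard cut-and-paste/barrier arguments with essential annuli and tori; once they are in place, the theorem follows by stitching together Lackenby's construction with Theorems \ref{semi-angle => 1-efficient} and \ref{index_structure_iff_1-efficient} as described above.
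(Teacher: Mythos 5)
Your proposal diverges from the paper's proof at the decisive step: you try to show that the hypothesis alone (no non-peripheral immersed incompressible tori or Klein bottles) forces $M$ to be an-annular, so that Lackenby's theorem applies to every $M$ satisfying the hypotheses. The paper does not do this. Instead it carries out a case analysis: when there are no non-peripheral immersed essential annuli, Lackenby gives a taut ideal triangulation and Corollary~\ref{strongly} gives strong 1-efficiency; in the remaining case the paper observes that $M$ must be a small Seifert fibred space, shows that $M$ fibres over $S^1$ with fibre a punctured surface of negative Euler characteristic, and directly constructs a layered taut ideal triangulation following Lemma~6 of \cite{Lackenby2}. Remark~\ref{small SFS lifted may not have index structure} makes clear that these small Seifert fibred spaces are intended to be covered by the first conclusion of the corollary (and to illustrate failure of the ``moreover'' clause), so omitting them from the argument is a real gap.

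Your cut-and-paste derivation of an-annularity also does not go through as written. It assumes both boundary circles of the essential annulus $A$ lie on the same boundary torus, which need not hold when $\partial M$ has several components. Even granting that, the parenthetical justifications are insufficient: irreducibility does not prevent an embedded torus from being compressible (it may bound a solid torus or lie in a cube with knotted hole), and ``not boundary-parallel'' (not isotopic into $\partial M$) is strictly weaker than ``not homotopic into $\partial M$'', which is what the hypothesis would need you to contradict. A correct version of your shortcut would go through Geometrization, as already invoked in Remark~\ref{rem.1}: an irreducible, embedded-atoroidal $M$ admitting an essential annulus is a small Seifert fibred space, and one would then have to exhibit a non-peripheral immersed incompressible torus or Klein bottle in each such space (a vertical torus over a non-peripheral geodesic in the hyperbolic base orbifold case, the core Klein bottle in the twisted $I$-bundle case). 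That is the content your proposal is missing; as written, the key implication is asserted rather than proved.
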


\begin{proof}
To construct 1-efficient triangulations, we can use a construction of Lackenby \cite{Lackenby2}. 
He proves that if $M$ is a compact oriented irreducible 3-manifold with incompressible tori boundary components and $M$ has no immersed essential annuli, except those homotopic into the boundary tori, then $M$ admits a taut ideal triangulation $\tri$.
Then by Corollary \ref{strongly}
such triangulations are strongly 1-efficient.  Note that the lift of such a triangulation to any finite sheeted covering space is also 1-efficient. \\

There is a remaining case of small Seifert fibred spaces. For these are precisely the oriented $3$-manifolds with tori boundary components which admit essential annuli, but no embedded incompressible tori which are not homotopic into the boundary components. 

Such examples have 
base orbifold either a disc with two cone points or an annulus with one cone point or  M\"obius band with no cone points. The cone points are the images of the exceptional fibres in the Seifert structure. These manifolds have immersed incompressible tori, but do not have embedded incompressible tori or Klein bottles, except in the case where the base orbifold has orbifold Euler characteristic zero: a disc with two cone points corresponding to exceptional fibres of multiplicity two or orbit surface a M\"obius band with no cone points. This represents two different Seifert fibrations of the same manifold. We exclude this latter case. 

Now to construct a suitable ideal triangulation, note that these Seifert fibred spaces $M$ are bundles over a circle with a punctured surface of negative Euler characteristic as the fibre. To see this, note that $M$ is Seifert fibred over an orientable base orbifold $B$ with $\chi^{orb}(B)<0$. Then $M$ admits a connected horizontal surface $F$ which is orientable with $\chi(F)<0$ since $F$ is an orbifold covering of $B$. (A surface is \emph{horizontal} if it is everywhere transverse to the Seifert fibration.) Since $M$ is orientable it follows that $F$ non-separating, so $M$ fibres over the circle with $F$ as fibre (see, for example, \cite[sections 1.2 and 2.1]{Hat}.) 

After Lemma 6 in \cite{Lackenby2}, it is shown that, starting with any ideal triangulation of the punctured surface $F$, a bundle can be formed as a layered triangulation. This is done by realising a sequence of diagonal flips on the surface triangulation needed to achieve any given monodromy map. Such a triangulation then gives an ideal triangulation with a taut structure. So by Theorem \ref{semi-angle => 1-efficient} these are 1-efficient triangulations and hence admit index structures. \end{proof}

\begin{rmk}\label{small SFS lifted may not have index structure}
The small Seifert fibred spaces from the proof of Corollary \ref{atoroidal=>1-efficient} have finite sheeted coverings with embedded incompressible tori so that the lifted triangulations do not all admit index structures, in contrast with the hyperbolic case. \end{rmk} 

\begin{figure}[htbp]
\centering

\labellist
\pinlabel $L^{-1}$ at 120 140
\pinlabel $R$ at 117 268
\small\hair 2pt
\pinlabel $(0,0)$ at 44 19
\pinlabel $(1,0)$ at 161 19
\pinlabel $(1,1)$ at 124 88
\pinlabel $(0,1)$ at 9 88
\pinlabel $(0,1)$ at 9 274
\pinlabel $(0,0)$ at 44 208
\pinlabel $(1,0)$ at 161 208
\pinlabel $(1,1)$ at 178 134
\pinlabel $(0,-1)$ at 103 325
\pinlabel $(1,1)$ at 103 475
\pinlabel $(0,0)$ at 44 398
\pinlabel $(1,0)$ at 161 398

\endlabellist

\includegraphics[width=0.8\textwidth]{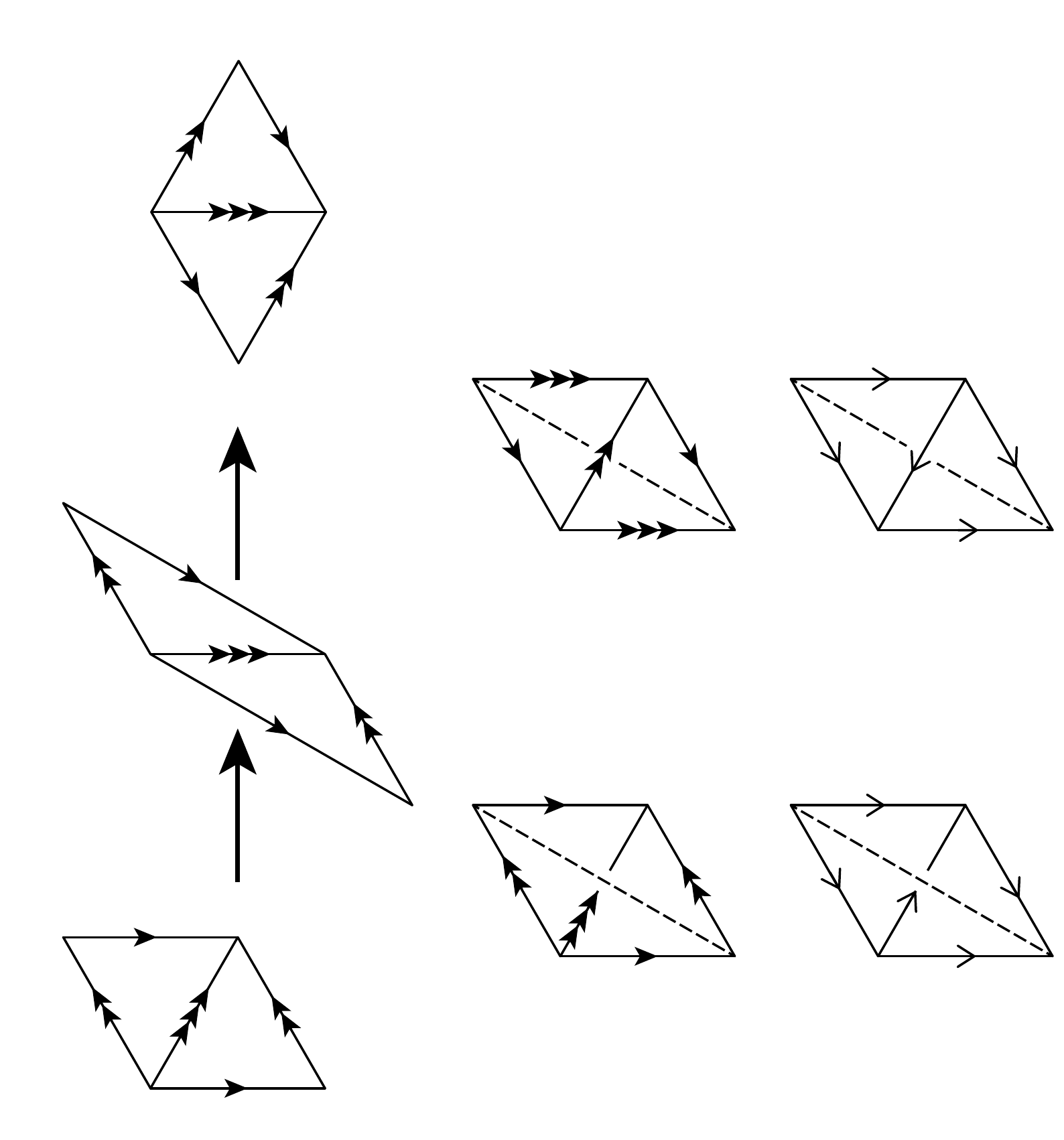}
\caption{A layered triangulation of the complement of the trefoil knot, seen as a punctured torus bundle. On the left, the monodromy is decomposed into generators which act on the punctured torus. The diagrams are shown sheared to highlight the fact that the monodromy has the effect of rotation by $-\pi/3$. The arrows show where edges of a triangulation of the punctured torus map to under the generators. In the middle, we realise each change in the triangulation by layering on a flat tetrahedron. The arrows are shown on the bottom and top of the stack of two tetrahedra to show the gluing. On the right, we see the edges after the identifications induced by gluing the top to the bottom. There are two tetrahedra and two edges in the triangulation, one of degree two (shown with a dashed line) and the other of degree ten.}
\label{trefoil_example}
\end{figure}

\begin{ex}
The trefoil knot complement has an ideal layered triangulation with two tetrahedra and two edges, one of degree $2$ and one of degree $10$. See Figure \ref{trefoil_example}. The complement of the trefoil knot can be seen as a punctured torus bundle with monodromy given by $RL^{-1}$, where 
\[
L=\begin{pmatrix}1&0\\1&1\end{pmatrix} \qquad R=\begin{pmatrix}1&1\\0&1\end{pmatrix}\negthinspace .
\]
Following the caption of Figure \ref{trefoil_example}, we obtain a triangulation of the complement of the trefoil consisting of two tetrahedra. The matrix form of the generalised angle structure equations for this triangulation is
\[
\begin{pmatrix} 
1&1&0&0&0&0\\
1&1&2&2&2&2\\
1&0&1&0&1&0\\
0&1&0&1&0&1
\end{pmatrix} 
\begin{pmatrix}
Z_1 \\ Z_2 \\ Z'_1 \\ Z'_2\\ Z''_1 \\ Z''_2
\end{pmatrix}
=
\begin{pmatrix}
2\pi \\ 2\pi \\ \pi \\ \pi
\end{pmatrix}  \negthinspace.
\]

There is a taut structure given by choosing angles $(\pi, \pi,0,0,0,0)^T$. This assigns the angle $\pi$ to the quad types facing the degree $2$ edge and $0$ to all other angles. This taut structure is compatible with the layering construction. By Theorem \ref{semi-angle => 1-efficient}, this triangulation is 1--efficient. 

It is easy to see that there are no other semi-angle structures for this particular triangulation, because of the degree 2 edge. However, consistent with Theorem \ref{index_structure_iff_1-efficient}, it admits an index structure. To see this, we have to produce a generalised angle structure for each of the $3^2=9$ possible quad-choices. However, by symmetry of the matrix we can reduce this number to three, represented by the following three pairs of conditions that must be satisfied by three generalised angle structures.
\[
(Z_1 > 0, Z_2>0), \qquad (Z_1 > 0, Z'_2>0),  \qquad (Z'_1 > 0, Z'_2>0)
\]
These three representatives are all satisfied by, for example, $(\pi,\pi,x,x,-x,-x)^T$ for any $x>0$.\\

Note that there is a well-known 6-fold cyclic covering 
by the bundle which is a product of a once punctured torus and a circle. This covering is toroidal so we see that there is an index structure on the trefoil knot space but not on this covering space. 
\end{ex}

\begin{figure}[htb]
\centering

\labellist
\small\hair 2pt
\pinlabel $e_1$ at 39 150
\pinlabel $e_2$ at 255 115
\pinlabel $a$ at 278 55
\pinlabel $b$ at 322 160
\pinlabel $q$ at 240 71
\endlabellist

\includegraphics[width=0.45\textwidth]{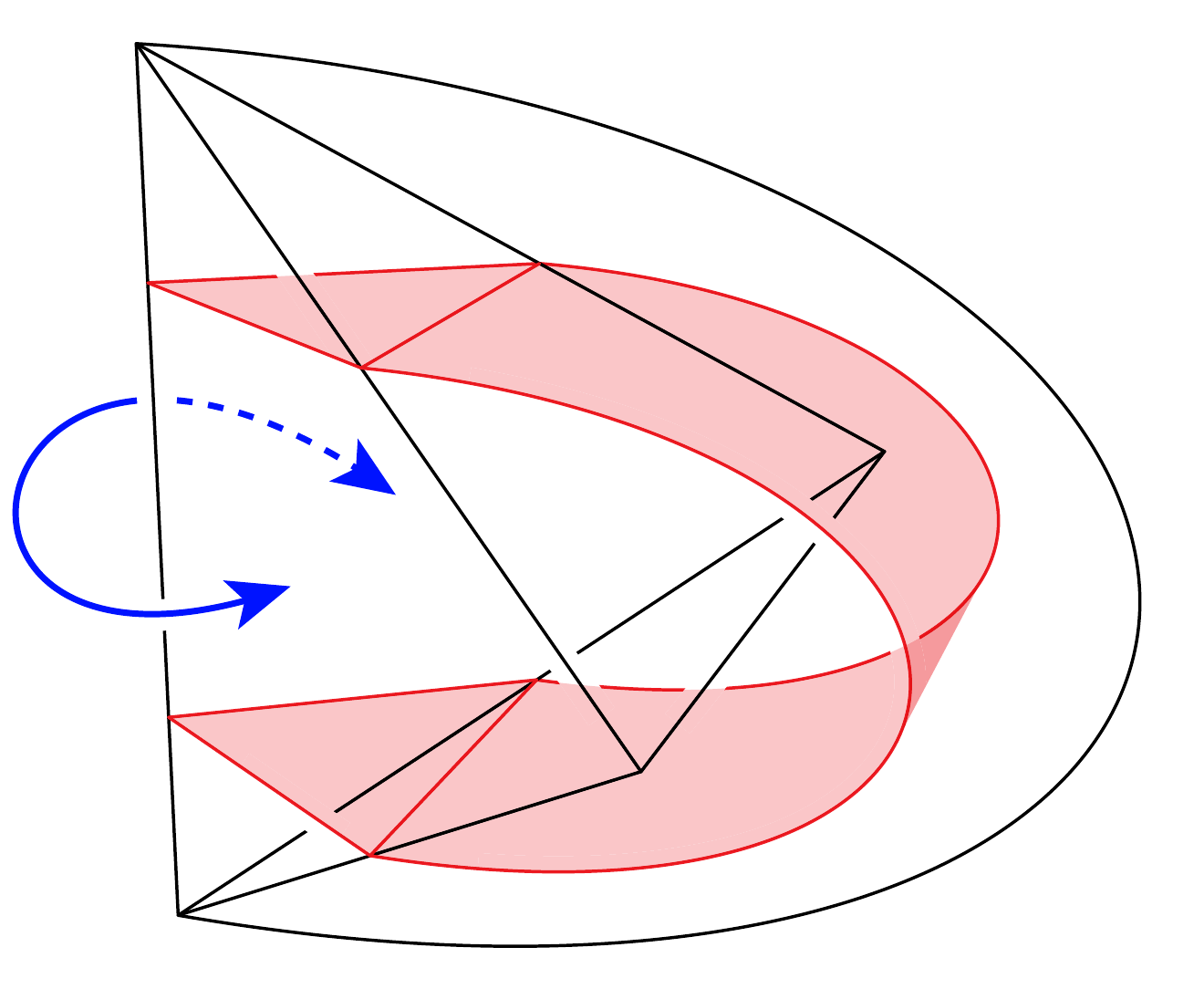}
\caption{Part of a triangulation that does not admit an index structure, and part of the corresponding surface. The edge $e_1$ is degree 1, so the two faces incident to it are identified (indicated by the arrows).}
\label{deg1_deg2_example}
\end{figure}

\begin{ex}
We give an example of a subset of a triangulation consisting of two tetrahedra identified in a particular way. Namely, we have a tetrahedron $\sigma_1$ with opposite edges $e_1$ of degree 1 and $e_2$ of degree 2, and another tetrahedron $\sigma_2$ which is the second tetrahedron incident to $e_2$. See Figure \ref{deg1_deg2_example}. If these tetrahedra are part of any ideal triangulation with torus boundary components then that triangulation will not have an index structure, and will have a normal torus that is not vertex-linking, so it is not 1--efficient.

First we show that there is no index structure. Since $e_1$ is degree 1, for any generalised angle structure the angle of $\sigma_1$ at the quad type facing $e_1$ must be $2\pi$. This quad type also faces $e_2$. The angle of the quad type in $\sigma_2$ facing $e_2$ must add to $2\pi$ to give $2\pi$, and so it must be zero. Therefore this angle can never be strictly positive, and so there is no index structure.

Next, we find the corresponding embedded normal torus. It has a single quadrilateral in $\sigma_2$, labelled $q$ in Figure \ref{deg1_deg2_example}. Two of its triangles are in $\sigma_1$, also shown. When the two identified faces of $\sigma_1$ are glued to each other, the boundary of the shown surface consists of the two arcs labelled $a$ and $b$, on two of the boundary faces of $\sigma_2$. Now consider the vertex--linking normal torus $T$, given by the link of the vertex at which the endpoints of $e_2$ meet. We complete our surface into an embedded normal torus by deleting from $T$ the normal triangles in $\sigma_1$ and $\sigma_2$ at the endpoints of $e_2$, and gluing the resulting boundary arcs to $a$ and $b$. The resulting surface is boundary parallel and so is a torus, but is obviously not vertex--linking since it contains a quadrilateral.
\end{ex}


\section{A review of the index of an ideal triangulation}
\label{sec.review}

\subsection{The tetrahedron index and its properties}

In this section we review the definition and the identities satisfied by
the tetrahedron index of \cite{DGG2}. For a detailed discussion, see 
\cite{Ga:index}.

The building block of the index $I_{\CT}$ of an ideal triangulation $\CT$
is the {\em tetrahedron index $\ID(m,e)(q) \in \BZ[[q^{1/2}]]$} defined by
\be
\lbl{eq.ID}
\ID(m,e)=\sum_{n=(-e)_+}^\infty (-1)^n \frac{q^{\frac{1}{2}n(n+1)
-\left(n+\frac{1}{2}e\right)m}}{(q)_n(q)_{n+e}}
\ee
where
$$
e_+=\max\{0,e\}
$$
and $(q)_n=\prod_{i=1}^n (1-q^i)$. If we wish, we can sum in the above 
equation over the integers, with the understanding that $1/(q)_n=0$ for $n < 0$.

The tetrahedron index satisfies the following {\em linear} recursion relations
\begin{subequations} 
\be
\lbl{eq.rec1}
 q^{\frac{e}{2}} \ID(m + 1, e) + q^{-\frac{m}{2}} \ID(m, e + 1) - \ID(m, e)=0
\ee
\be
\lbl{eq.rec2} 
 q^{\frac{e}{2}} \ID(m - 1, e) + q^{-\frac{m}{2}} \ID(m, e - 1) - \ID(m, e)=0
\ee
\end{subequations}
and
\begin{subequations} 
\be
\lbl{eq.rec1a}
\ID(m, e+1) + (q^{e + \frac{m}{2}} - q^{-\frac{m}{2}} - q^{\frac{m}{2}}) 
\ID(m, e) + \ID(m, e - 1)  = 0
\ee
\be
\lbl{eq.rec2a} 
\ID(m+1, e) + (q^{- \frac{e}{2} - m} - q^{-\frac{e}{2}} - q^{\frac{e}{2}}) 
\ID(m, e) +  \ID(m - 1, e) =   0 
\ee
\end{subequations}
and the {\em duality} identity
\be
\lbl{eq.Z2}
\ID(m,e)(q)=\ID(-e,-m)
\ee
and the {\em triality} identity
\be
\lbl{eq.Z3}
\ID(m,e)(q)=(-q^{\frac{1}{2}})^{-e}\ID(e,-e-m)(q) \,
=(-q^{\frac{1}{2}})^m \ID(-e-m,m)(q)
\ee
and the {\em pentagon} identity
\be
\lbl{eq.pentagon}
\ID(m_1-e_2,e_1)\ID(m_2-e_1,e_2)=\sum_{e_3 \in \BZ}
q^{e_3} \ID(m_1,e_1+e_3)\ID(m_2,e_2+e_3)\ID(m_1+m_2,e_3)\,,
\ee
and the {\em quadratic} identity
\be
\lbl{eq.IDquadratic}
\sum_{e \in \BZ} \ID(m,e)\ID(m,e+c)q^e =\delta_{c,0}=
\left\{\begin{array}{l}
 1 \text{ if } c=0\\  
 0 \text{ if } c\neq0\\  
 \end{array}\right.
\ee
The above relations are valid for all integers $m,e,m_i,e_i,c$. 

\subsection{The degree of the tetrahedron index}
\lbl{sub.degI}

The (minimum) degree $\d(m,e)$ with respect to $q$ of $\ID(m,e)$ is given by

\be
\lbl{eq.degID}
\d(m,e) = \frac{1}{2}\left( m_+(m+e)_+ + (-m)_+ e_+ + (-e)_+(-e-m)_+
+\max\{0,m,-e\} \right) 
\ee
It follows that $\d(m,e)$ is a piecewise quadratic polynomial shown in Figure \ref{degI}.
\begin{figure}[htb]
\centering

\labellist
\pinlabel $e=0$ at 65 145
\pinlabel $m=0$ at 235 295
\pinlabel $e+m=0$ at 405 35
\small\hair 2pt
\pinlabel $-\frac{em}{2}$ at 185 192
\pinlabel $\frac{m(e+m)}{2}+\frac{m}{2}$ at 335 160
\pinlabel $\frac{e(e+m)}{2}-\frac{e}{2}$ at 190 80
\endlabellist

\includegraphics[width=0.4\textwidth]{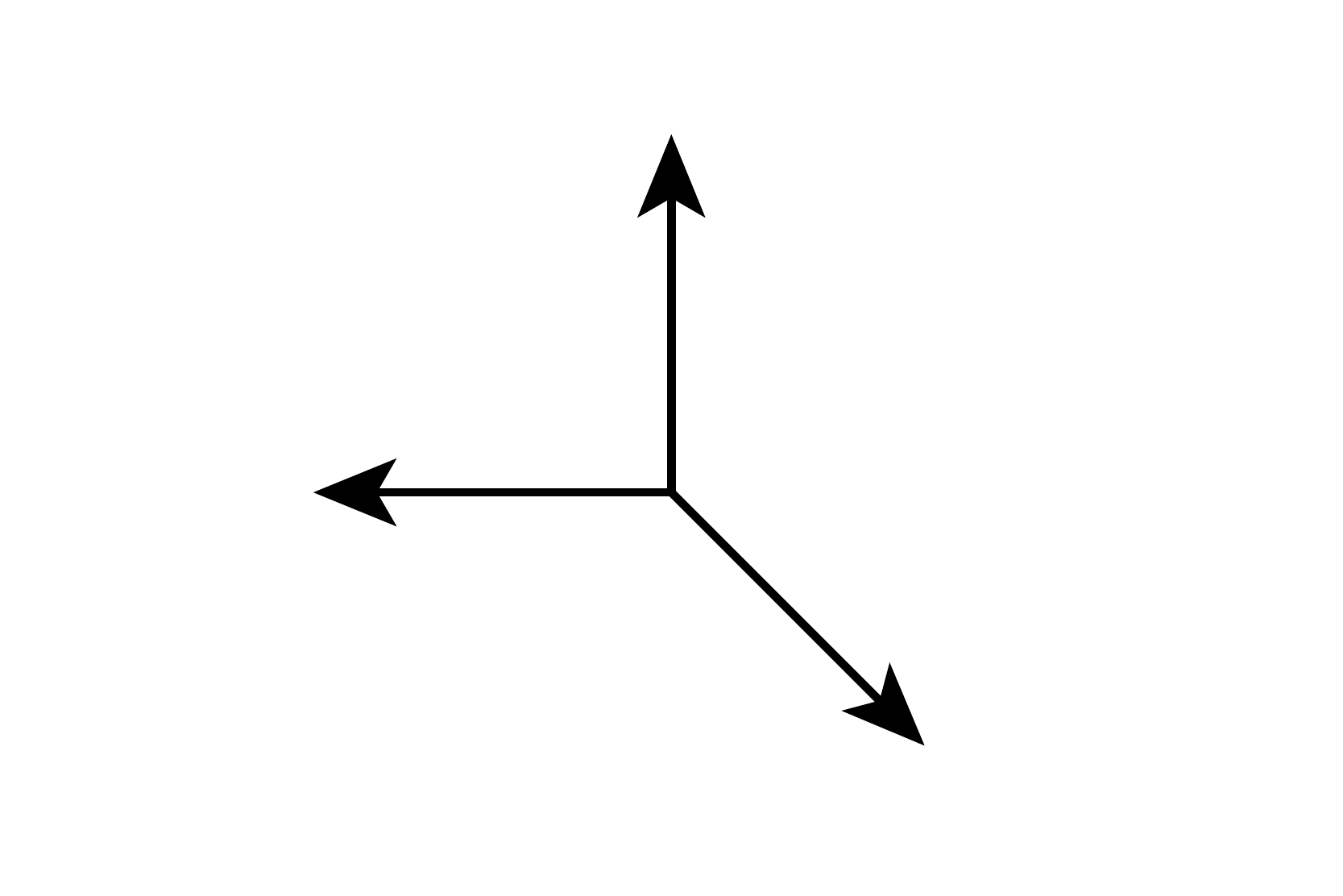}
\caption{The degree of the tetrahedron index $\ID(m,e)$. Here the positive $m$ axis is to the right and the positive $e$ axis is upwards.}
\label{degI}
\end{figure}

The regions of polynomiality of $\d(m,e)$ give a fan in $\BR^2$ with rays
spanned by the vectors $(0,1)$, $(-1,0)$ and $(1,-1)$. An important feature
of $\d$ is that it is a convex function on rays.

\subsection{Angle structure equations}
\label{sub.ge}

Recall the equations for a generalised angle structure as given in Definition \ref{matrix equations}.  In this section, we will refer to the angle variables within the $i$th tetrahedron, $\alpha(q_i) ,\alpha(q_i'),\alpha(q_i'')$, as  $Z_i,Z_i',Z_i''$ respectively.

We can view a quad-choice 
$Q$ for $\CT$ (as in Definition \ref{choice_of_quads}) as a choice of pair of opposite edges 
at each tetrahedron $\Delta_i$ for $i=1,\dots,N$. The quad-choice $Q$ can be used to eliminate
one of the three variables $Z_i,Z'_i,Z''_i$ at each tetrahedron using the
relation $Z_i+Z'_i+Z''_i=\pi$. Doing so, equations \eqref{eq.angle1} take the form
$$
\mb A \, Z + \mb B \, Z'' = \pi \boldsymbol{\nu} \,,
$$ 
where $\boldsymbol{\nu} \in \BZ^{N}$.
(For example, if we eliminate the variables $Z'_i$ then
$\mb A = \overline{\mb A}-\overline{\mb B}$, $\mb B = \overline{\mb C}-\overline{\mb B}$ 
and $\boldsymbol{\nu} = 2 (1,\dots,1)^T - \overline{\mb B} (1,\dots,1)^T$. 

The matrices $(\mb A|\mb B)$ have some key 
{\em symplectic properties}, discovered by Neumann-Zagier when $M$ is a 
hyperbolic 3-manifold (and $\CT$ is well-adapted to the hyperbolic stucture) 
\cite{NZ}, and later generalised to the case of arbitrary 3-manifolds 
in \cite{Neumann-combi}. Neumann-Zagier show that the rank of 
$(\mb A|\mb  B)$ is $N-r$, where $r$ is the number of boundary 
components of $M$; all assumed to be tori. 

\subsection{Peripheral equations}
\lbl{sub.peripheral}

Assume first, for simplicity, that $\bd M$ consists of a single torus, and let  $\varpi$ be an oriented  simple closed curve  in $\bd M$ that is in {\em normal position} with respect to the induced triangulation $\tri_\bd$ of $\bd M$.
Let 
\be
(\overline{a}_{\varpi}|\overline{b}_{\varpi}|\overline{c}_{\varpi})=
(\bar a_{\varpi,1} \ldots, \bar a_{\varpi,N} \mid  \bar b_{\varpi,1}, \ldots, \bar b_{\varpi,N} \mid \bar c_{\varpi,1}, \ldots, \bar c_{\varpi,N})
\ee
denote the vector in $\BZ^{3N}$
computed as follows.
See Figure \ref{truncated_tetra2}.

\begin{figure}[htbp]
\centering

\labellist
\small\hair 2pt
\pinlabel {Vertex 0} at 300 280
\pinlabel {Vertex 1} at 0 280
\pinlabel {Vertex 2} at 0 30
\pinlabel {Vertex 3} at 300 30
\pinlabel $Z$ at 150 18
\pinlabel $Z$ at 150 284
\pinlabel $Z'$ at 139 176
\pinlabel $Z'$ at 178 165
\pinlabel $Z''$ at 19 155
\pinlabel $Z''$ at 281 155

\footnotesize

\pinlabel $\bar b^0_\varpi$ at 208 212
\pinlabel $\bar a^0_\varpi$ at 199 272
\pinlabel $\bar c^0_\varpi$ at 268 207

\pinlabel $\bar c^3_\varpi$ at 254 106
\pinlabel $\bar b^3_\varpi$ at 194 108
\pinlabel $\bar a^3_\varpi$ at 200 50

\pinlabel $\bar c^2_\varpi$ at 30 99
\pinlabel $\bar b^2_\varpi$ at 95 93
\pinlabel $\bar a^2_\varpi$ at 102 31

\pinlabel $\bar a^1_\varpi$ at 100 256
\pinlabel $\bar b^1_\varpi$ at 106 194
\pinlabel $\bar c^1_\varpi$ at 45 199

\endlabellist

\includegraphics[width=0.45\textwidth]{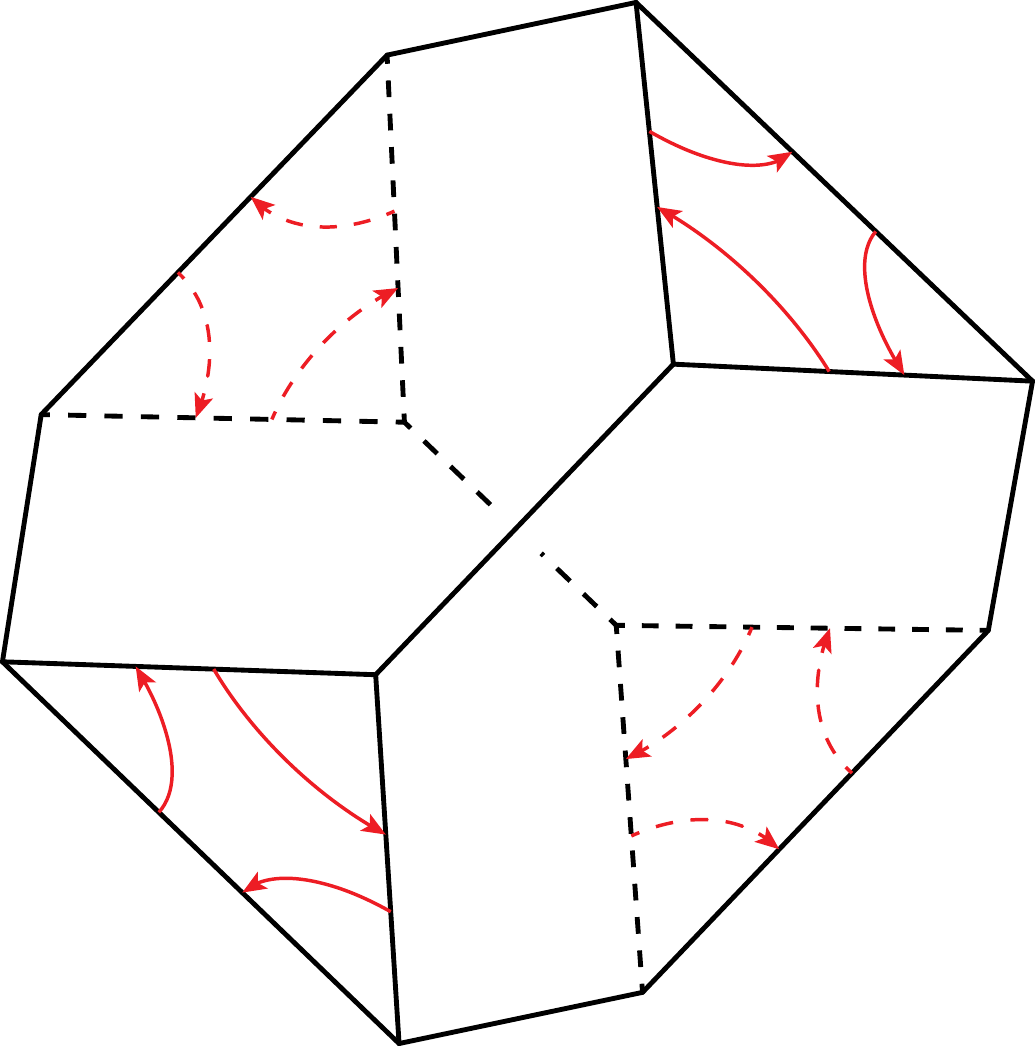}
\caption{Each term of the ``turning number'' vector $(\overline{a}_{\varpi}|\overline{b}_{\varpi}|\overline{c}_{\varpi})$ is calculated as a sum of the signed number of times the curve $\varpi$ turns anticlockwise around the corners of the triangular ends of the truncated tetrahedra. Edges and arcs on the back side of the tetrahedron are drawn with dashed lines.}
\label{truncated_tetra2}
\end{figure}

The term $\bar a_\varpi^l$ counts the signed number of normal arcs of $\varpi$ that turn anticlockwise around the corner of the truncated triangle associated to the variable $Z$, at vertex number $l$ of this tetrahedron. The entry in the vector $\overline{a}_{\varpi}$ for this tetrahedron is $\sum_{l=0}^3 \bar a_\varpi^l$, and similarly for the $\overline{b}_{\varpi}$ and $\overline{c}_{\varpi}$ terms. We suppress the vertex number superscripts from now on, since this data is implied by the location of the labels in the figures.

If we eliminate $Z'_j$ using $Z_j+Z'_j+Z''_j = \pi$, then we obtain the vector in $\Z^{2N}$
\be
(a_{\varpi}|b_{\varpi})=(\overline{a}_{\varpi}-\overline{b}_{\varpi}|
\overline{c}_{\varpi}-\overline{b}_{\varpi})
\ee
as well as the scalar 
\be\nu_{\varpi}=-\sum_{i=1}^N \overline{b}_{\varpi,j}.\ee
Similarly, we can define ``turning number'' vectors $(\overline{a}_{\varpi}|\overline{b}_{\varpi}|\overline{c}_{\varpi}) \in \Z^{3N}$
and $(a_{\varpi}|b_{\varpi}) \in \Z^{2N}$ for any oriented multi-curve $\varpi$ on $\bd M$ (i.e. a disjoint union of oriented simple closed normal curves on $\bd M$).

More generally, suppose that $M$ is a 3-manifold whose boundary $\bd M$ consists of $r \ge 1$ tori
$T_1, \ldots, T_r$. Let $\varpi=(\varpi_1, \ldots, \varpi_r)$ where $\varpi_i$ is an oriented multi-curve on $T_i$ for each $h=1, \ldots , r$.
Then we will use the notation 
\begin{equation}
\lbl{eq.nupi}
(\overline{a}_{\varpi}|\overline{b}_{\varpi}|\overline{c}_{\varpi}) = \sum_{h=1}^r 
(\overline{a}_{\varpi_h}|\overline{b}_{\varpi_h}|\overline{c}_{\varpi_h}),~~
(a_{\varpi}|b_{\varpi})= \sum_{h=1}^r  (a_{\varpi_h}|b_{\varpi_h}), \text{ and }
\nu_{\varpi} = \sum_{h=1}^r \nu_{\varpi_h}.
\end{equation}

\begin{rmk} \label{edge_eqn_as_holonomy}

Suppose that $\varpi=C_i$ is a small linking circle on $\bd M$ around 
one of the two vertices at the ends of the $i$th edge, with $C_i$ oriented 
anticlockwise as viewed from a cusp of $M$. Then 
$$(\overline{a}_{\varpi}|\overline{b}_{\varpi}|\overline{c}_{\varpi}) = 
(\bar a_{i1} \ldots, \bar a_{iN} \mid  \bar b_{i1}, \ldots, \bar b_{iN} \mid \bar c_{i1}, \ldots, \bar c_{iN})$$
 gives the coefficients of the $i$th edge equation as a special case of this construction. 

\end{rmk}

\subsection{The index of an ideal triangulation}
\lbl{sub.defn.index}

Suppose that $M$ is a 3-manifold whose boundary $\bd M$ consists of $r \ge 1$ tori
$T_1, \ldots, T_r$, and let $\CT$ be an ideal triangulation of $M$.
 Let $\varpi=(\varpi_1, \ldots, \varpi_r)$ be a collection of oriented peripheral curves
as above. 
By Theorem \ref{pick_edges}, proved below, 
we can order the edges of $\tri$ so that the first $N-r$ rows of the Neumann-Zagier matrix $(\mb A \mid \mb B)$
form an {\em integer basis} for its integer row space (i.e. the $\Z$-module  of all linear combinations
of its rows with integer coefficients). Then we define
 \be
\lbl{eq.indT}
I_{\CT}(\varpi)(q)=\sum_{\kk \in \BZ^{N-r}\subset \Z^N}
(-q^{\frac{1}{2}})^{\kk \cdot \bnu  + \nu_{\varpi}} 
\prod_{j=1}^N \ID(-b_{\varpi,j} - \kk \cdot \boldsymbol{b}_j  , 
a_{\varpi,j} +\kk \cdot \boldsymbol{a}_j  ) \,.
\ee
where $\boldsymbol{a}_j$ and $\boldsymbol{b}_j$ for $j=1,\dots, N$ denote the columns of $\mb A$ and
$\mb B$, 
and $$\Z^{N-r} = \left\{  (k_1, \ldots, k_N) \in \Z^N : k_j=0 \text{ for } j > N-r \right\} .$$

It can be checked that this definition is independent of the quad choice involved in 
forming $(\mb A \mid \mb B)$; see \eqref{JD_index_eq}.  
It is also independent of the choice of $N-r$ edges used to produce an integer basis for the 
integer row space of the Neumann-Zagier matrix, by Remark \ref{basis_indept}.
In the case of a 1-cusped manifold $M$, any $N-1$ edges can be used; in other words we could replace the domain of summation $\Z^{N-1}$ by any of the coordinate hyperplanes   $\left\{ (k_1, \ldots, k_N) \in \Z^N : k_s=0 \right\}$ with $s \in \{1, \ldots, N \}$. In general, we choose a set $\mathcal B$ of $N-r$ \emph{basic edges} whose corresponding rows we sum over, for example by using Theorem \ref{pick_edges}. Equivalently, we choose the complementary set $\mathcal X$ of \emph{excluded edges}.
 
Theorem \ref{index_isotopy_invariance}
below shows that
the index is unchanged by an isotopy of $\varpi$ so only depends on the homology class
$$[\varpi]=\left[ \sum \varpi_i\right]  \in H_1(\bd M; \Z) = \bigoplus_{i=1}^N  H_1(T_i;\Z).$$  
So the index
gives a well-defined function
$$
I_{\CT}: H_1(\pt M;\BZ) \longto \BZ((q^{1/2})) \text{ where } I_{\CT}([\varpi])=I_{\CT}(\varpi).
$$

If $M$ is a 1-cusped manifold $M$, and
$\mu$ and $\lambda$ in $H_1(\pt M;\Z)$ are a fixed oriented meridian and longitude on $\pt M$
(a canonical choice exists when $M$ is the complement of an oriented knot in $S^3$).
Then we can write 
\begin{equation}
\lbl{eq.varpiem}
[\varpi]=-m \lambda + e \mu
\end{equation} 
for integers $e,m$. 
The naming of  the integers $e$ and $m$ (electric and magnetic charge) 
and the above choice of signs was chosen to make our index compatible with
the definition of \cite{DGG2} and \cite{Ga:index}.

\subsection{Choice of edges in the summation for index}

Let $M$ be an 
orientable 3-manifold with $r \ge 1$ torus cusps and let $\CT$
be an ideal triangulation of $M$ with $N$ tetrahedra and, hence,  $N$ edges which we denote $e_1, \ldots, e_N$.
Let $G$ be 
the 1-skeleton $\CT^{(1)}$ of $\CT$ together with one (ideal) vertex for each cusp of $M$.
Note that $G$ has $r$ vertices and $N$ edges, 
and may contain loops (i.e. edges with both ends at a single vertex)  
or multiple edges between the same two vertices.
The incidence matrix $C = (c_{hi})$ for $G$  is an $r \times N$ matrix whose 
$(h,i)$ entry gives the number of ends of edge $i$ on cusp $h$. Note that
each $c_{hi} \in \{0,1,2\}$ and the sum of entries is $2$ in each column of $C$.
Let $E(e_i)=E_i  \subset  \Z^{2N}$
be the edge equation coefficients
corresponding to edge $e_i$ in $\CT$, and let
\be
\Lambda = \left\{ \sum_{\kk \in \Z^N} k_i E_i \right\} \subset  \Z^{2N}
\ee
be the lattice of all integer linear combinations of these. In other words,
$E_i$ is the $i$th row of the Neumann-Zagier matrix $(\mb A \mid \mb B)$, and
$\Lambda$ is the integer row space of this matrix.

From the work of Neumann and Zagier (see \cite{NZ} and \cite[Thm 4.1]{Neumann-combi}), 
the lattice $\Lambda$
has rank $N-r$ and the  matrix $C$ gives the linear relations between the 
edge equation coefficients $E_i \in  \Z^{2N}$. More precisely,
\be
\label{cusp_relations}
\sum_i c_{hi} E_i =0 \text{ for all } h=1, \ldots, r
\ee
and any other linear relation between the $E_i$ arises from
a real linear combination of the rows of $C$.

\begin{defn}\label{maximal tree with cycle}
A subset of the edges of a graph $\Gamma$ is a \emph{maximal tree with 1- or 3-cycle} in $\Gamma$ if (together with the vertices) it consists of any maximal tree $T$  
together with one additional edge that either (1) is a loop at one vertex, 
or (2) forms a 3-cycle together with two edges in $T$.
\end{defn}

\begin{thm} \label{pick_edges}
There exists an integer basis for $\Lambda$ consisting of $N-r$ 
of the edge equation coefficients $E_1, \ldots, E_n$.  In fact, we can choose
such a basis by omitting $r$ edge equations corresponding to a maximal tree with 1- or 3-cycle in $G$.
\end{thm}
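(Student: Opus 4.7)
The plan is to reduce the theorem to showing that the projection $\pi\colon K \to \Z^r$ onto the coordinates indexed by the $r$ omitted edges is a $\Z$-module isomorphism, where $K := \{\mathbf{k} \in \Z^N : \sum k_i E_i = 0\}$ is the integer kernel; by the cited Neumann-Zagier result, $K$ has $\Z$-rank $r$. Such a $\pi$ immediately produces, for each omitted $e_{i_j}$, an integer relation expressing $E_{i_j}$ as a $\Z$-linear combination of the kept $E_i$'s. Combined with the $\mathbb{Q}$-linear independence of the kept equations (which follows from the fact that the $r\times r$ submatrix $C|_H$ of $C$ on the omitted columns is nonsingular over $\mathbb{Q}$), these relations force the $N-r$ kept $E_i$'s to form a $\Z$-basis of $\Lambda$.

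To identify $K$ explicitly, first note that the cusp rows $R_1, \ldots, R_r$ of $C$ lie in $K$, and since every column of $C$ sums to $2$ we have $\sum_h R_h = 2V$, where $V := (1, 1, \ldots, 1)$; torsion-freeness of $\Lambda \subset \Z^{2N}$ then forces $V \in K$. Using the hypothesis that $G$ contains an odd cycle (a loop or a $3$-cycle), a short parity argument shows that $K = \langle R_1, \ldots, R_r, V\rangle$ with single relation $2V = \sum_h R_h$: any $\mathbf{k} = \sum \lambda_h R_h \in K$ forces each $\lambda_h \in \tfrac{1}{2}\Z$ (by propagating the integrality constraints $\lambda_a + \lambda_b \in \Z$ around an odd cycle), and the fractional parts $\tfrac{1}{2}(\beta_1, \ldots, \beta_r)$ with $\beta_h \in \{0,1\}$ must be constant across the connected graph $G$, hence a multiple of $V$. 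Consequently $\{R_1, \ldots, R_{r-1}, V\}$ is a $\Z$-basis of $K$.

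Next I would carry out the main determinant computation. Let $M_C$ and $M_K$ denote the $r \times r$ matrices whose rows are $R_1, \ldots, R_r$ and $R_1, \ldots, R_{r-1}, V$, respectively, each restricted to the $r$ columns of $H = T \cup \{e_0\}$. The classical incidence-matrix identity gives $|\det M_C| = 2$: a spanning subgraph with exactly one cycle has $|\det| = 2$ if the cycle is odd and $0$ if even, and $e_0$ closes an odd cycle of length $1$ or $3$ by hypothesis. The change-of-basis matrix from $\{R_1, \ldots, R_r\}$ to $\{R_1, \ldots, R_{r-1}, V\}$ has determinant $\pm 2$ (arising from the relation $R_r = 2V - \sum_{h<r} R_h$), so $|\det M_K| = |\det M_C|/2 = 1$. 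This unimodularity of $M_K$ is precisely the bijectivity of $\pi$.

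The main obstacle is the classical identity $|\det M_C| = 2$. The loop case is immediate: the loop's column equals $2\mathbf{e}_{h_0}$, so expansion along it reduces the computation to the reduced incidence matrix of the spanning tree $T$ with row $h_0$ deleted, which is well known to be unimodular. The $3$-cycle case requires somewhat more dexterity: after arranging $T$ to contain two edges of the triangle on $v_{h_1}, v_{h_2}, v_{h_3}$, the $3 \times 3$ subblock on these triangle vertices and columns can be handled directly, and the remaining rows and columns of $T$ are controlled by the same tree induction as in the loop case.
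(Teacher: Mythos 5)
Your proof is correct and takes a genuinely different route from the paper's. The paper's argument is constructive: it runs an iterative edge-collapsing procedure on a sequence of sign-decorated graphs (following Weeks' thesis), explicitly writing each omitted $E(s)$ as an integer combination of the kept equations and then checking, in the two ``endgame'' cases (loop and $3$-cycle), that the resulting coefficients remain even so division by $2$ stays integral. You instead argue algebraically: you identify the integer kernel $K = \ker\psi \cap \Z^N$ explicitly as $\langle R_1,\dots,R_{r-1},V\rangle_\Z$ via a parity-propagation argument using connectivity of $G$ and the presence of an odd cycle, and then reduce the whole claim to a single determinant computation, $|\det M_K|=\tfrac{1}{2}|\det M_C|=1$, using the classical fact that the unsigned vertex-edge incidence matrix of a connected spanning unicyclic subgraph has determinant $\pm 2$ when the unique cycle is odd. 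Your approach is cleaner and more conceptual: it makes completely transparent why the $1$- or $3$-cycle condition is exactly right (an even cycle would give $\det M_C=0$), and it connects to standard graph-theoretic facts rather than an ad hoc recursion. The paper's approach has the compensating virtue of being explicitly algorithmic, producing the change-of-basis coefficients directly.

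Two small points to tidy. First, the theorem's opening assertion requires knowing that a maximal tree with $1$- or $3$-cycle actually exists in $G$; you take this as a hypothesis, but it needs the brief observation (which the paper makes) that if $G$ has no loops then any $2$-simplex of $\CT$ has its three ideal vertices on three distinct cusps and hence yields a $3$-cycle. Second, the $3$-cycle determinant case is sketched rather than carried through; the cleanest way is not the ``$3\times 3$ block plus tree induction'' you indicate but rather to repeatedly expand along rows of degree-$1$ vertices (leaves of the unicyclic graph $H$), peeling back to the bare $3$-cycle whose $3\times 3$ unsigned incidence matrix has determinant $\pm 2$ by direct calculation. Neither issue is a real gap; both are routine to fill.
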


\begin{rmk}
In other words, we can choose any maximal tree with 1- or 3- cycle for our set $\mathcal X$ of excluded edges, and hence choose the remaining edges as our set $\mathcal B$ of basic edges.
\end{rmk}

This result and its proof were inspired by Jeff Weeks' argument in
\cite[pp. 35--36]{Weeks:thesis}.

\begin{proof}  First we show that we can find a maximal tree with 1- or 3-cycle.  
If there exists a loop in $G$ we use this loop together with any maximal tree.
If not, any face of the triangulation has its ideal vertices on 3 distinct cusps.
Pick two edges of this face and  extend these to a maximal tree $T \subset G$. Adding
the third edge of the face gives the desired subgraph.

Now let $S$ be a maximal tree with 1- or 3-cycle. Next we show that the $N-r$ equations $E(e)$ corresponding to the edges $e \notin S$ give 
an integer basis for $\Lambda$.  We show that for each $s\in S,$ the equation $E(s)$ can be written as an integer linear combination of
the equations $E(e)$ with $e \notin S$. Given this, the $N-r$ equations $E(e)$ with $e \notin S$ form an integer spanning set for $\Lambda$. The work of
Neumann and Zagier (\cite{NZ} and \cite[Thm 4.1]{Neumann-combi}), implies that these equations are also linearly independent, hence form an integer basis for $\Lambda$, and we are done.

So, we have to show that every $E(s)$ can be written as an integer linear combination of the $E(e)$ for $e\notin S$. To organise the construction, we use the following sequence of \emph{decorated graphs}. At each step we have a graph $G_k$ whose edges are labelled by names of  edges of $G$. 
We decorate each end of each edge of $G_k$ with a sign. Each vertex $v$ of $G_k$ is then incident to a set of ends of edges with signs. We list the names of the edges, together with the sign associated to this end: $\{ (e_{i_v(1)}, \epsilon_v(1)), (e_{i_v(2)}, \epsilon_v(2)), \ldots, (e_{i_v(d)}, \epsilon_v(d))\}$. Here $d$ is the degree of the vertex $v$. To this vertex we associate the equation
$$R_k(v) = \sum_{l=1}^d \epsilon_v(l) E(e_{i_v(l)})=0.$$

For each $G_k$ we have a subset $S_k$ of the edges of $G_k$ which is a maximal tree with 1-or 3-cycle in $G_k$. We set $G_0=G$ and $S_0=S$, with all signs set to $+$. Note that the equations associated to the vertices of $G_0$ are then the same as those given by  \eqref{cusp_relations}. 

We obtain the graph and edge subset $(G_{k+1},S_{k+1})$ from $(G_{k},S_{k})$ as follows. We arbitrarily choose a vertex $v$ of $G_k$ that has only one end of one edge $s$ of $S_k$ incident.  If there are no such vertices then the sequence ends at $(G_k,S_k)$. Let $w$ be the other end of $s$, which by assumption is distinct from $v$. The graph $G_{k+1}$ is the result of collapsing the edge $s$ of $G_k$; the two ends of $s$, $v$ and $w$, are identified in $G_{k+1}$. We label the edges of $G_{k+1}$ with the same names as in $G_k$ and set $S_{k+1}=S_k \setminus \{s\}.$ All of the signs decorating $G_{k+1}$ are the same as in $G_k$, except that the ends of edges that were incident to $v$ have their signs flipped. See Figure \ref{collapse_edge}.

\begin{figure}[htbp]
\centering

\labellist
\pinlabel $v$ at 82 170
\pinlabel $w$ at 82 80
\pinlabel $s$ at 66 125
\scriptsize\hair 2pt
\pinlabel $+$ at 66 155
\pinlabel $+$ at 46 169
\pinlabel $-$ at 58 188
\pinlabel $+$ at 79 188
\pinlabel $-$ at 96 155

\pinlabel $+$ at 66 95
\pinlabel $+$ at 96 95
\pinlabel $+$ at 46 74
\pinlabel $+$ at 57 54
\pinlabel $+$ at 80 54

\pinlabel $+$ at 227 110
\pinlabel $+$ at 241 92
\pinlabel $+$ at 265 92
\pinlabel $+$ at 280 102
\pinlabel $+$ at 280 135
\pinlabel $-$ at 264 141
\pinlabel $+$ at 243 141
\pinlabel $-$ at 227 125

\endlabellist

\includegraphics[width=0.6\textwidth]{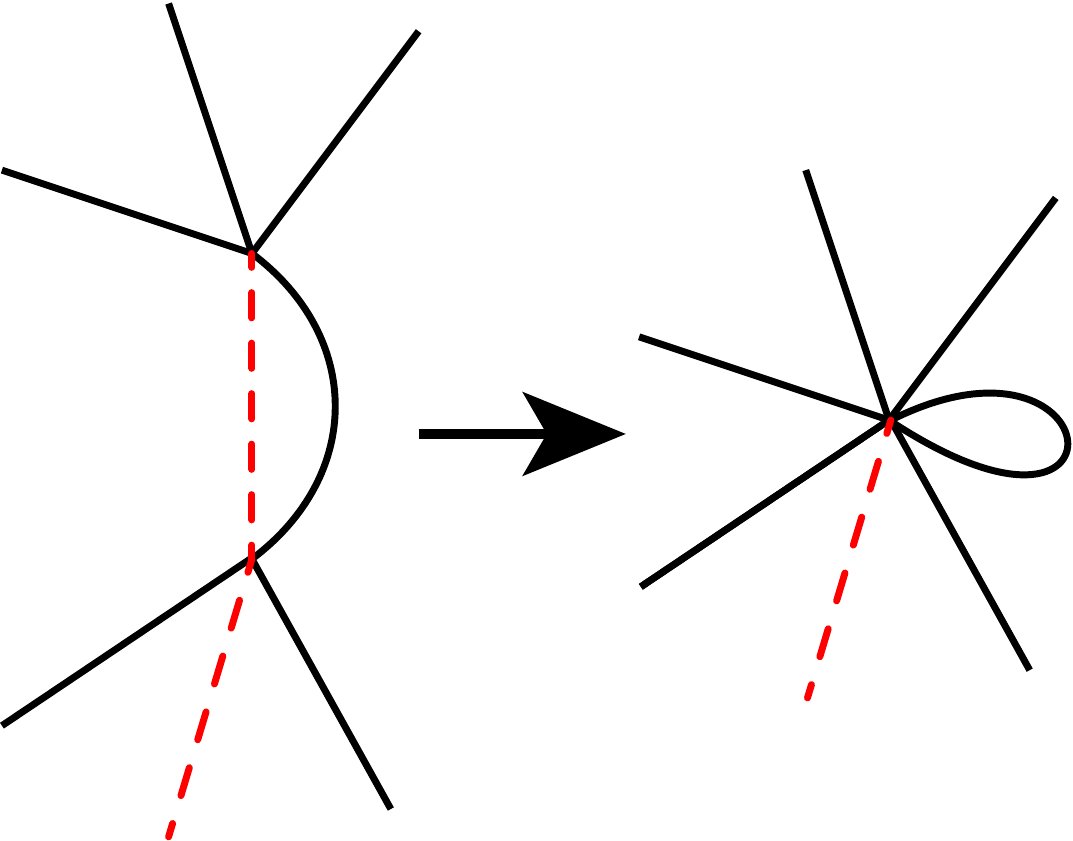}
\caption{Collapsing the edge $s$ flips the signs on the ends of the edges incident to $v$. Edges in $S$ are drawn dashed.}
\label{collapse_edge}
\end{figure}

Note that at each step of the sequence, both ends of each element of $S_k$ have a $+$ sign, since they do in $G_0$ and we never collapse an edge from a vertex with more than one incident edge end in $S$.
Consider the equations associated to the vertices of $G_k$ and $G_{k+1}$. We have

$$R_k(v) = +E(s) + \sum_{l, e_{i_v(l)} \neq s} \epsilon_v(l) E(e_{i_v(l)})=0, \quad R_k(w) = +E(s) + \sum_{m, e_{i_w(m)} \neq s} \epsilon_w(m) E(e_{i_w(m)})=0.$$

If we use $R_k(v)$ to solve for $E(s)$ we get 

$$E(s) = \sum_{l, e_{i_v(l)} \neq s} -\epsilon_v(l) E(e_{i_v(l)}).$$

Substituting this into $R_k(w)$ gives

$$ \sum_{l, e_{i_v(l)} \neq s} -\epsilon_v(l) E(e_{i_v(l)}) + \sum_{m, e_{i_w(m)} \neq s} \epsilon_w(m) E(e_{i_w(m)})=0.$$

This is the equation associated to the vertex of $G_{k+1}$ formed by the identification of $v$ with $w$. 
Thus, the sequence of graphs gives an expression for $E(s)$ for each edge $s\in S$ which is removed. 

This expression is an integer linear combination of the $E(e)$ for $e \notin S$. The sequence ends, at $G_K$ say. By construction $G_K$ has no vertices for which only one end of an edge of $S$ is incident. If we are in case (1) of Definition \ref{maximal tree with cycle} then $G_K$ has one vertex, $S_K$ has one edge and $G_K$ looks like Figure \ref{collapse_edge_endgame} (left). If we are in case (2) then $G_K$ has three vertices, $S_K$ has three edges, and $G_K$ looks like Figure \ref{collapse_edge_endgame} (right). 

\begin{figure}[htb]
\centering
\subfloat[The last graph when $S$ has a 1-cycle.]{

\labellist
\pinlabel $s$ at 16 102
\tiny\hair 2pt
\pinlabel $+$ at 50 99
\pinlabel $+$ at 50 59
\pinlabel $+$ at 108 99
\pinlabel $+$ at 108 59

\pinlabel $-$ at 99 108
\pinlabel $+$ at 59 108
\pinlabel $-$ at 99 50
\pinlabel $-$ at 59 50
\endlabellist

\includegraphics[width=0.30\textwidth]{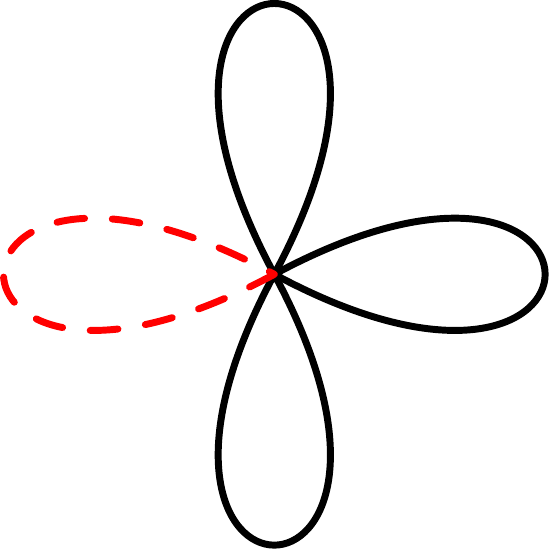}
\label{endgame_1}}
\qquad
\subfloat[The last graph when $S$ has a 3-cycle.]{

\labellist
\pinlabel $s_z$ at 99 120
\pinlabel $s_x$ at 194 120
\pinlabel $s_y$ at 147 51
\pinlabel $y$ at 147 164
\pinlabel $x$ at 90 68
\pinlabel $z$ at 204 68
\pinlabel $X$ at 40 28
\pinlabel $Z$ at 263 56
\pinlabel $Y$ at 185 190
\tiny\hair 2pt
\pinlabel $+$ at 106 70
\pinlabel $+$ at 99 81
\pinlabel $+$ at 188 70
\pinlabel $+$ at 195 81
\pinlabel $+$ at 139 152
\pinlabel $+$ at 155.3 152

\pinlabel $-$ at 61 77
\pinlabel $+$ at 57 42
\pinlabel $+$ at 87 27
\pinlabel $-$ at 102 40
\pinlabel $-$ at 125 163
\pinlabel $-$ at 125 195
\pinlabel $+$ at 168 178
\pinlabel $-$ at 209 94
\pinlabel $-$ at 233 79
\pinlabel $+$ at 241 72
\pinlabel $-$ at 241 44
\pinlabel $-$ at 233 33
\pinlabel $+$ at 208 31
\pinlabel $-$ at 192 40
\endlabellist

\includegraphics[width=0.44\textwidth]{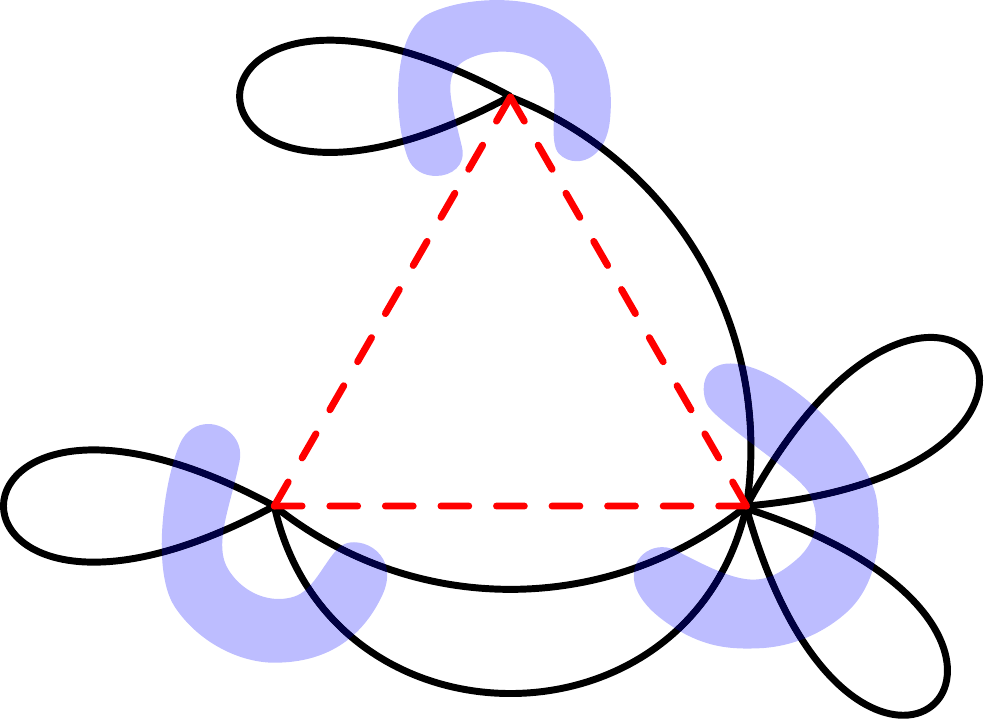}
\label{endgame_2}}
\caption{The last graph in the sequence has one of these two forms.}
\label{collapse_edge_endgame}
\end{figure}

In the first case, the equation from the last vertex is of the form $$2E(s) + \sum_{l, e_i(l) \neq s} \epsilon(l) E(e_{i(l)}) = 0.$$
Notice that since all edges are now loops, each $E(e_i)$ appears with total coefficient either $-2,0$ or $2$. So we can divide the entire equation by $2$, and get $E(s)$ as an integer linear combination of the $E(e_i)$. 

The second case is slightly more complicated. We have three vertices $x,y,z$, with three edges $s_x,s_y,s_z\in S$ connecting the vertices into a triangle. The three vertices give equations of the form

$$E(s_y) + E(s_z) + X = 0, \quad E(s_z) + E(s_x) + Y = 0, \quad E(s_x) + E(s_y) + Z= 0,$$
where $X,Y,Z$ are terms coming from the edges not in $S$. We can solve these equations for $E(s_z)$ as
$$2E(s_z) = -X-Y+Z,$$
and the other two expressions similarly. We just have to show that $-X-Y+Z$ has even coefficients, and then we will be done. As before, any loops contribute a coefficient in $\{-2,0,2\}$ to one of $X,Y$ or $Z$. For edges with the same endpoints as one of $s_x,s_y$ or $s_z$, their coefficients are $1$ or $-1$ at two of $X,Y$ and $Z$ and $0$ at the third. Thus their contribution to $-X-Y+Z$ is also in $\{-2,0,2\}$, and so $-X-Y+Z$ has even coefficients, as required.
\end{proof}

\begin{ex} The following example shows that the edges omitted must be chosen carefully in the multi-cusped case.
Let $M$ be the Whitehead link complement (with the triangulation given by m129 in SnapPy notation). 
Then the matrix of edge equations 
is
$$(\mb A \mid \mb B) = 
\begin{pmatrix}
2 & -1 & 1 & 1 & \vline & 1 & -2 &  0 & 0 \\
-1 & 0 & 0 & 0  & \vline & -1 & 1 & 1 & 1 \\
0 & 1 & -1 & -1 & \vline & 1 & 0 & -2 & -2 \\
-1 & 0 & 0 & 0 & \vline & -1 & 1 & 1 & 1
\end{pmatrix}
$$

The cusp incidence matrix is 
$$C = \begin{pmatrix} 1 & 2 & 1 & 0 \\ 1 & 0 & 1 & 2 \end{pmatrix}$$
and the corresponding graph $G$ is shown in Figure \ref{wh_link_edges_and_cusps}.

\begin{figure}[htbp]
\centering

\labellist
\pinlabel $e_1$ at -5 20
\pinlabel $e_3$ at 168 20
\pinlabel $e_0$ at 81.5 44
\pinlabel $e_2$ at 81.5 -5
\pinlabel $c_0$ at 49 30
\pinlabel $c_1$ at 114 30

\endlabellist

\includegraphics[width=0.36\textwidth]{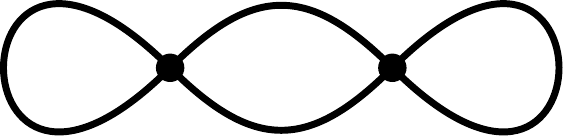}
\caption{The graph of edges and cusps for the Whitehead link.}
\label{wh_link_edges_and_cusps}
\end{figure}

The rows $E_0, E_1, E_2, E_3$ of $(\mb A \mid \mb B)$ satisfy the relations: 
$$E_0 + 2 E_1 + E_2   = 0 \text{ (from cusp 0)}$$
 and 
 $$E_0 + E_2  + 2 E_3  = 0 \text{ (from cusp 1)},$$
which imply that $E_1=E_3$.\\

It follows that $E_0, E_1$ are linearly independent and form an integer basis for
the $\Z$-span of $\{ E_0,E_1,E_ 2,E_3 \}$.
(This basis corresponds to removing the edge $e_2$  
in a maximal tree and an additional loop $e_3$.)

On the other hand, $E_0$, $E_2$ are also linearly independent
and $2E_1$ is in the $\Z$-span of $\{ E_0,E_2 \}$ but $E_1$ is not in $\Z$-span
of $\{ E_0,E_ 2 \}$.
So  $\Z\text{-span} \{ E_0,E_2 \}$ is an index 2 subgroup in $\Z\text{-span} \{ E_0,E_1 \}$.
Thus, a summation using $E_0, E_2$ will most likely
give a different result for the index than using $E_0, E_1$.
\end{ex}

\subsection{A reformulation of the definition of the index}
It is sometimes convenient to work with a slight variation on the tetrahedral index function
\eqref{eq.ID}.
Whenever $a-b, b-c \in \Z$ we define 
\be
\lbl{eq.JDdef}
\JD(a,b,c)= (-q^{\frac{1}{2}})^{-b} \ID(b-c,a-b)
= (-q^{\frac{1}{2}})^{-c} \ID(c-a,b-c) = (-q^{\frac{1}{2}})^{-a} \ID(a-b,c-a).
\ee
Note that the above expressions are equal by the triality identity (\ref{eq.Z3}) for $\ID$, and by
using the duality identity (\ref{eq.Z2}), it follows that $\JD$ is invariant under 
{\em all  permutations} of its arguments.  
Further, we have
\be
\lbl{eq.JD_add_tet_sol}\JD(a+s,b+s,c+s) = (-q^{\frac{1}{2}})^{-s} \JD(a,b,c) \text{ for all } s \in \R.
\ee
We also note that the quadratic identity \eqref{eq.IDquadratic} can be rewritten in the form
\be
\lbl{eq.JD_quad_id}
\sum_{a\in\Z} \JD(a,b,c) \JD(a+x,b,c) \,q^a= \delta_{x,0}.
\ee
This follows since
\begin{align*} 
LHS &= \sum_{a\in\Z} (-q^{1/2})^{-b} \ID(b-c,a-b)\, (-q^{1/2})^{-b} \ID(b-c,a-b+x) \, q^a \\
&= \sum_{e\in \Z} \ID(m,e) \ID(m,e+x) \,q^e = \delta_{x,0}
\end{align*}
by (\ref{eq.IDquadratic}) with $m=b-c$ and  $e = a - b$.\\

Now suppose that $M$ is  a 3-manifold  whose boundary $\bd M$ consists of $r \ge 1$ tori.
Let $\overline{\mb A}$,  $\overline{\mb B}$ and  $\overline{\mb C}$ be the matrices of
angle structure equation coefficients as in Definition \ref{matrix equations}, and 
let $\bar{\boldsymbol{a}}_j, \bar{\boldsymbol{b}}_j, \bar{\boldsymbol{c}}_j$ for $j=1,\dots N$ denote the columns of $\overline{\mb A}$,  $\overline{\mb B}$ and  $\overline{\mb C}$ respectively. 
For each $\kk \in \Z^N$ and oriented multi-curve $\varpi$ in $\bd M$ representing a homology class
$[\varpi]  \in H_1(\bd M,\Z)$,
let 
$$\bar a_j(\kk,\varpi)=\kk \cdot \bar\aa_j + \bar a_{\varpi,j},~~
\bar b_j(\kk,\varpi)=\kk \cdot \bar\bb_j + \bar b_{\varpi,j},~~
\bar c_j(\kk,\varpi)=\kk \cdot \bar\cc_j + \bar c_{\varpi,j},
$$
and 
$$a_j(\kk,\varpi) =\bar a_j(\kk,\varpi)- \bar b_j(\kk,\varpi),~
b_j(\kk,\varpi)=\bar c_j(\kk,\varpi)-\bar b_j(\kk,\varpi) .$$

Then the definition \eqref{eq.indT} of the index of the triangulation $\mathcal{T}$ of a manifold $M$ 
can be written 
 \be
I_{\CT}(\varpi)(q)=\sum_{\kk \in \BZ^{N-r}\subset \Z^N}
(-q^{\frac{1}{2}})^{\kk \cdot \bnu + \nu_{\varpi}} 
\prod_{j=1}^N \ID(-b_j(\kk,\varpi), a_j(\kk,\varpi) ) \, ,
\ee
where the sum is over $\boldsymbol{k}$ in any coordinate plane $\Z^{N-r} \subset \Z^N$
corresponding to a set $\mathcal B$ of $N-r$ basic edges as given by Theorem \ref{pick_edges}.

Let $\kk=(k_1, \ldots, k_N)$, $\bnu=(\nu_1, \ldots, \nu_N)$ and let $\bar b_{ij}$ be the $(i,j)$ entry of $\overline{\mb B}$. Then we have 
$$\kk \cdot \bnu = \sum_i k_i \nu_i = \sum_i 2 k_i - \sum_{i,j} k_i \bar b_{ij}
= \sum_i 2 k_i - \sum_j \kk \cdot \bar \bb_j$$
and $\nu_\varpi = - \sum \bar b_{\varpi,j}$, so
$$\kk \cdot \bnu  + \nu_\varpi =  \sum_i 2 k_i - \sum_j \bar b_j(k,\varpi).$$
Hence, grouping together the contributions from tetrahedron $j$, we have
\begin{align}
I_{\CT}(\varpi)(q) &= \sum_{\kk \in \Z^{N-r}} 
 q^{\sum_i k_i}   \prod_j  (-q^{\frac{1}{2}})^{- \bar b_j (k,\varpi)} 
\ID(\bar b_j(\kk,\varpi)-\bar c_j(\kk,\varpi) , \bar a_j(\kk,\varpi) -\bar b_j(\kk,\varpi)) \notag\\
& = \sum_{\kk \in \Z^{N-r}} 
 q^{\sum_i k_i}   \prod_j 
\JD(\bar a_j(\kk,\varpi), \bar b_j(\kk,\varpi) , \bar c_j(\kk,\varpi)),
\label{JD_index_eq}
\end{align}
where $\Z^{N-r} \subset \Z^N$ corresponds to a set $\mathcal B$ of $N-r$ basic edges as given by Theorem \ref{pick_edges}.

In particular, this expression shows that the index does not depend on the quad-choice used in the original definition.

\begin{rmk}\label{basis_indept}
Next we show that the definition of index in \eqref{eq.indT} does not depend on the 
choice of integer basis for the integer row space $\Lambda \subset \R^{2N}$ of the Neumann-Zagier 
matrix $(\mb A \mid \mb B)$.

Each $x \in \Lambda$ can be written in the form 
\be \label{lambda_rep}
x = \sum_i k_i E_i
\ee
 where $E_i$ is the 
$i$th row of $(\mb A \mid \mb B)$ and $\kk =(k_1, \ldots, k_N) \in \Z^N$.
We claim that the expression
\be\label{term_in_sum}
J(x,\varpi) = q^{\sum_i k_i}   \prod_j  \JD(\bar a_j(\kk,\varpi), \bar b_j(\kk,\varpi) , \bar c_j(\kk,\varpi))
\ee
is well-defined,  depending only on $x \in \Lambda$ and not on the choice of $\kk$ in \eqref{lambda_rep}.

To see this, consider the linear map 
$\psi : \R^N \to \R^{2N}$ defined by  $\psi(k_1, \ldots,k_N) = \sum_i k_i E_i$
and let $\langle C \rangle \subset \R^N$ be the real subspace generated by the cusp relation vectors
$(c_{h1}, \ldots, c_{hN})$ where the cusp index $h$ varies over $\{1, \ldots, r \}$. 
Then $\psi(\langle C \rangle) = 0$ by the cusp relations  \eqref{cusp_relations}, 
and the work of Neumann and Zagier (\cite{NZ,Neumann-combi}) also shows that
$\dim \rm{Im}\, \psi =N-r$ and $\dim \langle C \rangle=r$
where $r$ is the number of cusps. Hence $\langle C \rangle = \ker \psi$. 

So if $x = \sum_i k_i E_i = \sum_i k'_i E_i$ where $\kk'=(k'_1, \ldots, k'_N) \in \R^N$ then
$\kk'=\kk + \boldsymbol{c}$ where  $\boldsymbol{c} \in \langle C \rangle$.
We claim that replacing $\kk$ by $\kk'$ does not change the  expression \eqref{term_in_sum}.
To see this, suppose
we replace $\kk$ by $\kk'$ where $k'_i= k_i + s  c_{hi} $ for $i=1, \ldots, N$ and $s \in \R$.
Then the term $q^{\sum_i k_i}$ in \eqref{term_in_sum} is multiplied 
by $q^{s n_h}$ where  $n_h$ is the number of vertices in the triangulation of cusp $h$, while 
$\bar a_j(\kk,\varpi), \bar b_j(\kk,\varpi) , \bar c_j(\kk,\varpi)$ are increased by $s$ for each
triangle of tetrahedron $j$ lying in the cusp $h$.
By (\ref{eq.JD_add_tet_sol}), this changes $\prod_j \JD(\bar a_j(\kk,\varpi), \bar b_j(\kk,\varpi) , \bar c_j(\kk,\varpi))$ by a factor $(-q^{1/2})^{-2s n_h}$ since there are $2n_h$ triangles on cusp $h$.
Hence the right hand side of \eqref{term_in_sum} does not change.

We conclude that the expression for index in \eqref{JD_index_eq} can be rewritten in the form
\be
I_\CT(\varpi) = \sum_{x \in \Lambda} J(x,\varpi)
\ee
and so does not depend on a choice of basis for $\Lambda$.
Further, we can evaluate $I_\CT(\varpi)$ by choosing an integer basis for $\Lambda$
corresponding to a set of basic edges as given by Theorem \ref{pick_edges}, and we recover
the definition of index in  \eqref{eq.indT}.

It also follows that we can write the index in the form 
\be
I_\CT(\varpi) = \sum_{\kk \in S} q^{\sum_i k_i}   \prod_j 
\JD(\bar a_j(\kk,\varpi), \bar b_j(\kk,\varpi) , \bar c_j(\kk,\varpi)),
\ee
where $S \subset \Z^N$ is any complete set of coset representatives for 
$(\Z^N + \langle C \rangle)/\langle C \rangle \subset \R^N /\langle C \rangle$. 
 \end{rmk}

\subsection{Invariance of index under isotopy of peripheral curve}

\begin{thm}
\label{index_isotopy_invariance}
Let $\varpi$ be an oriented simple closed curve $\varpi$ in $\partial M$ which is a normal curve relative to the triangulation $\mathcal{T}_\bd$ of  $\partial M$. 
Then the  index $I_\mathcal{T}(\varpi)$ is invariant under isotopy of the curve $\varpi$ in $\bd M$.
\end{thm}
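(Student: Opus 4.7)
The plan is to reduce the claim to invariance under a small list of elementary combinatorial moves, then to verify invariance of $I_\CT(\varpi)$ under each move using the reformulation \eqref{JD_index_eq} of the index together with the structural results of Remark \ref{basis_indept}. By a standard transversality argument, any isotopy of $\varpi$ in $\bd M$ can be perturbed so that $\varpi$ remains in normal position with respect to $\CT_\bd$ except at finitely many critical times. Away from these times the turning vector $(\overline a_\varpi |\overline b_\varpi|\overline c_\varpi)$ does not change, so it suffices to analyze the critical moves. A curve tangent to an edge of $\CT_\bd$ undergoes a bigon creation/cancellation; the two arcs in the adjacent triangles contribute turnings at corners of the same edge that cancel in pairs, so the turning vector is unchanged by such a move. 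The essential case is therefore the move in which $\varpi$ is pushed across a single vertex $v$ of $\CT_\bd$.

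Suppose $v$ is one of the two ends of an edge $e_i$ of $\CT$, lying on a cusp torus $T_h$. I would analyze the arcs of $\varpi$ in the star of $v$ before and after the push. The key input is Remark \ref{edge_eqn_as_holonomy}, which identifies the turning vector of a small anticlockwise linking loop $C_i$ around $v$ with the $i$th row $\overline E_i = (\bar a_{i,1},\dots,\bar a_{i,N} \mid \bar b_{i,1},\dots,\bar b_{i,N} \mid \bar c_{i,1},\dots,\bar c_{i,N})$ of the edge equation matrix. Since the ``before'' arc and ``after'' arc in the star of $v$ together bound a disc containing only the vertex $v$, the signed difference between the two turning vectors is expressible as $\pm \overline E_i$, possibly corrected by a $\Z$-multiple of a cusp-relation vector $\boldsymbol{c}_h$.

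Having identified the change, invariance of the index would be checked algebraically. Using the reformulation from Remark \ref{basis_indept}, write $I_\CT(\varpi) = \sum_{x\in\Lambda} J(x,\varpi)$ as a sum over the integer row space $\Lambda$ of the Neumann--Zagier matrix. A shift of the turning vector by $\pm \overline E_i$ corresponds, via $\bar a_j(\kk,\varpi') = \bar a_j(\kk\pm\boldsymbol{e}_i,\varpi)$ (and similarly for $\bar b_j,\bar c_j$), to a reindexing $x\mapsto x \pm E_i$ inside the sum, which is a bijection of $\Lambda$. The resulting prefactor contribution from $q^{\sum_l k_l}$ must be shown to cancel exactly against the change in the $\JD$ arguments, using the shift property \eqref{eq.JD_add_tet_sol} of $\JD$ and the fact (established in Remark \ref{basis_indept}) that modifications of $\kk$ by cusp-relation vectors $\boldsymbol{c}_h$ leave each summand $J(x,\varpi)$ unchanged. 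Combining these observations gives $I_\CT(\varpi') = I_\CT(\varpi)$ for the push-past-vertex move, and hence for any isotopy.

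The main obstacle is the geometric step: pinning down the exact change of $(\overline a_\varpi |\overline b_\varpi|\overline c_\varpi)$ under a push-past-vertex move, particularly tracking the interaction with the other corners at neighboring vertices of $v$ in $\CT_\bd$ so that one sees both the $\overline E_i$ contribution and any cusp-relation correction. Once the precise local change is written as an element of the $\Z$-span of $\{\overline E_i\} \cup \{\boldsymbol{c}_h\}$, the algebraic invariance follows from the reindexing described above and poses no further difficulty.
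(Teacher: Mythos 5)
Your plan has a gap that is not cosmetic but fatal to the argument as sketched: the elementary isotopy moves you isolate (bigon creation/cancellation across an edge, and pushing past a vertex) produce curves that are \emph{not normal} with respect to $\CT_\bd$, so the turning vector $(\overline a_\varpi |\overline b_\varpi|\overline c_\varpi)$ of the moved curve is not directly defined in the sense used in the index formula. Your claim that ``the turning vector is unchanged'' under a bigon move therefore does not parse as stated: the move changes the number of intersections of the curve with the $1$--skeleton, and the arcs created in the neighbouring triangle are backtracks rather than normal arcs. The paper gets around precisely this difficulty by \emph{extending} the calculation of the index to arbitrary transverse curves, assigning a multiplicative factor $(-q^{1/2})^{\pm1}$ to each backtrack, and then checking invariance of the extended calculation under a finite list of local pictures. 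Your proposal has no analogue of that extension, so the intermediate stages of the isotopy are not covered.

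The more decisive problem is in the vertex-push step. You argue by a disc-bounding observation that the signed difference of turning vectors is $\pm\overline E_i$ (up to cusp-relation corrections) and that the resulting reindexing $x\mapsto x\pm E_i$ of the sum over $\Lambda$ together with \eqref{eq.JD_add_tet_sol} gives invariance. But this is exactly the computation the paper carries out for adding a small vertex-linking loop $C_i$, and the answer it records is that such a shift multiplies the index by $q$, not by $1$: after replacing $\kk$ by $\kk\mp\boldsymbol e_i$ the $\JD$ product is unchanged, and the prefactor $q^{\sum_l k_l}$ changes by exactly $q^{\pm1}$. There is nothing left to ``cancel exactly against the change in the $\JD$ arguments''; the prefactor change survives. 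A cusp-relation correction cannot repair this either, since by Remark \ref{basis_indept} adding a multiple of $\boldsymbol{c}_h$ to $\kk$ leaves each summand $J(x,\varpi)$ unchanged and so cannot absorb a global factor of $q$. What actually saves the day in the paper is that the pushed curve is non-normal, and the $q^{\pm1}$ from the vertex loop is cancelled against $(-q^{1/2})^{\mp 2}$ coming from two backtracks; this is the identity ``an anticlockwise loop around a vertex is cancelled by two negative backtracks'' in the paper's proof. Since your proposal omits backtracks entirely, the $q^{\pm1}$ factor is unaccounted for, and the argument as written would prove the false statement $I_\CT(\varpi')=q^{\pm1}I_\CT(\varpi)$. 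To repair this you would need to either redo the paper's bookkeeping for non-normal curves, or find a different way to express the change in turning vector that already incorporates the backtrack corrections; the clean ``$\pm\overline E_i$'' claim is simply not the correct local change.
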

\begin{proof}
Suppose we have two isotopic oriented normal curves $\varpi_1, \varpi_2$. Then we can convert one into the other via a sequence of moves (and their inverses) of the form shown in Figure \ref{isotopy moves}. That is, we choose a point $p$ on the curve $\varpi$ and an arc $\alpha$, disjoint  from $\varpi$ other than at $p$, and which joins $p$ to either a vertex or a point in the interior of an edge of $\mathcal{T}_\bd$. We then push the curve along and in a regular neighbourhood of $\alpha$ over the vertex or edge.

\begin{figure}[htb]
\centering
\includegraphics[width=0.4\textwidth]{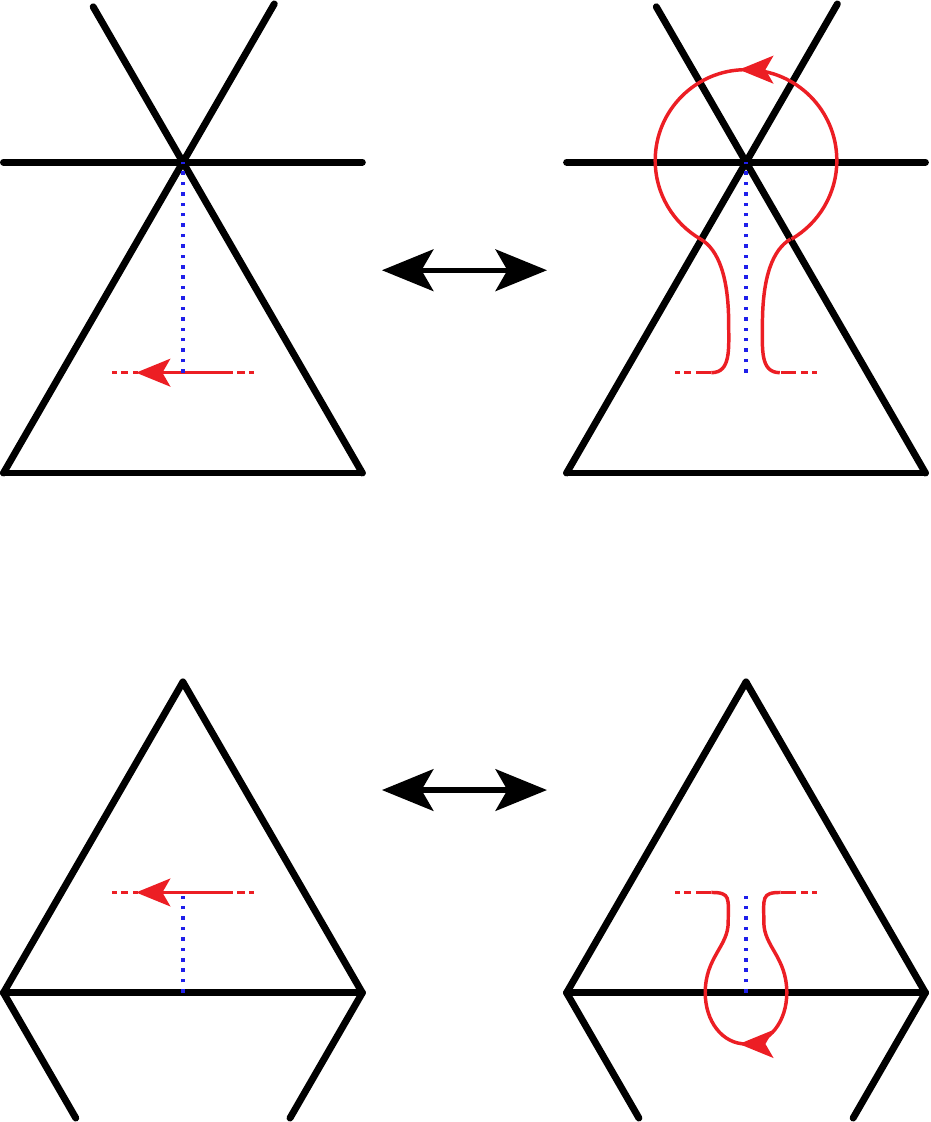}
\caption{The two kinds of isotopy move on a curve relative to the triangulation of $\partial M$.}
\label{isotopy moves}
\end{figure}

We will show that $I_\mathcal{T}(\varpi)$ is invariant under these moves. Note that the result of these isotopies will not in general be normal curves, so we need to extend the definition of the index to deal with these cases as well. The class of curves we work in consists of oriented simple closed curves, transverse to $\mathcal{T}_\bd^{(1)}$ and disjoint from  $\mathcal{T}_\bd^{(0)}$. For our purposes we will deal only with curves that are non trivial in $H_1(\partial M)$, and so none of our curves is disjoint from $\mathcal{T}_\bd^{(1)}$. Given such a curve, it enters a triangle somewhere on one edge, and can exit out either of the two other edges, or the same edge that it entered, either to the left or the right of its entry point. Thus there are four ways in which a component of a curve intersects a given triangle. These contribute to the index in the following way. See Figure \ref{4_types_of_curve_in_triangle2}.

\begin{figure}[htb]
\centering
\labellist
\pinlabel $+$ at 25 15
\pinlabel $-$ at 200 15
\pinlabel $\oplus$ at 274 10
\pinlabel $\ominus$ at 431 10
\endlabellist
\includegraphics[width=0.8\textwidth]{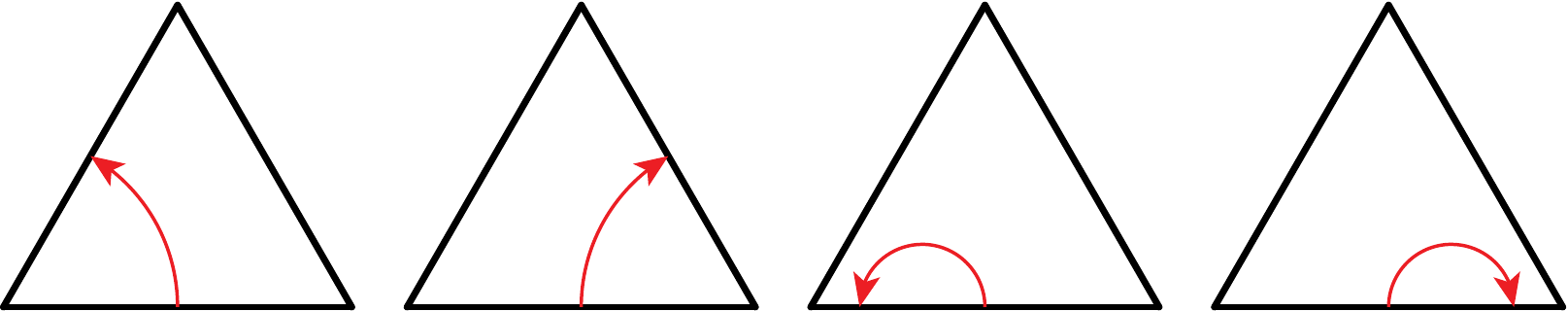}
\caption{The four ways in which a curve can travel through a triangle. We call the last two possibilities a \emph{positive backtrack} and \emph{negative backtrack} respectively.}
\label{4_types_of_curve_in_triangle2}
\end{figure}

If the curve turns either left or right around a corner of the triangle then it contributes to the index in exactly the same way as for a normal curve: we add $+1$ to the entry in the vector $(\bar{a}_\varpi | \bar{b}_\varpi | \bar{c}_\varpi)$ corresponding to the angle at the edge of the tetrahedron we are turning around if we are going anticlockwise around the corner, and add $-1$ if we are going clockwise. Compare with Figure \ref{truncated_tetra2}.

Here we define the effect of backtracks on the index calculation: (This is, of course, chosen in such a way as to be consistent with the index calculated with curves without backtracks.) We do not change the vector $(\bar{a}_\varpi | \bar{b}_\varpi | \bar{c}_\varpi)$. These backtracks only alter the power of $(-q^{1/2})$, either multiplying the expression by $(-q^{1/2})$ for a positive backtrack (turning to the left), or by $(-q^{1/2})^{-1}$ for a negative backtrack (turning to the right). We indicate these using the symbols $\oplus$ and $\ominus$. 

Note that by (\ref{eq.JD_add_tet_sol}), a positive backtrack has the same effect on the index as anticlockwise turns around each of the three corners of a triangle. Note also that by (\ref{JD_index_eq}), an anticlockwise loop around a vertex of the triangulation produces a power of $(-q^{1/2})^{2}$. This follows since adding an anticlockwise loop around an end of the $i$th edge has the effect of shifting the sum by one in the $k_i$ component. The terms are unchanged after shifting other than the term $q^{\sum_i k_i}$, and the effect is to multiply the index by $q$.

Thus an anticlockwise loop around a vertex is cancelled by two negative backtracks, and anticlockwise turns around each of the three corners of a triangle are cancelled by one negative backtrack.

\begin{figure}[htbp]
\centering
\labellist
\small\hair 2pt
\pinlabel $+$ at 37 396
\pinlabel $+$ at 37 376
\pinlabel $+$ at 68 396
\pinlabel $+$ at 68 376
\pinlabel $+$ at 52.5 404
\pinlabel $+$ at 52.5 368
\pinlabel $\ominus$ at 37 340
\pinlabel $\ominus$ at 68 340

\pinlabel $+$ at 165.1 396
\pinlabel $+$ at 165.1 376
\pinlabel $+$ at 196.1 396
\pinlabel $+$ at 196.1 376
\pinlabel $+$ at 180.6 404
\pinlabel $\ominus$ at 196.1 340
\pinlabel $-$ at 145.1 305
\pinlabel $-$ at 216.1 305

\pinlabel $+$ at 293.3 396
\pinlabel $+$ at 293.3 376
\pinlabel $+$ at 324.3 396
\pinlabel $+$ at 324.3 376
\pinlabel $+$ at 308.8 404
\pinlabel $+$ at 308.8 368
\pinlabel $\ominus$ at 324.3 340
\pinlabel $\ominus$ at 324.3 316.5

\pinlabel $+$ at 421.5 396
\pinlabel $+$ at 421.5 376
\pinlabel $+$ at 452.5 396
\pinlabel $+$ at 452.5 376
\pinlabel $+$ at 437 404
\pinlabel $\ominus$ at 437 310
\pinlabel $-$ at 401.7 305
\pinlabel $-$ at 472.7 305

\pinlabel $+$ at 116.75 252
\pinlabel $+$ at 81.75 192
\pinlabel $+$ at 151.75 192
\pinlabel $\ominus$ at 116.5 174.5

\pinlabel $\oplus$ at 229.6 199.5
\pinlabel $\ominus$ at 244.6 174.5

\pinlabel $\ominus$ at 386.8 199.5
\pinlabel $\oplus$ at 372.8 174.5

\pinlabel $\ominus$ at 116.5 75
\pinlabel $\oplus$ at 116.5 30

\pinlabel $\ominus$ at 244.6 44
\pinlabel $\ominus$ at 244.6 30
\pinlabel $\oplus$ at 230.6 55
\pinlabel $\oplus$ at 258.6 55

\pinlabel $\ominus$ at 386.8 55
\pinlabel $\oplus$ at 372.8 30

\endlabellist
\includegraphics[width=\textwidth]{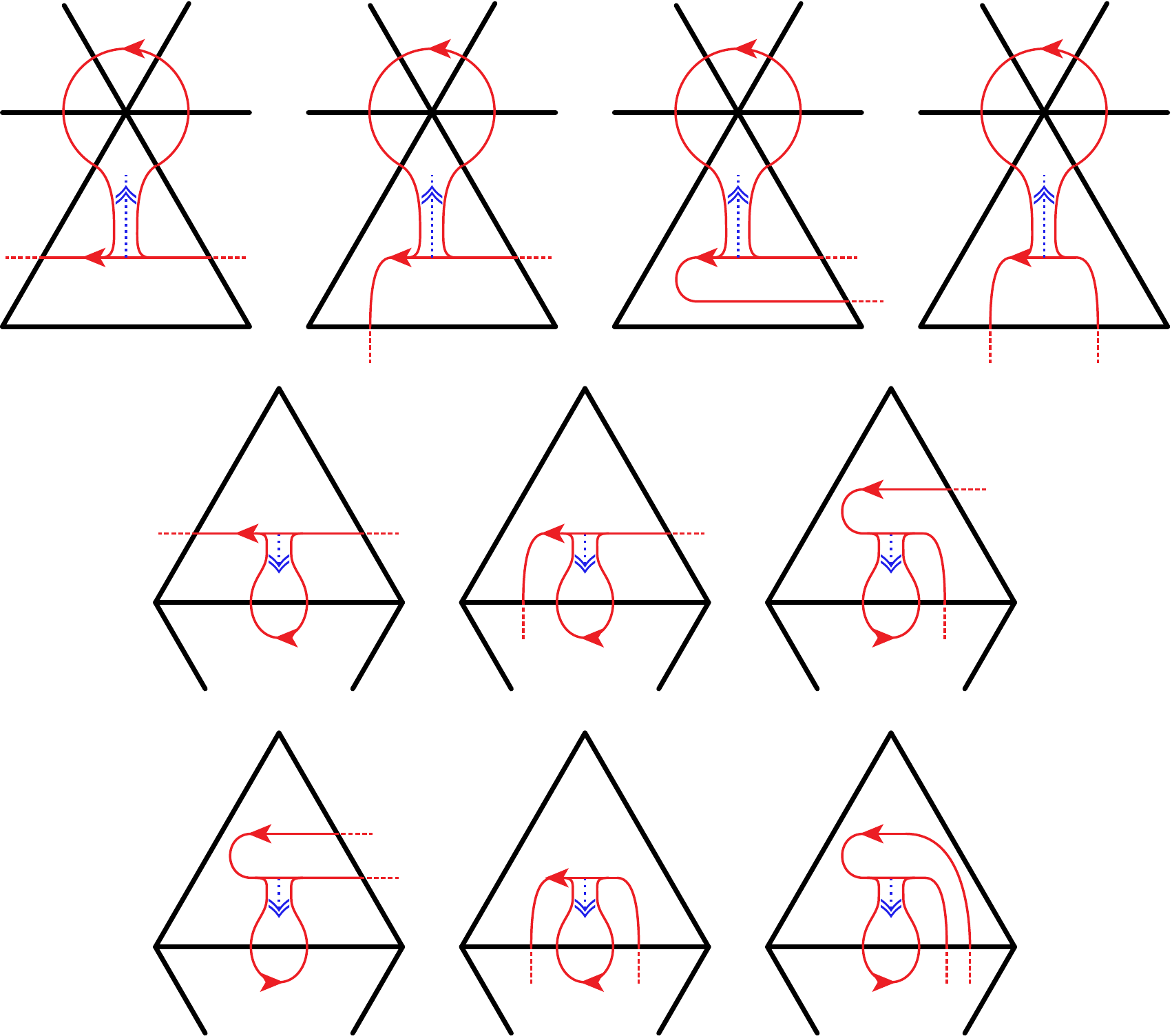}
\caption{Four cases of isotopy across a vertex and six cases of isotopy across an edge. All other cases are symmetries of these.}
\label{index_changes_under_isotopies}
\end{figure}

Now all we need to do is to show that each version of the moves from Figure \ref{isotopy moves} preserves the index, using the above rules. There are different versions of the isotopy moves depending on where the curve we are acting on enters or exits the triangle. We show the possibilities in Figure \ref{index_changes_under_isotopies}. Here the $+,-,\oplus$ and $\ominus$ signs show the \emph{difference} in the index calculation under the isotopy as we change from one curve to the other in the direction following the double head arrow. Note that reversing the arrow on the curve flips all of the signs, as does reflecting the picture. With combinations of these symmetries applied to the ten cases shown we obtain all possible ways in which the isotopy can be made relative to the position of the curve. Considering each case in turn, we can see that the signs cancel out and so the index is unchanged by these moves. 

For example, consider the second diagram in the top row of Figure \ref{index_changes_under_isotopies}. We start with a curve that enters the right side of the triangle and exits the bottom. We isotope this curve by pushing it over the top vertex of the triangle. This has the following effects:
\begin{itemize}
\item Remove an anticlockwise turn around the lower right corner of the triangle, this changes the coefficient at that angle by $-1$.
\item Add a negative backtrack to the right edge of the triangle.
\item Add an anticlockwise turn around each of the angles at the top vertex of the triangle other than the top corner of the triangle itself.
\item Add a clockwise turn around the lower left corner of the triangle. 
\end{itemize}
We view the top corner of the triangle as having both a $+$ and a $-$, so that the total change in the index calculation consists of one anticlockwise turn around a vertex, one negative backtrack and clockwise turns around each of the three corners of the triangle. By our above rules, these cancel out and so the index is unchanged.
\end{proof}

\begin{rmk}
These calculations are exactly analogous to those for calculating the holonomy of a peripheral curve given shapes of ideal hyperbolic tetrahedra satisfying Thurston's gluing equations. Thus this argument can easily be adapted to reprove the well-known fact that the holonomy is independent of the choice of simple closed curve representing an element of $H_1(\bdry M;\Z).$
\end{rmk}

\section{Invariance of index under the 0--2 move}
\label{sec.02move}

Let $M$ be a cusped 3-manifold and consider the 0--2 move on a pair $\CT$ and $\wt\CT$ of ideal
triangulations of $M$ 
with $N$ and $N+2$ tetrahedra, as shown in Figure 
\ref{0-2move_labelled}. 

\begin{figure}[htb]
\centering
\labellist
\pinlabel $e$ at 32 95
\pinlabel $e$ at 32 95
\pinlabel $e_2$ at 8 129
\pinlabel $e_4$ at 55 127
\pinlabel $e_1$ at 8 55
\pinlabel $e_3$ at 55 54

\pinlabel $e_2$ at 165 124
\pinlabel $e_4$ at 215 123
\pinlabel $e_1$ at 165.5 55
\pinlabel $e_3$ at 216 55
\pinlabel $e'$ at 175 79.5
\pinlabel $e''$ at 207 97.5
\pinlabel $\wt{e}$ at 186 110.5

\small\hair 2pt
\pinlabel $\bar a_2$ at 113 58
\pinlabel $\bar a_1$ at 138 58
\pinlabel $\bar c_2$ at 121 71
\pinlabel $\bar b_2$ at 121 45
\pinlabel $\bar b_1$ at 130 71
\pinlabel $\bar c_1$ at 130 45

\pinlabel $\bar a_1$ at 238 70
\pinlabel $\bar a_2$ at 263 70
\pinlabel $\bar c_1$ at 246 83
\pinlabel $\bar b_1$ at 246 57
\pinlabel $\bar b_2$ at 255 83
\pinlabel $\bar c_2$ at 255 57

\pinlabel $\bar a_1$ at 172 20
\pinlabel $\bar c_1$ at 159 20
\pinlabel $\bar b_1$ at 185 20
\pinlabel $\bar a_2$ at 172 9.4
\pinlabel $\bar b_2$ at 159 9.4
\pinlabel $\bar c_2$ at 185 9.4

\pinlabel $\bar a_2$ at 216 162
\pinlabel $\bar c_2$ at 203 162
\pinlabel $\bar b_2$ at 229 162
\pinlabel $\bar a_1$ at 216 151.4
\pinlabel $\bar b_1$ at 203 151.4
\pinlabel $\bar c_1$ at 229 151.4

\endlabellist
\includegraphics[width=\textwidth]{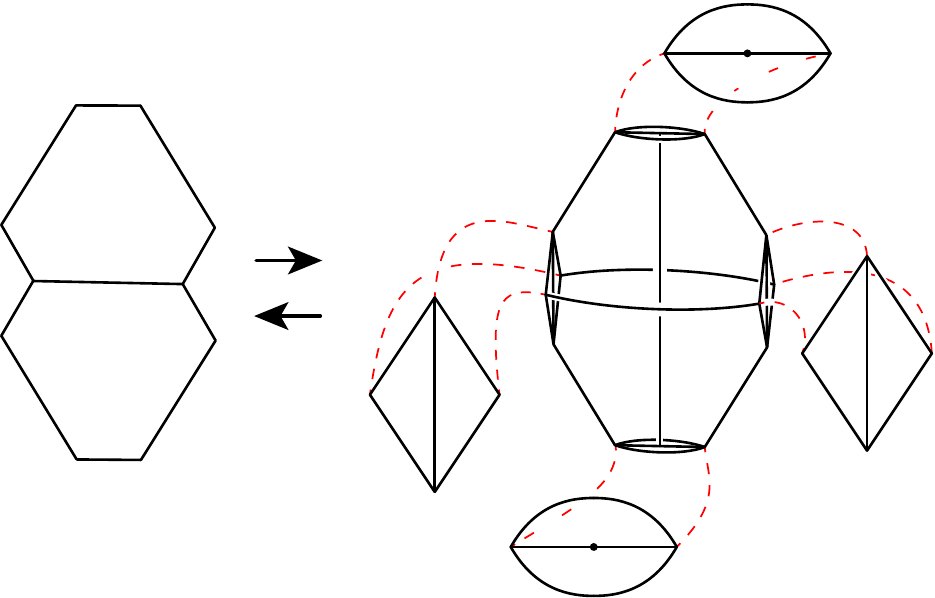}
\caption{The 0--2 move shown with truncated tetrahedra. The four triangulated ends of the new pair of tetrahedra are shown. All of the ``zoomed in'' pictures are as seen from viewpoints outside of the pair of tetrahedra. The labels at the corners of the triangles in the ``zoomed in'' pictures are explained in Section \ref{sub.peripheral}.
}

\label{0-2move_labelled}
\end{figure}

\begin{thm}
\lbl{thm.20move.index}
Suppose that $\CT$ and $\wt\CT$ are ideal triangulations related by a 
0--2 move and both admit an index structure. Then, for any $[\varpi] \in H_1(\bd M;\Z)$,
$I_{\CT}([\varpi])=I_{\wt\CT}([\varpi])$.
\end{thm}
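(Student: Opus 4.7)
The plan is to reduce the invariance of the index under the 0--2 move to the quadratic identity \eqref{eq.JD_quad_id} for the tetrahedron index. Combinatorially, the 0--2 move replaces the edge $e$ shared by the two triangular faces with two parallel edges $e', e''$ (having the same endpoints as $e$) and the degree-2 edge $\tilde e$ internal to the bigon, while introducing two new tetrahedra $T_{N+1}, T_{N+2}$. Thus $\tilde\CT$ has $N+2$ tetrahedra and $N+2$ edges; the summation in \eqref{JD_index_eq} for $I_{\tilde\CT}(\varpi)$ acquires two new $\kk$-variables compared to $I_\CT(\varpi)$, and the integrand acquires two new $\JD$-factors. The whole argument is then a calculation that these two extra summations collapse those two extra factors.

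First, using Theorem \ref{index_isotopy_invariance}, I would isotope $\varpi$ off the cusp triangles on $\bd M$ created by the 0--2 move, so that the turning-number vector $(\bar a_\varpi \mid \bar b_\varpi \mid \bar c_\varpi)$ for $\tilde\CT$ simply extends the one for $\CT$ by zeros in the columns of $T_{N+1}, T_{N+2}$, and $\nu_\varpi$ is unchanged. Next, using Theorem \ref{pick_edges} together with Remark \ref{basis_indept}, I would choose the basic edges of $\tilde\CT$ to consist of those of $\CT$ together with $e'$ and $\tilde e$, so that $e''$ plays the role of the ``newly excluded'' edge and its equation is absorbed into the cusp relation of the graph $\tilde G$. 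The sum in $I_{\tilde\CT}(\varpi)$ then decomposes into an outer sum over the original lattice $\Lambda$ and an inner sum over $(k_{e'}, k_{\tilde e}) \in \BZ^2$.

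Second, I would read off the arguments of the two new $\JD$-factors from Figure \ref{0-2move_labelled}. Because $\tilde e$ has degree $2$ and each of $e', e''$ has exactly one corner in each of $T_{N+1}, T_{N+2}$, and because the four external faces of $T_{N+1} \cup T_{N+2}$ are glued in pairs to faces of $\CT$ whose $\overline{\mb A}, \overline{\mb B}, \overline{\mb C}$ data are untouched, one checks that the combined contribution of the two new tetrahedra to \eqref{JD_index_eq} reduces to
\[
(-q^{1/2})^{\alpha}\, q^{k_{\tilde e}}\, \JD(a, b, c)\,\JD\bigl(a + (k_{e'} - k_{e''}),\, b,\, c\bigr)
\]
for an explicit shift $\alpha$ and explicit entries $a, b, c$ that depend only on the outer $\kk$-variables (and on $\varpi$). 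Here the symmetry \eqref{eq.JD_add_tet_sol} under a common shift of the three arguments of $\JD$ is used to align the ``$b$-'' and ``$c$-''columns for $T_{N+1}$ and $T_{N+2}$. Eliminating $k_{e''}$ via the cusp relation in $\tilde G$ leaves precisely an expression of the form $\sum_{a}\JD(a,b,c)\JD(a+x,b,c)\, q^a$.

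Third, I would sum first over the inner variable coming from $\tilde e$ and apply \eqref{eq.JD_quad_id}; this produces a factor $\delta_{x,0}$ that forces the remaining inner variable to match the contribution coming from the outer $\kk$, and the outer sum reduces to the formula \eqref{JD_index_eq} for $I_\CT(\varpi)$. The main obstacle is Step 2: verifying that, after the choices of basic edges and of coordinates, the pair of arguments of the two new $\JD$-factors really does have the matched ``$b, c$'' structure and opposite ``$a$-shift'' required by \eqref{eq.JD_quad_id}. This is a careful piece of bookkeeping with the turning numbers of Figure \ref{0-2move_labelled} and the edge equations of $e'$, $e''$, $\tilde e$, but once it is done, the quadratic identity delivers $I_{\tilde\CT}(\varpi) = I_\CT(\varpi)$ directly.
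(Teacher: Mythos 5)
Your proposal follows essentially the same route as the paper's proof: choose a set of basic/excluded edges with $\tilde e$ basic, normalize the peripheral curve, and then collapse the inner sum over the new variables via the quadratic identity \eqref{eq.JD_quad_id}. The overall structure and the key identity used are identical.

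Two small imprecisions are worth flagging, neither fatal. First, you assert that $\varpi$ can be isotoped off all cusp triangles created by the 0--2 move, so that the new columns of the turning-number vector are identically zero; this is stronger than what one can always arrange. The paper instead takes $\tilde\varpi$ to pass "straight through" the pair of added triangles (Figure 13), yielding $\bar a_{\tilde\varpi,j}=0$ but in general $\bar b_{\tilde\varpi,1}=\bar c_{\tilde\varpi,2}=x$ and $\bar b_{\tilde\varpi,2}=\bar c_{\tilde\varpi,1}=y$ with $x,y$ possibly nonzero. Conveniently, this does not break your argument: the $x,y$ contributions land symmetrically in the $b,c$ slots of the two $\JD$-factors, and \eqref{eq.JD_quad_id} only requires agreement of the $b,c$ slots, so the delta function emerges just as in your sketch. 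Second, your choice of excluded edge ($e''$ for $\tilde\CT$, implicitly with $e$ excluded for $\CT$) covers only one of the two cases treated in the paper (the paper handles $e\notin\mathcal{X}$ and $e\in\mathcal{X}$ separately). Your phrasing "eliminating $k_{e''}$ via the cusp relation in $\tilde G$" is slightly off — $k_{e''}$ is set to zero by excluding $e''$, not expressed in terms of other variables — but by Remark \ref{basis_indept} you are free to fix a convenient excluded set, so making the assumption that $e$ is excluded in $\CT$ explicit would close this gap. With those two points tightened, your calculation reproduces the paper's argument.
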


\begin{proof}
Our assumptions imply that both $I_{\CT}$ and $I_{\wt\CT}$ exist.
We now compare these indices using the alternative definition \eqref{JD_index_eq} and
quadratic identity \eqref{eq.JD_quad_id}.  

We use the labelling of the two bigons and triangles on $\wt\CT$ shown
in Figure \ref{0-2move_labelled}. 
Let $T_i$ for $i=3,\dots,N+2$ denote the tetrahedra in $\CT$,
and let $T_1, T_2$ be the additional tetrahedra added in $\ti\CT$. 
Note that the edge $e$ in $\CT$ splits into two edges $e',e''$ in $\wt\CT$, and there is another new edge $\wt e$ in $\wt\CT$. We abuse notation by identifying the symbols for the corresponding remaining edges in $\CT$ and $\wt\CT$. We denote these as $e_1, \ldots, e_{N-1}$.

Let $\wt\kk \in \Z^{N+2}$ be a weight function on
the edges of $\wt\CT$ and write $\ti\kk= (k',k'',\wt k, k_1, \ldots, k_{N-1})$
where  $k',k'',\wt k, k_i$ are the values of $\wt\kk$ on $e',e'', \wt e$ and $e_i$ respectively. 
Similarly, let $\kk = (k, k_1, \ldots, k_{N-1})\in\Z^N$ be a corresponding weight function on $\CT$.
We choose label $\bar a_j$ on the edge $\ti e$ on tetrahedron $T_j$ for $j=1,2$;
then the location of labels $\bar b_j,\bar c_j$ are determined using the orientation on $M$. 

Let $\varpi$ be an oriented multi-curve which is normal with respect to the triangulation $\CT_\bd$
of $\bd M$ induced by $\CT$, and let $\ti\varpi$ an oriented multi-curve which is normal
with respect to $\wt\CT_\bd$ and represents the same homology class $[\varpi]\in H_1(\bd M;\Z)$.
Let  $J(T_j,\kk,\varpi) = \JD(\bar a_j(\kk,\varpi), \bar b_j(\kk,\varpi), \bar c_j(\kk,\varpi))$ denote the contribution of tetrahedron $T_j$
to the index with weight function $\kk$ on its edges and peripheral curve $\varpi$ on its truncated ends, and similarly let $J(T_j, \ti \kk,\ti\varpi)$ be contribution with weight function $\ti \kk$ and peripheral curve $\ti\varpi$.

To compute $I_{\CT}$ we use Theorem \ref{pick_edges} to choose an excluded set $\mathcal X$ of $r$ edges
in a maximal tree with 1- or 3-cycle in $\CT$ 
to be omitted from the summation in \eqref{JD_index_eq}.   

Case 1: If $e \notin \mathcal X$ we can order the edges of $\CT$ so
that $\mathcal X = \{ e_{N-r}, \ldots, e_{N-2}, e_{N-1} \}$. 
Then we can compute $I_{\wt\CT}$ by omitting the same edge set $\widetilde{\mathcal X} = \mathcal X$.

Case 2: If $e \in \mathcal X$  we can order the edges so $\mathcal X = \{ e, e_{N-r+1}, \ldots, e_{N-2}, e_{N-1} \}$.
Then we can compute $I_{\wt\CT}$ by omitting the edge set 
$\widetilde{\mathcal X} =  \{ e'', e_{N-r+1}, \ldots, e_{N-2}, e_{N-1} \}$.

Then
\be
\label{eq.JD_index_sum1}
I_{\wt\CT} (\ti\varpi)= \sum_{\wt\kk \in \wt S} \ 
q^{k' + k'' + \ti k + \sum_{i=1}^{N-1} k_i } \,
J(T_1,\ti\kk,\ti\varpi) J(T_2,\ti\kk,\ti\varpi) \prod_{j=3}^{N+2} J(T_j, \ti \kk,\ti\varpi),
\ee
where $$\wt S = \left\{ \wt\kk = (k',k'',\wt k, k_1, \ldots, k_{N-1}) \in \Z^{N+2} : k_{N-i}=0 \text { for } i=1, \ldots, r  \right\} \text{  in case 1}$$
and 
$$\wt S = \left\{ \wt\kk = (k',k'',\wt k, k_1, \ldots, k_{N-1})\in \Z^{N+2} : k''=0, k_{N-i}=0 \text { for } i=1, \ldots, r-1 \right\} \text{  in case 2}.$$
Note that in both cases, the new edge $\tilde e$ is {\em included} in the set of basic edges so $\tilde k$
varies over $\Z$ in the sum.

Now we look at the contribution to 
$I_{\wt\CT}$ coming from the tetrahedra $T_1,T_2$ and summed over the weight $\wt k$ on $\tilde e$,  namely
\be
\label{tilde_k_sum}
\sum_{\wt k \in \Z}
q^{\wt k}
J( T_1; \ti\kk,\ti\varpi)  J( T_2; \ti\kk,\ti\varpi)
\ee
where
$$J( T_1; \ti\kk,\ti\varpi)=\JD(k' + \wt k+\bar a_{\ti\varpi,1}, k_2+k_3+\bar b_{\ti\varpi,1},k_1+k_4+\bar c_{\ti\varpi,1})$$
and
$$J( T_2; \ti\kk,\ti\varpi)=\JD(k'' + \wt k +\bar a_{\ti\varpi,2},  k_1+k_4+\bar b_{\ti\varpi,2},k_2+k_3+\bar c_{\ti\varpi,2}).$$

Recall that  $\varpi$ be an oriented multi-curve which is normal with respect to the triangulation $\CT_\bd$
of $\bd M$ induced by $\CT$. 
Since the index only depends on the homology class of a peripheral curve,
we can calculate $I_{\wt\CT}([\varpi])$ by using for $\wt\varpi$ a corresponding curve on $\bd M$ which is
normal with respect to $\wt\CT_\bd$ and goes ``straight through'' each pair of added triangles on $\bd M$.
See Figure \ref{0-2_bdry_curve_changes}.  

\begin{figure}[htb]
\centering
\labellist
\pinlabel $\bar a_2$ at 93 29
\pinlabel $\bar a_1$ at 118 29
\pinlabel $\bar c_2$ at 101 42
\pinlabel $\bar b_2$ at 101 16
\pinlabel $\bar b_1$ at 110 42
\pinlabel $\bar c_1$ at 110 16

\pinlabel $\bar a_2$ at 105.5 88
\pinlabel $\bar c_2$ at  92.5 88
\pinlabel $\bar b_2$ at 118.5 88
\pinlabel $\bar a_1$ at 105.5 77.4
\pinlabel $\bar b_1$ at 92.5 77.4
\pinlabel $\bar c_1$ at 118.5 77.4
\endlabellist
\includegraphics[width=10cm]{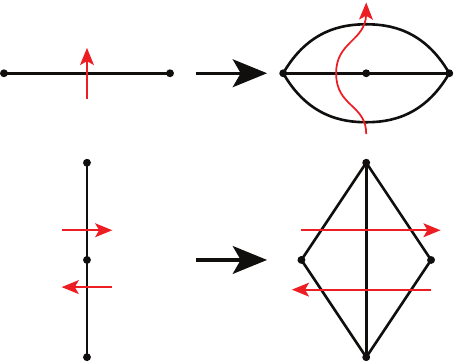}
\caption{Changes in peripheral curve from $\varpi$ in $\CT_\bd$  to $\ti\varpi$ in $\ti\CT_\bd$. Note that the curve in the top diagram
could go either way around the new vertex. }
\label{0-2_bdry_curve_changes}
\end{figure}

Then we have 
$$
\bar a_{\ti\varpi,1}=\bar a_{\ti\varpi,2}=0,\quad
\bar b_{\ti\varpi,1} = \bar c_{\ti\varpi,2} = x,\quad \bar b_{\ti\varpi,2} = \bar c_{\ti\varpi,1} = y,
$$
for some $x,y \in \Z$,
and 
$$\bar a_{\ti\varpi,j}=\bar a_{\varpi,j},~~ \bar b_{\ti\varpi,j}=\bar b_{\varpi,j},~~\bar c_{\ti\varpi,j}=\bar c_{\varpi,j} \text{ for } j = 3, \ldots, N+2.$$

Using the invariance of $\JD$ under all permutations of its arguments and the quadratic identity \eqref{eq.JD_quad_id},  
the sum \eqref{tilde_k_sum} becomes 
$$
\sum_{\wt k \in \Z}  q^{\wt k}
\JD(k' + \wt k, k_2+k_3+x,k_1+k_4+y)
\JD(k'' + \wt k ,  k_1+k_4+y,k_2+k_3+x)
= q^{-k'} \,\delta_{k',k''}.
$$
This means that in the sum \eqref{eq.JD_index_sum1} 
we can remove the summation over $\ti\kk$ and put $k' = k'' =k$. 
Hence $J(T_j, \ti \kk,\ti\varpi) = J(T_j, \kk,\varpi)$ for $j=3, \ldots, N+2$, and 
$$
I_{\wt\CT} (\ti\varpi)= \sum_{\kk \in S} \ 
q^{k + \sum_{i=1}^{N-1} k_i } \,
 \prod_{j=3}^{N+2} J(T_j, \kk,\varpi)= I_{\CT} (\varpi),$$
 where 
 $$S = \left\{ \kk=(k, k_1, \ldots, k_{N-1})  \in \Z^{N} : k_{N-i}=0 \text { for } i=1, \ldots, r \right\} \text{  in case 1}$$
 and 
 $$S = \left\{ \kk= (k, k_1, \ldots, k_{N-1}) \in \Z^{N} : k=0, k_{N-i}=0 \text { for } i=1, \ldots, r-1 \right\} \text{  in case 2}.$$
This completes the proof of invariance of the index under the 0--2 move.
\end{proof}


\section{The $\calX_M^{\EP}$ class of triangulations}
\label{sec.XEPM}

\subsection{Subdivisions of the Epstein--Penner decomposition}

For a once-cusped hyperbolic 3--manifold $M$, the Epstein--Penner 
decomposition (see \cite{EP}) divides $M$ into a finite number of ideal 
hyperbolic polyhedra. This subdivision is canonical, depending only on 
the topology of the manifold, if $M$ has a single cusp.
If $M$ has $r \geq 1$ cusps, then the Epstein-Penner cell decomposition
is canonical up to the choice of a scale vector $(t_1,\dots,t_r)$
with $t_1, t_2, \dots, t_r >0$
giving the relative size of the cusps. The scale vector is well-defined up
to multiplication by a positive real number. 
For the purposes of defining our canonical set, we can choose all $t_i$ to be 
the same. 

Very often, the cells of the decomposition are all ideal tetrahedra, but other polyhedra can occur. For many applications, including the use in this paper, we need a subdivision of $M$ into ideal tetrahedra only. It is well known that every cusped 3--manifold has a decomposition into ideal tetrahedra, but one often needs more than a purely topological structure on the tetrahedra. The Epstein--Penner decomposition, coming as it does with a geometric structure, provides all of the nice geometric properties one could want. So, in the cases when the cells of the decomposition are not themselves tetrahedra, we would like to further subdivide the polyhedra into tetrahedra. However, there is no canonical way to subdivide, and it is not even clear if one can subdivide the various polyhedra in a consistent way, so that the triangulations induced on the faces of the polyhedra match when the polyhedra are glued to each other. In particular, it is still unknown whether every cusped hyperbolic 3--manifold admits a \emph{geometric triangulation} (that is, a subdivision into positive volume ideal hyperbolic tetrahedra), either constructed by further subdividing the Epstein--Penner decomposition or otherwise.

However, one can use the Epstein--Penner decomposition to produce an ideal triangulation by subdividing the ideal polyhedra, if we also allow flat tetrahedra inserted between faces of the polyhedra to bridge between incompatible triangulations of those faces. Such an ideal triangulation has a \emph{natural semi-angle structure} (see Remark \ref{rem:natural semi-angle structure}), and so by Theorem \ref{semi-angle => 1-efficient} all of these triangulations are 1--efficient.

To describe our triangulations more precisely, we use the same notation as in \cite{HRS}.

\begin{defn}
In this paper, the term \emph{polyhedron} will mean a combinatorial object obtained by removing all of the vertices from a 3-cell with a given combinatorial cell decomposition of its boundary. We further require that this can be realised as a positive volume convex ideal polyhedron in hyperbolic 3-space $\H^3$.
\end{defn}
\begin{defn}
An (ideal) \emph{polygonal pillow} or  \emph{$n$-gonal pillow} is a combinatorial object obtained by removing all of the vertices from a 3-cell with a combinatorial cell decomposition of its boundary that has precisely two faces. The two faces are copies of an $n$-gon identified along corresponding edges.
\end{defn}
\begin{defn}
Suppose that $\mathcal{P}$ is a cellulation of a 3-manifold consisting of polyhedra 
and polygonal pillows with the property that polyhedra are glued to either polyhedra or polygonal pillows, but polygonal pillows are only glued to polyhedra. Then we call $\mathcal{P}$ a \emph{ polyhedron and polygonal pillow cellulation}, or for short, a \emph{ PPP-cellulation}. 
\end{defn}
\begin{defn}\label{diagonal flip}
Let $t$ be a triangulation of a polygon. A \emph{diagonal flip} move changes $t$ as follows. First we remove an internal edge of $t$, producing a four sided polygon, one of whose diagonals is the removed edge. Second, we add in the other diagonal, cutting the polygon into two triangles and giving a new triangulation of the polygon.
\end{defn}

\begin{defn}\label{layered_triang_of_polygonal_pillow}
Let $Q$ be a polygonal pillow, with triangulations $t_-$ and $t_+$ given on its two polygonal faces $Q_-$ and $Q_+$. By a \emph{layered triangulation of $Q$, bridging between $t_-$ and $t_+$}, we mean a triangulation produced as follows. We are given a sequence of diagonal flips which convert $t_-$ into $t_+$. This gives a sequence of triangulations $t_-=L_1, L_2, \ldots, L_k=t_+$, where consecutive triangulations are related by a diagonal flip. Starting from $Q_-$ with the triangulation $t_-=L_1$, we glue a tetrahedron onto the triangulation $L_1$ so that two of its faces cover the faces of $L_1$ involved in the first diagonal flip. The other two faces together with the rest of $L_1$ produce the triangulation $L_2$. We continue in this fashion, adding one tetrahedron for each diagonal flip until we reach $L_k=t_+$, which we identify with $Q_+$.
\end{defn}

Our class of triangulations $\calX_M^{\EP}$ of $M$ consists of triangulations that are subdivisions of PPP-cellulations. Our PPP-cellulation will have polyhedra being the polyhedra of the Epstein-Penner decomposition. It also has a polygonal pillow inserted between all pairs of identified faces that have at least 4 sides. We will form our triangulations by first subdividing the ideal hyperbolic polyhedra into positive volume ideal hyperbolic tetrahedra. Secondly, for each polygonal pillow, we insert any layered triangulation that bridges between the induced triangulations of the two boundary polygons of the polyhedra to each side.

\begin{rmk}\label{rem:natural semi-angle structure}
Any triangulation produced in the way described above has a \emph{natural semi--angle structure}. This comes from the shapes of the tetrahedra as ideal hyperbolic tetrahedra. The dihedral angles of the positive volume ideal hyperbolic tetrahedra, together with 0 and $\pi$ angles for the flat tetrahedra in the layered triangulations in the polygonal pillows satisfy all of the rules for a generalised angle structure, and all angles are in $[0,\pi]$. 
\end{rmk}

The natural semi--angle structure together with Theorem \ref{semi-angle => 1-efficient} show that each triangulation in our class is 1-efficient. However, we also need to show that our class is connected under 2--3, 3--2, 0--2 and 2--0 moves, and for this we will need some extra machinery. The main tool we will use is the theory of regular triangulations of point configurations, following \cite{DeLoera:book}.

\subsection{Regular triangulations}

The concept of a regular triangulation comes from the study of triangulations of convex polytopes in $\R^n$. Here we are not dealing with topological triangulations, where tetrahedra may have self-identifications or two vertices may have multiple edges connecting them. Rather, the vertices are concrete points in $\R^n$, the edges are straight line segments in $\R^n$ and so on. In this context, a triangulation of a convex polytope is a subdivision of the polytope into concrete Euclidean simplices. Roughly speaking, a triangulation of a polytope in $\R^n$ is \emph{regular} if it is isomorphic to the lower faces of a convex polytope in $\R^{n+1}$. The following series of definitions make this idea precise.

\begin{defn}
An \emph{affine combination} of a set of points $(\bp_j)_{j\in C}$ in $\R^n$ is a sum $\sum_{j\in C} \lambda_j \bp_j$ where $ \sum_{j\in C} \lambda_j =1.$ A set of points is \emph{affinely independent} if none of them is an affine combination of the others. A \emph{$k$--simplex} is the convex hull of an affinely independent set of $k+1$ points.
\end{defn}
\begin{defn}
A \emph{point configuration} is a finite set of labelled points in $\R^n$. Let $\bA = (\bp_j)_{j\in J}$ be a point configuration with label set $J$. For $C\subset J$, the \emph{affine span of $C$ in $\bA$} is the set of affine combinations of the set of points labelled by $C$. The \emph{dimension of $C$} is the dimension of the affine span of $C$. The \emph{convex hull of $C$ in $\bA$} is the convex hull in $\R^n$ of the set of points labelled by $C$. 
\[ 
\text{conv}_\bA(C):=\left\{ \sum_{j\in C} \lambda_j \bp_j \mid \lambda_j\geq0 \text{ for all } j \in C, \text{ and } \sum_{j\in C} \lambda_j =1\right\}
\]
The \emph{relative interior of $C$ in $\bA$} is the interior of the convex hull in its affine span. 
\[ 
\text{relint}_\bA(C):=\left\{ \sum_{j\in C} \lambda_j \bp_j \mid \lambda_j >0 \text{ for all } j \in C, \text{ and } \sum_{j\in C} \lambda_j =1\right\}
\]
\end{defn}
\begin{defn}
With the above notation, if $\psi\in(\R^n)^*$ is a linear functional, then the \emph{face of $C$ in direction $\psi$} is the subset of $C$ given by
\[ 
\text{face}_\bA(C, \psi):=\left\{ j\in C \mid \psi(\bp_j)=\max_{b\in C}(\psi(\bp_b)) \right\}
\]
If $F$ is a face of $C$, we write $F\leq_\bA C$ and if in addition $F\neq C$ we write $F<_\bA C$.
\end{defn}
\begin{defn}
With the above notation, a collection $\mathcal{S}$ of subsets of $J$  is a \emph{polyhedral subdivision of $\bA$} if it satisfies the following conditions. (The elements of $\mathcal{S}$ are called \emph{cells}.)
\begin{enumerate}
\item If $C\in\mathcal{S}$ and $F\leq C$ then $F\in \mathcal{S}$. 
\item $\cup_{C\in\mathcal{S}} \text{conv}_\bA(C) \supset \text{conv}_\bA(J)$.
\item If $C\neq C'$ are two cells in $\mathcal{S}$ then $\text{relint}_\bA(C)\cap\text{relint}_\bA(C')=\emptyset$.
\end{enumerate}  
\end{defn}
\begin{rmk}
The first condition says that if some cell is in our subdivision then all faces of it are also. The second condition says that it is a subdivision of the whole convex hull of the points in $\bA$. The third condition says that the cells can only overlap with each other on their faces, not their interiors.
\end{rmk}

\begin{defn}\label{point config triangulation}
With the above notation, a \emph{triangulation of $\bA$} is a polyhedral subdivision of $\bA$ such that every cell is a simplex.
\end{defn}

\begin{defn}
Let $\bA = (\bp_j)_{j\in J}$ be a point configuration with label set $J$. Suppose that $\omega\co \bA\to\R$ is any map. The \emph{lifted point configuration} in $\R^{n+1}$ is the point configuration 
$$\bA^\omega = (\bp^\omega_j)_{j\in J} := (\bp_j, \omega(\bp_j))_{j\in J} $$
(again with label set $J$) given by adjoining to each vector an $n+1$th coordinate given by the value of $\omega$ at that point. Consider the set of faces of $\text{conv}_{\bA^\omega}(J)$. A \emph{lower face} of this convex hull is a face that is ``visible from below''. That is, $\text{face}_{\bA^\omega}(J, \psi)$ is a lower face if $\psi$ is negative 
on the last coordinate. 
\end{defn}
\begin{defn}
The \emph{regular polyhedral subdivision of $\bA$ produced by $\omega$}, denoted $\mathcal{S}(\bA,\omega)$, is the set of lower faces of the point configuration $\bA^\omega$. 
\end{defn}
Note that a face in these definitions is a set of labels for the point configuration. So the set of faces making up the polyhedral subdivision of $\bA$ is defined in terms of $\bA^\omega$, but this works because the same set of labels is used for the two point configurations. Lemma 2.3.11 of 
\cite{DeLoera:book} shows that $\mathcal{S}(\bA,\omega)$ is indeed a polyhedral subdivision of $\bA$, for every $\omega$.

\begin{defn}
A \emph{regular triangulation of $\bA$} is a regular polyhedral subdivision of $\bA$ that is a triangulation of $\bA$. 
\end{defn}

Proposition 2.2.4 of \cite{DeLoera:book} shows that every point configuration has a regular triangulation. The connection between regular triangulations of point configurations and our situation can be made via the Klein model of $\H^3$. In this model, geodesics are represented as straight lines in Euclidean space, $\E^3$. A convex ideal hyperbolic polyhedron is represented as a convex Euclidean polyhedron whose vertices lie on a sphere. Such a Euclidean polyhedron can be seen as a point configuration, with the points consisting of the vertices of the polyhedron. Note that this is a more restrictive situation than the full generality discussed in \cite{DeLoera:book} -- since all the vertices lie on a sphere, there can be no internal points, and no three points can lie on a line. These observations imply that the set of subdivisions of a convex ideal hyperbolic polyhedron into (strictly positive) volume ideal hyperbolic tetrahedra is in one-to-one correspondence with the set of triangulations of the convex Euclidean polyhedron, in the sense of Definition \ref{point config triangulation}. The bijective map between the two sets preserves the combinatorial structure of the triangulations.

\begin{defn}
Given the above discussion, we define a \emph{regular ideal triangulation} of a convex ideal hyperbolic polyhedron to be an ideal triangulation of the polyhedron whose corresponding Euclidean triangulation of the corresponding convex Euclidean polyhedron is regular.\footnote{Note that this definition has no relation to the definition of a regular ideal hyperbolic tetrahedron, in the sense of a tetrahedron with all dihedral angles being $\pi/3$. } 
\end{defn}
We are now  in a position to be able to define our class of triangulations $\calX_M^{\EP}$. 

\begin{defn}
Let $M$ be a cusped hyperbolic 3--manifold. Let $\mathcal{P}$ be the PPP--cellulation of $M$ derived from the Epstein--Penner decomposition of $M$ (choosing equal volumes for the cusps if there is more than one) by inserting polygonal pillows between any two non-triangular faces of the decomposition. The class of triangulations $\calX_M^{\EP}$ consists of all triangulations constructed via the following method:
\begin{enumerate}
\item Insert a regular ideal triangulation of each polyhedron $P$ of $\mathcal{P}$ into $P$.
\item Each polygonal pillow $Q$ has two (not necessarily distinct) polyhedra $P_-$ and $P_+$ glued to it. The regular ideal triangulations of $P_-$ and $P_+$ induce triangulations $t_-$ and $t_+$ of the two polygonal faces of $Q$. Insert into $Q$ a layered triangulation of $Q$, bridging between $t_-$ and $t_+$.
\end{enumerate}
\end{defn}
Note that although there are only finitely many regular ideal triangulations of a given convex ideal polyhedron, there may be infinitely many triangulations in $\calX_M^{\EP}$, since the layered triangulations can be arbitrarily long.

\begin{rmk}\label{Xbar^EP 1-efficient}
From the geometric construction, every triangulation of $\calX_M^{\EP}$ has a natural semi-angle structure, as in Remark \ref{rem:natural semi-angle structure}, so they are all 1-efficient by Theorem \ref{semi-angle => 1-efficient}.
\end{rmk}

\begin{defn}
The \emph{corank} of a $d$--dimensional point configuration with $n$ points is the number $n-d-1$.
\end{defn}
A point configuration has corank \emph{zero} if and only if it is affinely independent. A point configuration has corank \emph{one} if and only if it has a unique affine dependence relation. This means that there is a unique solution to 
\[
\sum_{j\in J} \lambda_j \bp_j =0 \text{ with } \sum_{j\in J} \lambda_j =0, \text{ where at least one } \lambda_j\neq0.
\]
 Uniqueness is up to scaling all $\lambda$'s by the same factor. The affine dependence divides $J$ into three subsets:
\[
J_+ := \{j\in J \mid \lambda_j>0\}, \quad J_0 := \{j\in J \mid \lambda_j=0\}, \quad J_- := \{j\in J \mid \lambda_j<0\}.
\] 
(Which is which of $J_+$ and $J_-$ is not well defined since we can multiply all of the coefficients by $-1$ to swap them.) Then $\text{relint}_\bA(J_+)\cap\text{relint}_\bA(J_-)$ is a single point, given by 
\[\sum_{j\in J_+} \lambda_j \bp_j = \sum_{j\in J_-} -\lambda_j \bp_j,
\]
where we have normalised the $\lambda$'s so that  
\[
\sum_{j\in J_+} \lambda_j = \sum_{j\in J_-} -\lambda_j  = 1.
\]

\begin{defn}
Let $\bA = (\bp_j)_{j\in J}$ be a point configuration with label set $J$. A subset of $J$ is called a \emph{circuit}, $Z$, if it is a minimal affinely dependent set (i.e. it is dependent, but every proper subset is independent).
\end{defn}
In the above discussion, $Z=J_+\cup J_-$, and $Z$ is partitioned into the two sets, $Z_+=J_+$ and $Z_-=J_-$ since if $\lambda_j=0$ in the affine dependence then it could be removed from $Z$, contradicting minimality. \\ 

In $\R^3$, five points in general position are a circuit, but there may be circuits with fewer points. Four points are a circuit if they lie in a plane, three if they lie on a line, and two if they are coincident. However, for our purposes the points are the vertices of a convex Euclidean polyhedron, so we may assume that there are no repeated points. Moreover, the points lie on a sphere, so no three lie on a line. Therefore, the only two possibilities are five points in general position, or four points that lie on a plane, as shown in Figure \ref{circuits}.

\begin{figure}[htb]
\centering
\labellist
\pinlabel $-$ at 0 106
\pinlabel $+$ at 8 19
\pinlabel $-$ at 136 38
\pinlabel $+$ at 109 119

\pinlabel $+$ at 195 77
\pinlabel $+$ at 319 94
\pinlabel $+$ at 309 22
\pinlabel $-$ at 260 4
\pinlabel $-$ at 260 137

\endlabellist
\includegraphics[width=0.6\textwidth]{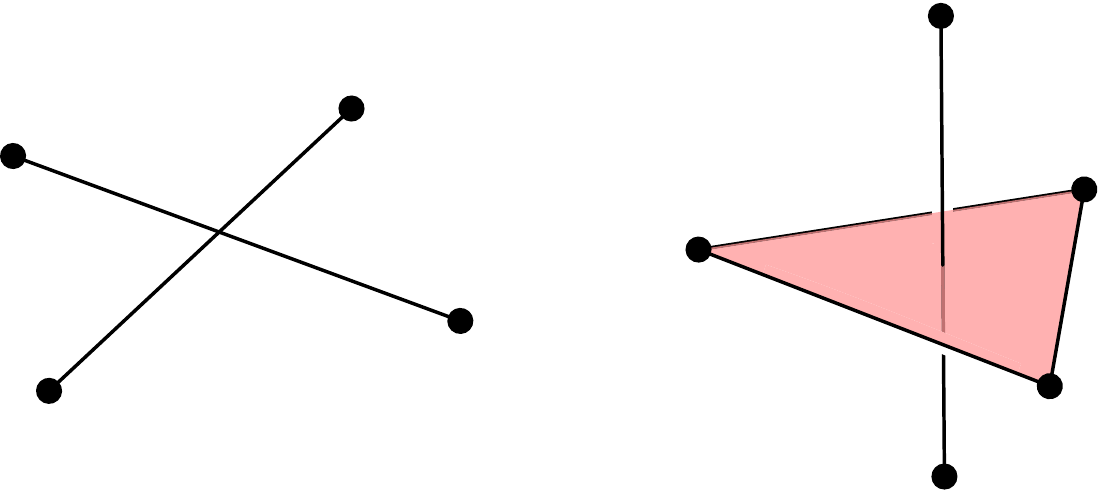}
\caption{Circuits with 4 and 5 elements.}
\label{circuits}
\end{figure}

\begin{rmk}\label{our corank one configs}
For us then, the only possible corank one configurations are 
\begin{enumerate}
\item five points in general position, or \label{corank 1 5}
\item four points in a plane, or  \label{corank 1 4}
\item four points in a plane plus one point not in that plane. \label{corank 1 4+1}
\end{enumerate}
\end{rmk}

\begin{defn}\label{def:almost-triangulation}
Let $\mathcal{S}$ be a polyhedral subdivision that is not a triangulation. Then $\mathcal{S}$ is an \emph{almost--triangulation} if
\begin{enumerate}
\item all of the cells of $\mathcal{S}$ have corank at most one, and
\item all of the cells of $\mathcal{S}$ of corank one contain the same circuit.
\end{enumerate}
\end{defn}

\begin{lemma}\label{lem:almost-triangulations}
In our case, the 3--cells of an almost--triangulation are all simplices apart from one or two 3--cells. These 3--cells can have the following forms:
\begin{enumerate}
\item The convex hull of five points on a sphere, in general position,  as in the upper diagram of Figure \ref{flip_2-3}. \label{a-t_flip_2-3}
\item A 4--sided pyramid, with the base of the pyramid on a boundary face of the polyhedron, as in the upper diagram of Figure \ref{flip_2-2_external}. \label{a-t_flip_2-2_external}
\item Two 4--sided pyramids whose bases are coincident, as in the upper diagram of  Figure \ref{flip_2-2_internal}. \label{a-t_flip_2-2_internal}  
\end{enumerate}
\end{lemma}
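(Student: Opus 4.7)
The plan is to classify the non-simplicial 3-cells directly from Definition \ref{def:almost-triangulation}, then case-split on the common circuit $Z$. First I would observe that a 3-cell of $\mathcal S$ is simplicial exactly when it has corank zero, so the non-simplicial 3-cells of $\mathcal S$ are precisely its 3-cells of corank at least one; by the almost-triangulation hypothesis each of these has corank exactly one and contains the common circuit $Z$. Applying Remark \ref{our corank one configs}, the circuit $Z$ itself must be either (i) five vertices in general position on the sphere, or (ii) four coplanar vertices on the sphere — the ``four coplanar plus one apex'' configuration listed there is corank one but is not a circuit, since the four coplanar points already form a smaller affinely dependent subset.

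In case (i), $Z$ already spans $\R^3$ and a corank-one cell containing $Z$ can have no further vertex, since any extra point on the sphere would produce a second independent affine relation and raise the corank above one. Hence $Z$ itself is the unique non-simplicial 3-cell, which is the convex hull of five points on the sphere — case \ref{a-t_flip_2-3} of the lemma. In case (ii), $Z$ is a planar quadrilateral and any 3-cell containing $Z$ must pick up exactly one apex off the plane of $Z$ (the corank-one constraint forbids more). So every non-simplicial 3-cell is a 4-sided pyramid with base $Z$. Whether one or two such pyramids occur is determined by the location of $\mathrm{conv}(Z)$ in the ambient polyhedron $P$ of the PPP-cellulation: if $\mathrm{conv}(Z)$ lies on a boundary face of $P$ then only the interior side must be covered, giving a single pyramid (case \ref{a-t_flip_2-2_external}); if $\mathrm{conv}(Z)$ is interior to $P$, then $\mathcal S$ must cover both sides of the plane of $Z$, yielding two pyramids with coincident bases (case \ref{a-t_flip_2-2_internal}).

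The main obstacle I expect is the bookkeeping in case (ii) when $\mathrm{conv}(Z)$ is interior to $P$: one must rule out a more intricate cell structure on the two sides of the plane of $Z$, for example a pyramid further subdivided by an extra edge or a stacked pair of cells sharing $Z$. This is handled cleanly by the corank-one restriction together with the fact that all vertices of $\mathcal S$ lie on the Klein sphere: any 3-cell of $\mathcal S$ containing $Z$ must have $|Z|+1=5$ vertices on the sphere, and so consists of $Z$ together with a single apex, forcing it to be a single 4-sided pyramid as claimed.
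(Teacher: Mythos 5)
Your proposal is correct and takes essentially the same approach as the paper: the paper's own proof is the single line ``This follows immediately from Definition \ref{def:almost-triangulation} and Remark \ref{our corank one configs},'' and your argument just supplies the routine case analysis (circuit of size 5 versus circuit of size 4, and boundary versus interior location of the circuit's convex hull in the latter case) that makes the ``immediately'' explicit.
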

\begin{proof}
This follows immediately from Definition \ref{def:almost-triangulation} and Remark \ref{our corank one configs}. 
\end{proof}

\begin{figure}[htb]
\centering

\labellist
\small
\pinlabel refinement at 60 156
\pinlabel refinement at 225 156
\pinlabel flip at 139 86
\endlabellist

\includegraphics[width=0.44\textwidth]{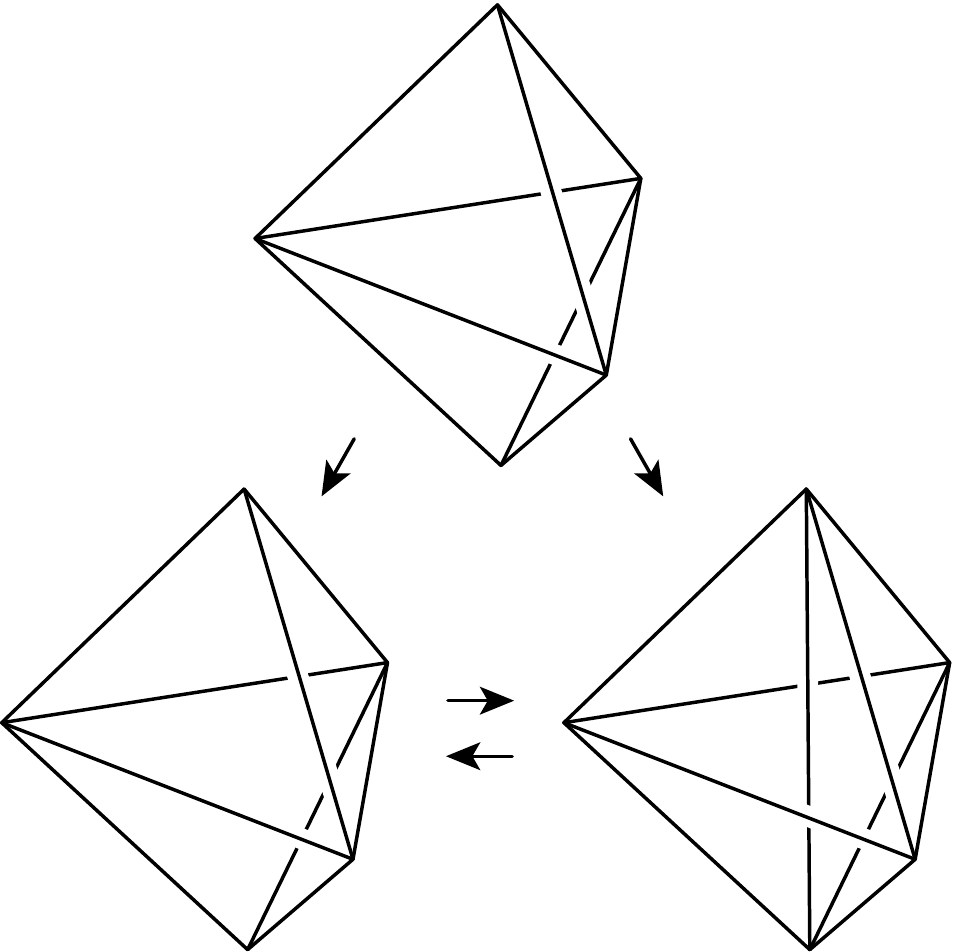}
\caption{The almost--triangulation in case (\ref{a-t_flip_2-3}) of Lemma \ref{lem:almost-triangulations}, and its two refinements to triangulations. Note that although the top and lower left pictures are identical, the top is to be interpreted as a single 3--cell, while the lower left shows two tetrahedra meeting in a triangle. The associated flip between the two triangulations is a 2--3 flip.}
\label{flip_2-3}
\end{figure}

\begin{figure}[htb]
\centering

\subfloat[The almost--triangulation in case (\ref{a-t_flip_2-2_external}) of Lemma \ref{lem:almost-triangulations}, and its two refinements to triangulations.]{

\labellist
\small
\pinlabel refinement at 48 114
\pinlabel refinement at 207 114
\pinlabel flip at 123 54
\pinlabel $v_1$ at 72 187
\pinlabel $v_2$ at 80 129
\pinlabel $v_3$ at 191 145
\pinlabel $v_4$ at 167 196

\endlabellist

\includegraphics[width=0.46\textwidth]{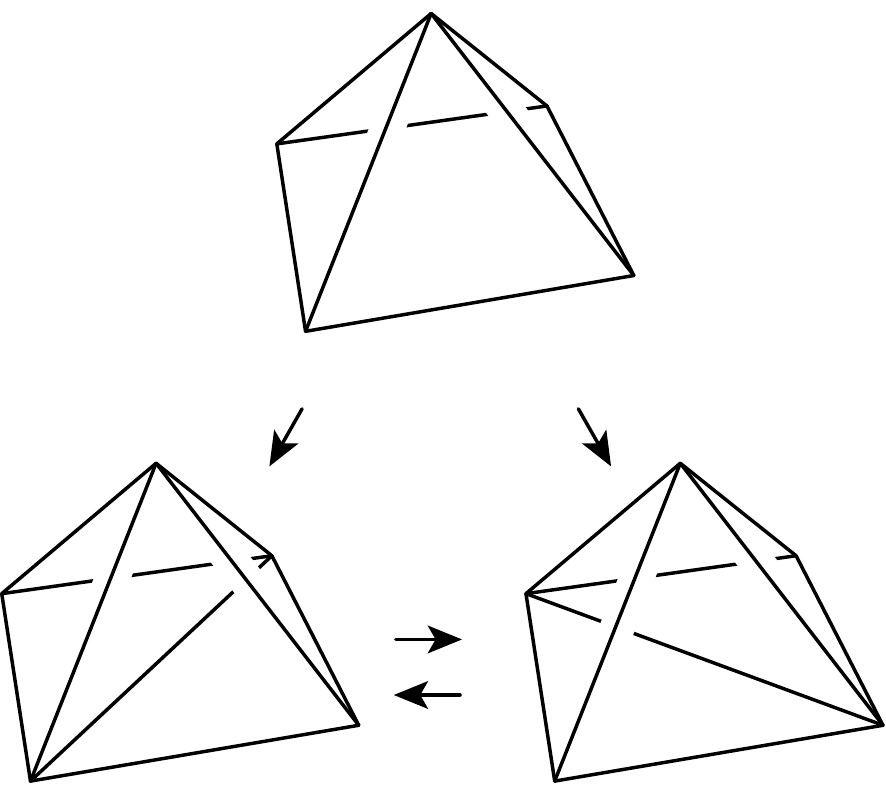}
\label{flip_2-2_external}}
\quad
\subfloat[The almost--triangulation in case (\ref{a-t_flip_2-2_internal}) of Lemma \ref{lem:almost-triangulations}, and its two refinements to triangulations.]{

\labellist
\small
\pinlabel refinement at 48 121
\pinlabel refinement at 207 121
\pinlabel flip at 123 61
\pinlabel $v_1$ at 72 194
\pinlabel $v_2$ at 80 136
\pinlabel $v_3$ at 191 152
\pinlabel $v_4$ at 167 201
\endlabellist

\includegraphics[width=0.46\textwidth]{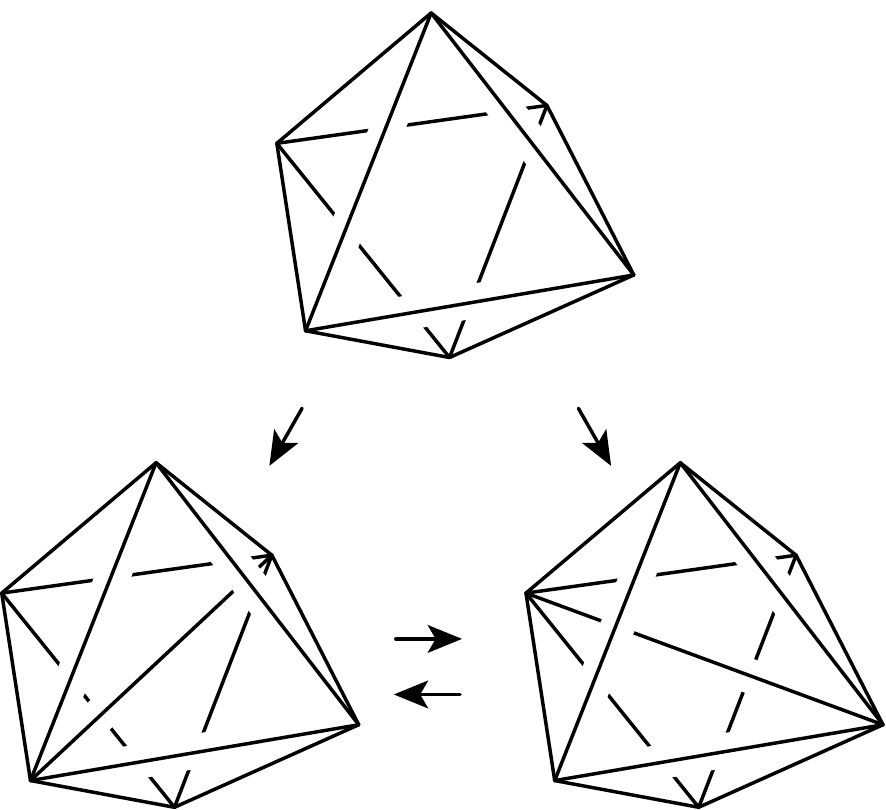}
\label{flip_2-2_internal}}

\caption{Almost--triangulations in cases (\ref{a-t_flip_2-2_external}) and (\ref{a-t_flip_2-2_internal}) of Lemma \ref{lem:almost-triangulations}, and the associated 2--2 flips. In all diagrams, vertices $v_1$ through $v_4$ are coplanar.}
\label{2-2_flips}
\end{figure}

\begin{defn}
Let $\mathcal{S}$ and $\mathcal{S}'$ be two polyhedral subdivisions of a point configuration $\bA$. Then \emph{$\mathcal{S}$  is a refinement of $\mathcal{S}'$} if for each $C\in \mathcal{S}$, there is a $C'\in\mathcal{S}'$ with $C\subset C'$.
\end{defn}

\begin{lemma}[Corollary 2.4.6 of \cite{DeLoera:book}]
Every almost--triangulation has exactly two proper refinements, which are both triangulations.
\end{lemma}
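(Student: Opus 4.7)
The plan is to exploit two facts: first, that every corank-one cell of $\mathcal{S}$ contains a common circuit $Z$, and second, that a circuit admits exactly two triangulations. Writing $Z = Z_+ \cup Z_-$ for the canonical sign partition arising from the unique (up to scale) affine dependence on $Z$, Radon's theorem yields that the only triangulations of $Z$ are
\[
T_+(Z) = \{Z \setminus \{z\} : z \in Z_+\} \quad\text{and}\quad T_-(Z) = \{Z \setminus \{z\} : z \in Z_-\},
\]
together with all of their faces.

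Given a non-simplex cell $C$ of $\mathcal{S}$, I would first show that $C = Z \cup F_C$ where $F_C$ is an affinely independent set disjoint from $Z$ and lying outside its affine span, so that combinatorially $C$ is the join of $Z$ with the simplex on $F_C$. This follows from corank one together with $Z \subseteq C$: a corank-zero simplex in any refinement of $C$ must omit exactly one point of $C$, and that point cannot lie in $F_C$, since otherwise the simplex would contain $Z$ as an affinely dependent subset. Consequently the two refinements of $C$ into simplices are precisely the joins of $F_C$ with $T_+(Z)$ and with $T_-(Z)$.

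Finally, I would argue that these sign choices are forced to agree across all non-simplex cells, so the global proper refinements of $\mathcal{S}$ are parametrised by a single sign. Given non-simplex cells $C_1, C_2$, one has $Z \subseteq C_1 \cap C_2$; since $\mathcal{S}$ is a polyhedral subdivision, $C_1 \cap C_2$ is a common face of both cells, and the induced triangulations on this face must agree from both sides, forcing the same choice of $T_+(Z)$ or $T_-(Z)$ globally. The main subtlety is this consistency step: one must verify that the entire circuit $Z$ sits inside the intersection face, rather than in a proper subface where the choice would be unconstrained. In the setting of Lemma \ref{lem:almost-triangulations} this is immediate from inspection of the three geometric cases: cases \ref{a-t_flip_2-3} and \ref{a-t_flip_2-2_external} involve only a single non-simplex 3--cell, and in case \ref{a-t_flip_2-2_internal} the two non-simplex cells share precisely the quadrilateral face carrying $Z$, as shown in Figure \ref{flip_2-2_internal}.
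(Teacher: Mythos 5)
The paper does not prove this lemma; it is cited directly from Corollary 2.4.6 of \cite{DeLoera:book}, so the relevant comparison is with the argument in that reference, which your outline reproduces in structure. The main steps are right: a circuit admits exactly two triangulations by Radon's theorem, a corank-one cell $C$ decomposes as the join of $Z$ with $C\setminus Z$, and the shared circuit forces a single global sign choice because $Z$ lies in the common face $C_1\cap C_2$ of any two corank-one cells. For the join decomposition, a cleaner argument than the one you give is direct: $C\setminus Z$ must be affinely independent and disjoint from $\mathrm{aff}(Z)$, since a failure of either would yield a second, independent affine relation on $C$, contradicting corank one.

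Two points need more care before the proof is complete. The lemma asserts exactly two \emph{proper refinements}, not merely that exactly two refinements happen to be triangulations; you also need that a corank-one configuration admits only three subdivisions in total (the trivial one and the two triangulations), so that every proper refinement of $\mathcal{S}$ already triangulates each corank-one cell. Your observation that every full-dimensional simplex in a refinement of $C$ has the form $C\setminus\{z\}$ with $z\in Z$ does not by itself rule out coarser non-trivial refinements. You should also check that either sign choice yields a valid global subdivision, i.e.\ that the refined cells glue to one another and to the untouched simplices of $\mathcal{S}$; this follows because a common face of a refined corank-one cell and a simplex of $\mathcal{S}$ is itself a simplex, so cannot contain $Z$, and the triangulation of the refined cell therefore restricts trivially to it. Finally, the closing retreat to case-by-case inspection of the three types in Lemma \ref{lem:almost-triangulations} is unnecessary and out of place in a proof of the general statement: once $Z\subseteq C_1\cap C_2$ is granted from the definition of an almost-triangulation, the consistency argument is general and does not depend on the particular geometric configurations arising in this paper.
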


\begin{defn}
Two triangulations of the same point configuration are \emph{connected by a flip supported on the almost--triangulation $\mathcal{S}$} if they are the only two triangulations refining $\mathcal{S}$.
\end{defn}

\begin{defn}
We call the flips associated to the three possible almost--triangulation types listed in Lemma \ref{lem:almost-triangulations} the \emph{2-3 flip}, \emph{external 2--2 flip}, and \emph{internal 2--2 flip} respectively. See Figures \ref{flip_2-3} and \ref{2-2_flips}.
\end{defn}

\begin{defn}
The \emph{flip graph} of the point configuration $\bA$ is the graph whose vertices are the triangulations of $\bA$ and whose edges are triangulations connected by flips.
\end{defn}

We use the following result, due to Gelfand, Kapranov and Zelevinsky, and given as Corollary 5.3.14 in \cite{DeLoera:book}.

\begin{thm}[Gelfand, Kapranov and Zelevinsky \cite{GKZ}]\label{GKZ}
Let $\bA$ be a point configuration. The subgraph of the flip graph induced by all regular triangulations of $\bA$ that use the same vertices is connected.
\end{thm}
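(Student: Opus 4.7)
The plan is to prove this by constructing the \emph{secondary polytope} $\Sigma(\bA)$ of the point configuration $\bA$ and identifying regular triangulations with its vertices. Specifically, to each triangulation $T$ of $\bA$ I associate the GKZ vector $\phi_T \in \R^{|\bA|}$ whose $j$-th coordinate is the sum of the Euclidean volumes of the simplices of $T$ incident to the $j$-th point $\bp_j$. The secondary polytope is then defined as the convex hull $\Sigma(\bA) := \mathrm{conv}\{\phi_T : T \text{ a triangulation of } \bA\}$. The goal is to show that the 1-skeleton of $\Sigma(\bA)$, restricted to faces parametrising triangulations with a fixed vertex set, gives exactly the subgraph of the flip graph in the statement; connectedness then follows from the fact that the 1-skeleton of any polytope (or face of it) is connected.

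First I would prove the identification of vertices: $\phi_T$ is a vertex of $\Sigma(\bA)$ if and only if $T$ is regular. The ``if'' direction uses the height function $\omega$ producing $T$ as a regular triangulation: the linear functional $\omega$ on $\R^{|\bA|}$ is minimised over all $\phi_{T'}$ precisely at $\phi_T$, which is a standard computation relating the lifted polytope in $\R^{n+1}$ to the volume of its lower envelope. The converse uses that every vertex of $\Sigma(\bA)$ is minimised by some linear functional, which plays the role of a height function and realises the corresponding triangulation as regular.

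Next I would identify the edges of $\Sigma(\bA)$ with flips. This is the main technical step. Given an edge of $\Sigma(\bA)$ joining $\phi_T$ and $\phi_{T'}$, there is a linear functional minimised along exactly this edge; the associated $\omega$ is a height function whose induced regular subdivision $\mathcal{S}(\bA, \omega)$ is refined by precisely $T$ and $T'$. By the local structure of polyhedral subdivisions, this common coarsening must be an almost-triangulation in the sense of Definition \ref{def:almost-triangulation}, so $T$ and $T'$ differ by a flip. Conversely, any flip supported on an almost-triangulation $\mathcal{S}$ yields two regular refinements whose GKZ vectors span an edge of $\Sigma(\bA)$, by perturbing the height function producing $\mathcal{S}$ slightly in two opposite directions.

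The main obstacle is handling the restriction to triangulations with a fixed vertex set, since the full secondary polytope allows triangulations that omit points in $\bA \setminus V$ or insert them as interior vertices. I would address this by showing that the triangulations using the same vertex set $V$ correspond to GKZ vectors lying on a common face $F_V$ of $\Sigma(\bA)$: this face is cut out by linear conditions forcing the coordinates at points not in $V$ to take their extremal values (zero, in the case of omitted points). Since any face of a polytope is itself a polytope, its 1-skeleton is connected, and the edge-flip correspondence restricted to $F_V$ shows that the induced subgraph of the flip graph is connected. This completes the proof.
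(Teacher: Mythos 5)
The paper does not give a proof here; it cites Corollary~5.3.14 of De Loera--Rambau--Santos, attributing the result to Gelfand--Kapranov--Zelevinsky. Your proposal is the standard secondary-polytope argument, and the identification of vertices of $\Sigma(\bA)$ with regular triangulations and of edges with flips is correct. (The converse you sketch --- that every flip between two regular triangulations yields an edge of $\Sigma(\bA)$ --- is false in general, since the supporting almost-triangulation need not itself be regular, but it is also not needed: connectedness only requires that edges of $\Sigma(\bA)$ are flips.)

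The step that fails is the reduction to a fixed vertex set $V$. The face $F_V$ you describe, cut out by minimising $\sum_{j \notin V} x_j$, consists of the GKZ vectors of regular triangulations using \emph{no} point outside $V$; but it also contains triangulations using a proper subset of $V$, since ``uses all of $V$'' is the open condition $x_j>0$ for $j\in V$ and does not determine a face. Connectedness of the 1-skeleton of $F_V$ therefore gives flip-connectivity among triangulations with vertex set \emph{contained in} $V$, not among those with vertex set exactly $V$. (Four collinear points already illustrate the problem: the secondary polytope is a quadrilateral, the unique triangulation using all four points is a single vertex, yet $F_V$ for the full point set is the whole quadrilateral.) The repair is to argue in the secondary fan rather than the secondary polytope: the set of height functions $\omega$ such that each lifted point $(\bp_j,\omega(\bp_j))$ with $j\in V$ is a vertex of the lower convex hull while each with $j\notin V$ lies strictly above it is a nonempty open convex cone, and a generic segment between height functions for $T$ and $T'$ stays in this cone and crosses walls of the secondary fan, giving a flip path through regular triangulations with vertex set exactly $V$. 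For the paper's application the subtlety is invisible, since all points lie on a sphere, every triangulation uses every vertex, and $F_V = \Sigma(\bA)$.
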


In our case, since the vertices lie on a sphere, all vertices are used in every triangulation, so this says that we can get from any regular triangulation  of the polyhedron to any other by performing flips.\\

\begin{rmk}\label{retriangulate strategy}
Our strategy for connecting two triangulations $\tri_1, \tri_2\in\calX_M^{\EP}$ is as follows. Both triangulations consist of regular triangulations of the polyhedra of the PPP--cellulation, together with layered triangulations in the polygonal pillows between them.
\begin{enumerate}
\item Use Theorem \ref{GKZ} on each polyhedron, to change the triangulation of each polyhedron in $\tri_1$ into the corresponding triangulation of  the polyhedron in $\tri_2$. This step may alter the triangulations of the polygonal pillows as well. \label{retriangulate step 1}
\item Change the triangulation in each polygonal pillow of the resulting triangulation so that it matches with the corresponding triangulation in $\tri_2$. This step does not alter the triangulations of the polyhedra. \label{retriangulate step 2}
\end{enumerate}
\end{rmk}

\subsection{Interpreting flip moves using 2--3 moves}

In order to carry out step (\ref{retriangulate step 1}) of our plan in Remark \ref{retriangulate strategy}, we will interpret the flip moves in terms of the 2--3 and 3--2 moves allowed in Theorem \ref{thm.00}. 
\begin{enumerate}
\item First, consider a 2--3 flip in one polyhedron. This is simply a 2--3 move. Since the triangulations on either side of the move are in $\calX_M^{\EP}$, they are both 1--efficient by Remark \ref{Xbar^EP 1-efficient}. Therefore the triangulations have the same index by Theorem \ref{thm.00}.

\item Second, consider an external 2--2 flip. See Figure \ref{2-2move_external}. The base of the pyramid is on a face of the polyhedron that is glued to a polygonal pillow. The two triangles on the base of the pyramid are glued to either a single tetrahedron in the pillow, or two tetrahedra in either the polygonal pillow or the polyhedron on the other side of the polygonal pillow.
\begin{itemize}
\item If the two triangles are glued to a single tetrahedron then we are in the situation shown in the top diagram of Figure \ref{2-2move_external}, and we can perform a 3--2 move, which performs the flip to the polyhedron, and removes the single tetrahedron from the layered triangulation of the polygonal pillow. 
\item Otherwise, we perform a 2--3 move, which performs the flip to the polyhedron, and adds a flat tetrahedron to the layered triangulation of the polygonal pillow. (Note that we could use only this move, even in the previous case; the difference between the two options is a 0--2 move.)
\end{itemize}
Once again, the triangulations on either side of the move are in $\calX_M^{\EP}$, so they are both 1--efficient and the triangulations have the same index.

\item Lastly, consider an internal 2--2 flip. See Figure \ref{2-2move_internal}. We can perform a 2--3 move followed by a 3--2 move, which together perform the flip. Since the four vertices in the circuit are coplanar, at the intermediate step we introduce a flat tetrahedron. The intermediate triangulation is not in $\calX_M^{\EP}$, since it includes a flat tetrahedron in a polyhedron, and so the polyhedron does not have a regular triangulation. However, the intermediate triangulation still has a natural semi--angle structure in the obvious way, and so it is 1--efficient, and again the three triangulations involved all have the same index.
\end{enumerate}

\begin{figure}[htb]
\centering
\subfloat[Two possible ways to perform an external 2--2 move, depending on whether or not there is a suitable flat tetrahedron in the polygonal pillow that the base of the pyramid is glued to.]{
\labellist
\pinlabel 3--2 at 114 167
\pinlabel 2--3 at 114 50
\small
\pinlabel $v_1$ at -7 174
\pinlabel $v_2$ at 1 116
\pinlabel $v_3$ at 112 132
\pinlabel $v_4$ at 88 181

\pinlabel $v_1$ at -7 57
\pinlabel $v_2$ at 1 -1
\pinlabel $v_3$ at 112 15
\pinlabel $v_4$ at 88 64
\endlabellist
\includegraphics[width=0.46\textwidth]{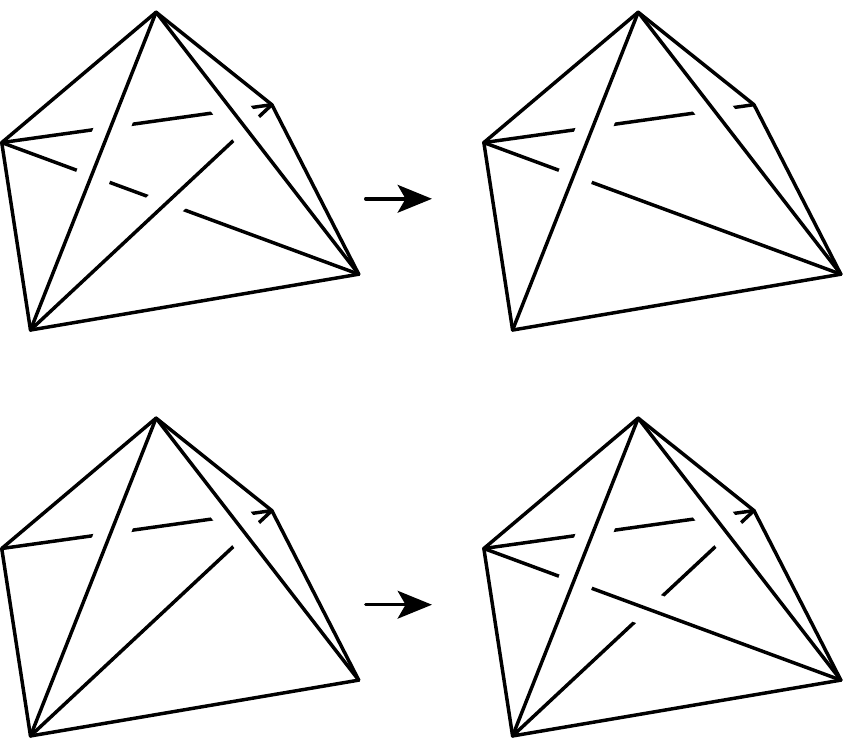}
\label{2-2move_external}}
\quad
\subfloat[An internal 2--2 flip, obtained by performing a 2--3 move followed by a 3--2 move. At the intermediate step we get a flat tetrahedron with vertices $v_1$ through $v_4$.]{
\labellist
\pinlabel 2--2 at 124.5 170
\pinlabel 2--3 at 60 92
\pinlabel 3--2 at 184 92
\small
\pinlabel $v_1$ at -7 176
\pinlabel $v_2$ at 1 118
\pinlabel $v_3$ at 112 134
\pinlabel $v_4$ at 88 183

\endlabellist
\includegraphics[width=0.46\textwidth]{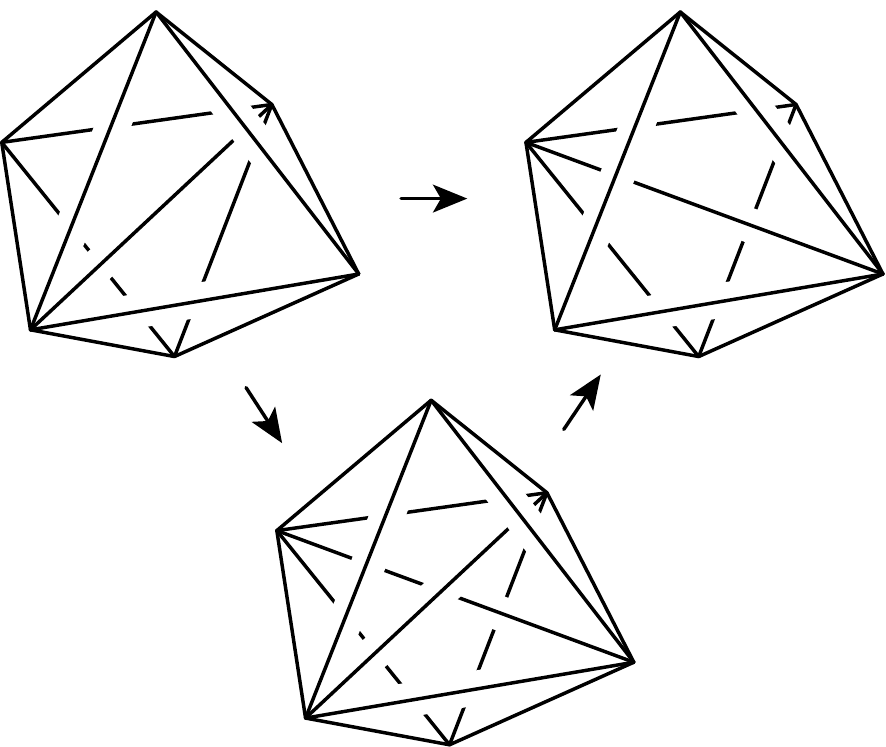}
\label{2-2move_internal}}
\caption{The possible 2--2 moves on a triangulation of a point configuration, realised using 2--3 and 3--2 moves. Again in each case, vertices $v_1$ through $v_4$ are coplanar.}
\label{2-2 moves}
\end{figure}

The arguments so far show that we can achieve step (\ref{retriangulate step 1}) of Remark \ref{retriangulate strategy}. Now we need to deal with step (\ref{retriangulate step 2}).

\subsection{Moving paths in the 1--skeleton of the associahedron using 2--3 and 0--2 moves}
\label{sub.moving.paths}

In order to carry out step (\ref{retriangulate step 2}) of our plan in Remark \ref{retriangulate strategy}, we need to modify the layered triangulations in the polygonal pillows. We will do this using the 2--3, 3--2, 0--2 and 2--0 moves allowed in Theorems \ref{thm.00} and \ref{thm.20move.index}.\\

Having completed step (\ref{retriangulate step 1}), we have two triangulations, $\tri_1$ and $\tri_2$, which agree on the polyhedra but may differ in the polygonal pillows. Let $Q$ be a polygonal pillow. Since $\tri_1$ and $\tri_2$ agree on the polyhedra glued to either side of $Q$, they agree on the triangulations of the polygonal faces $Q_+$ and $Q_-$. Call these triangulations $t_+$ and $t_-$.\\

The next well-known theorem concerns the identification of the
set of triangulations of an $n$-gon with the vertices of the 
{\em associahedron} $K_{n-2}$, and the set of geometric bistellar flips
with the edges of $K_{n-2}$. The associahedron $K_{n-2}$ was introduced 
by Stasheff \cite{Stasheff:1963}. An identification of $K_{n-2}$ with a convex
polytope in Euclidean space was given in the appendix to \cite{Stasheff:Luminy}.
The cellular decomposition of the polytope $K_{n-2}$ (and in particular,
its 2-skeleton) is discussed at length in the above references and also in 
\cite{Loday}. The fact that the 2-dimensional faces of the associahedron 
are squares and pentagons also follows from MacLane's coherence theorem
\cite{MacLane}. A vast generalization of regular triangulations of point
configurations was studied by Gelfand-Kapranov-Zelevinsky, and in 
\cite[Sec.7.3]{GKZ} it is explained how to identify the secondary polytope
of 2-dimensional configurations with the associahedron.

\begin{thm}\label{associahedron properties}
The set of triangulations of an $n$--gon and the set of diagonal flips connecting them correspond to the vertices and edges of a convex polytope called the \emph{associahedron}. The set of 2--cells of the associahedron consists of squares and pentagons. Each square corresponds to two commuting diagonal flips (as in Definition \ref{diagonal flip}) on two 4-gons whose interiors are disjoint. Each pentagon corresponds to a \emph{pentagon relation} between the five triangulations of a pentagon. See Figure \ref{associahedron 2-cells}.
\end{thm}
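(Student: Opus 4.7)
The plan is to realise the associahedron as the secondary polytope of the vertex set of a convex planar $n$-gon, following Gelfand--Kapranov--Zelevinsky \cite{GKZ}. Let $\bA$ be the configuration of the $n$ vertices of a convex $n$-gon in $\BR^2$; all points of $\bA$ are in convex position and no three are collinear, so every triangulation of $\bA$ in the sense of Definition \ref{point config triangulation} uses every vertex. Moreover every such triangulation is regular: given a triangulation $T$, one constructs a strictly convex piecewise-linear height function $\omega\co\bA\to\R$ whose domains of linearity on $\mathrm{conv}(\bA)$ are exactly the triangles of $T$, so that $T=\mathcal{S}(\bA,\omega)$.

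For the vertex--triangulation correspondence, I would quote the general fact that the vertices of the secondary polytope of $\bA$ are in bijection with its regular triangulations (see \cite{DeLoera:book}), which by the previous paragraph are all the triangulations of the $n$-gon. For the edge--flip correspondence, the edges of the secondary polytope correspond to regular almost-triangulations in the sense of Definition \ref{def:almost-triangulation}. In dimension two with points in convex position, the only circuits available in $\bA$ are those of Remark \ref{our corank one configs}, namely four points in convex position forming a quadrilateral; the two triangulations refining such an almost-triangulation are precisely the two ways of adding a diagonal to that quadrilateral. Hence edges correspond to diagonal flips in the sense of Definition \ref{diagonal flip}.

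For the 2-cells I would analyse polyhedral subdivisions of $\bA$ that are one refinement step coarser than almost-triangulations: these parametrise the 2-faces of the secondary polytope, and their triangulation-refinements give the vertices of the 2-face. Two circuits in $\bA$ either have disjoint vertex sets or share at least one vertex. In the disjoint case, the two quadrilaterals supporting the circuits have disjoint interiors, so the two associated diagonal flips commute; the four combinations of diagonal choices give a 2-face with four vertices, i.e.\ a square. In the overlapping case, convex position together with planarity force the union of the two circuits to consist of five points in convex position (otherwise one would get three collinear points or a non-convex configuration, contradicting general position on a convex curve); the five triangulations of a convex pentagon form a $5$-cycle under diagonal flips, producing a pentagonal 2-face that realises the classical pentagon relation.

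The main obstacle is the 2-cell classification, in particular the overlap analysis ruling out any further possibilities when two circuits share vertices in the plane. This step is elementary once all vertices lie on a convex curve, and it is in any case already handled in the literature: the face structure of the associahedron appears in Stasheff's original construction of $K_{n-2}$ \cite{Stasheff:1963, Stasheff:Luminy} and is made fully explicit in Loday's polytopal realisation \cite{Loday}. Thus the verification reduces to matching the combinatorics of secondary polytope 2-faces to this known face lattice, which concludes the proof.
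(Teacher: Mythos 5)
Your secondary--polytope approach is one of the routes the paper itself points to (GKZ \cite{GKZ}); the paper gives no proof, only a list of references, so you are filling in details rather than diverging. The vertex and edge parts of your sketch are sound in outline: triangulations of a convex planar $n$-gon are all regular, and a corank-one cell in this setting is a convex quadrilateral, so edges of the secondary polytope are diagonal flips.

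The gap is in your 2-cell classification. You split pairs of circuits according to whether their \emph{vertex sets} are disjoint or share a vertex, and you assert that sharing forces the union to be five points in convex position (claiming anything else would produce collinear points or a non-convex configuration). That claim is false, and the proposed justification is a non sequitur: any subset of the vertices of a convex $n$-gon is automatically in convex position, whatever its size. A concrete counterexample to your dichotomy: in the hexagon $123456$, cut along the chord $14$ to get the two quadrilateral cells $\{1,2,3,4\}$ and $\{1,4,5,6\}$. This is a regular coarsening with two corank-one cells, hence a 2-face of the secondary polytope, and its four triangulation refinements form a \emph{square}. Yet the two circuits share the vertices $1,4$ and their union has six points, not five. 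So ``overlapping vertex sets'' does not imply a pentagonal 2-face.

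The correct dichotomy is the one already built into the theorem statement: it is the \emph{interiors} of the two quadrilaterals that must be disjoint for a square, not their vertex sets. Equivalently, in the subdivision picture a 2-face corresponds to a regular coarsening whose non-triangular maximal cells are either two quadrilaterals with disjoint interiors (giving a square, with the quads possibly sharing up to two vertices, as in the example above) or a single pentagon (giving a pentagonal 2-face); in the latter case the five circuits arising from its five edges pairwise share three vertices and each union is the same five points. Falling back on Stasheff and Loday for the face lattice is fine as a citation, but the combinatorial argument you attach to it needs this correction to go through.
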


\begin{figure}[htb]
\centering
\subfloat[A square 2--cell of the associahedron.]{
\includegraphics[width=0.43\textwidth]{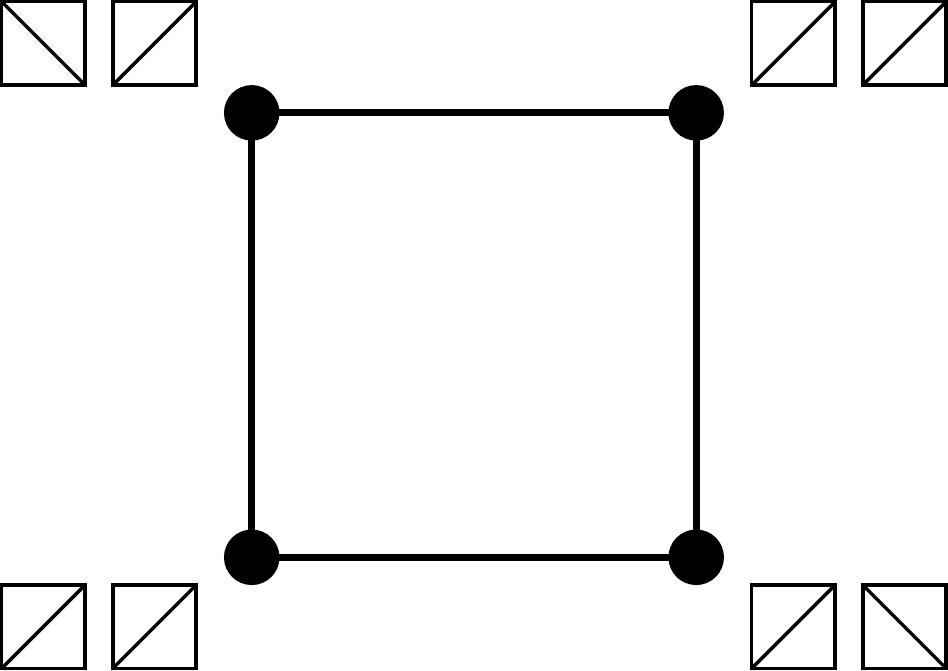}
\label{associahedron_square}}
\quad
\subfloat[A pentagon 2--cell of the associahedron.]{
\includegraphics[width=0.49\textwidth]{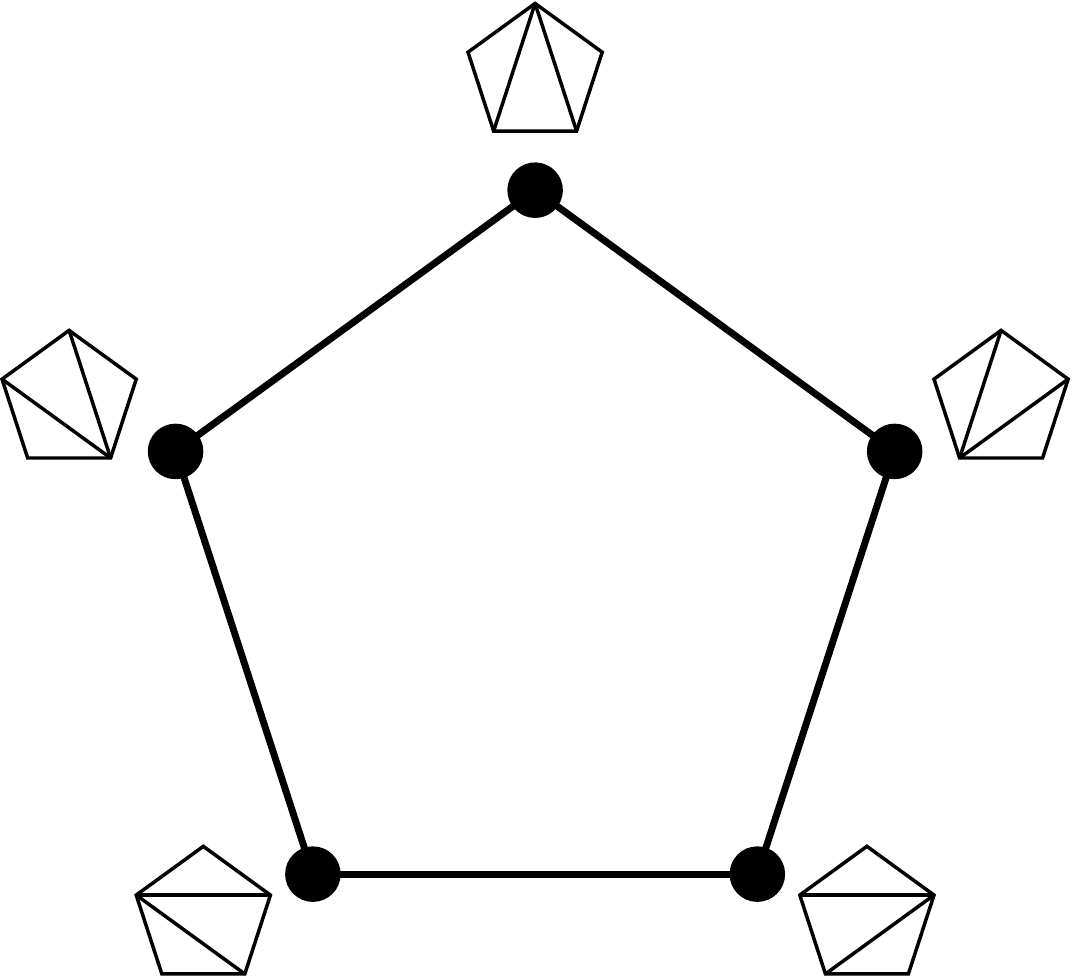}
\label{associahedron_pentagon}}
\caption{The 2--cells of the associahedron.}
\label{associahedron 2-cells}
\end{figure}

Our two layered triangulations of the polygonal pillow $Q$ can be represented as two paths $p_1$ and $p_2$ in the 1--skeleton of the associahedron that both start at the vertex corresponding to $t_-$ and end at the vertex corresponding to $t_+$. Since the fundamental group of the 2--skeleton $X^{(2)}$ of a CW complex
$X$ satisfies $\pi_1(X^{(2)})=\pi_1(X)$, it follows that the 2--skeleton
of a convex polytope is simply connected, and every loop along the 1-skeleton
can be trivialized by moving pieces of it across the 2-cells.
It follows that the 
2--skeleton of the  associahedron is simply connected, and so we can homotope 
$p_1$ to $p_2$, fixing endpoints,  by moving the path across some finite sequence of the 2--cells. To be precise, we can homotope $p_1$ to $p_2$ with a combination of the following moves and their inverses.

\begin{enumerate}
\item Remove a \emph{backtracking}, that is, replace a segment of a path of the form $\ldots,t_1, t_2, t_1,\ldots$ with $\ldots, t_1, \ldots$, where $t_1$ and $t_2$ are related by a diagonal flip. \label{associahedron isotope 2-0}
\item Move a path that follows two consecutive sides of a square 2--cell to follow the other two sides. \label{associahedron isotope 2-2}
\item Move a path that follows three consecutive sides of a square 2--cell to follow the other side. \label{associahedron isotope 3-1}
\item Delete a part of a path that goes around all four sides of a square 2--cell. \label{associahedron isotope 4-0}
\item Move a path that follows three consecutive sides of a pentagon 2--cell to follow the other two sides. \label{associahedron isotope 3-2}
\item Move a path that follows four consecutive sides of a pentagon 2--cell to follow the other side. \label{associahedron isotope 4-1}
\item Delete a part of a path that goes around all five sides of a pentagonal 2--cell. \label{associahedron isotope 5-0}
\end{enumerate}

These moves can be achieved as follows.
\begin{itemize}
\item Move (\ref{associahedron isotope 2-0}) corresponds to performing a 2--0 move on the triangulation. Since the two triangulations are both in $\calX_M^{\EP}$, they are both 1--efficient by Remark \ref{Xbar^EP 1-efficient}. Therefore the two triangulations have the same index by Theorem \ref{thm.20move.index}.
\item Move (\ref{associahedron isotope 2-2}) does not change the layered triangulation at all, it only swaps the order in which we add two non-overlapping flat tetrahedra to the layering.
\item Move (\ref{associahedron isotope 3-1}) can be made by applying move (\ref{associahedron isotope 2-2}) followed by move (\ref{associahedron isotope 2-0}).
\item Move (\ref{associahedron isotope 4-0}) can be made by applying move (\ref{associahedron isotope 2-2}) followed by move (\ref{associahedron isotope 2-0}) twice.
\item Move (\ref{associahedron isotope 3-2}) corresponds to performing a 3--2 move on the triangulation. As before, since the two triangulations are both in $\calX_M^{\EP}$, they are both 1--efficient by Remark \ref{Xbar^EP 1-efficient}. Therefore the two triangulations have the same index by Theorem \ref{thm.00}(b).
\item Move (\ref{associahedron isotope 4-1}) can be made by applying move (\ref{associahedron isotope 3-2}) followed by move (\ref{associahedron isotope 2-0}).
\item Move (\ref{associahedron isotope 5-0}) can be made by applying move (\ref{associahedron isotope 3-2}) followed by move (\ref{associahedron isotope 2-0}) twice.
\end{itemize}

\begin{proof}[Proof of Theorem \ref{thm.2}]
We have shown in this section that given any two triangulations in $\calX_M^{\EP}$, the steps in Remark \ref{retriangulate strategy} can be made using 2--3, 3--2, 0--2 and 2--0 moves which preserve the index. Thus the entire class of triangulations has the same index. Since $\calX_M^{\EP}$ depends only on the topology of $M$, we can take the index of any of these triangulations for the value of the index for the manifold, and this depends only on the topology of $M$.
\end{proof}

\begin{proof}[Proof of Theorem \ref{thm.3}]
Fix a cusped hyperbolic manifold $M$ and $\CT \in \calX_M^{\EP}$. $\CT$
consists of two types of tetrahedra: the ones that subdivide the ideal
hyperbolic cells of the Epstein-Penner cell decomposition of $M$, and the
ones that are parts of the pillows. The former have geometric shapes (i.e.,
shapes that are in the upper half plane), and the latter have real 
non-degenerate shapes. By construction, the shapes $Z_{\CT}$ 
satisfy the gluing equations of $\CT$, proving part (a). Their arguments
also satisfy the gluing equations, proving part (c).

Also by construction, the shapes $Z_{\CT}$ and $Z_{\CT'}$ are related by
2--3, 3--2, 0--2 and 2--0 moves if the corresponding triangulations 
$\CT$ and $\CT'$ are related by the same moves. 
\end{proof}


\section{Computations}
\lbl{sec.computation}

\subsection{How to compute the coefficients of a $q$-series}
\lbl{sub.overview.compute}

In this section we will explain a general method to compute the
coefficients of a $q$-series which is given by a multi-dimensional sum
of some basic $q$-series. The idea is simple, but the effective aspects
of it are tricky and were explained to the first author by D. Zagier. 
The method is applied in forthcoming work \cite{GV} which computes the 
coefficients of the stabilization of the coloured Jones polynomial of 
an alternating knot \cite{GL2}.

Fix a 1-efficient ideal triangulation $\CT$ with $n$ tetrahedra of a 1-cusped 
hyperbolic manifold $M$ and an oriented  multi-curve $\varpi$ on $\bd M$.
The index 
$I_{\CT}(\varpi)(q) \in \BZ((q^{1/2}))$ is given by a convergent 
$(n-1)$-dimensional sum over the integers of a summand that
depends on the angle structure equation matrices of $\CT$; see Equation 
\eqref{eq.indT}. The summand of $I_{\CT}(\varpi)$ is a product of tetrahedron 
indices (one per tetrahedron of $\CT$) of linear forms in 
$\kk =(k_1, \ldots, k_{n-1}) \in \BZ^{n-1}$ and the turning number vectors $(a_{\varpi}|b_{\varpi})$ of $\varpi$.

The building block of the summand is the tetrahedron
index $I_\Delta$, whose degree (i.e., minimum degree with respect to $q$)
is a piecewise quadratic function on $\R^2$; see Section \ref{sub.degI}.
By a {\em piecewise quadratic function} on $\BR^m$, we mean that there exists a partition $\calF$
of $\BR^{m}$ into a finite number of chambers whose boundaries are 
rational polyhedral cones such that the restriction of the function to 
each chamber is given by a quadratic polynomial. 
It follows that for fixed $\varpi$, the degree $\d(\varpi,\kk)$ of the summand 
in \eqref{eq.indT} is a piecewise quadratic function defined on a partition 
$\calF_{\CT}$ of $\BR^{n-1}$. In a later publication, we will explain how 
to compute $\calF_{\CT}$ directly from $\CT$. 
A priori, $\calF_{\CT}$ need not be a fan in $\BR^{n-1}$ as the cones in 
the partition may not be convex.

Since $\CT$ is 1-efficient, it follows that its index $I_{\CT}(\varpi)(q)$
is a convergent series. In other words, the degree of the summand is
a proper function on $\BZ^{n-1}$. Thus, for fixed $\varpi$ and 
every half-integer $N$, the set $\{\kk \in \BZ^{n-1} \,: \d(\varpi,\kk) \leq N \}$
is finite. To compute the index of $\CT$, we need 
to compute bounds on this finite set. Since $\d$ is convex and 
piecewise quadratic, it follows that to bound $\d(\varpi,\kk)$, it suffices
to bound the restriction of $\d(\varpi,\kk)$ to an arbitrary ray $\rho = \{ k' \rho_0: k' \in \BN\}$ of $\calF_T$.
Now $\d(\varpi,k' \rho_0)$ is a quadratic function of $k' \in \BN$, and 
we obtain sharp bounds for 
$k_i$ typically of the form $k_i=O(\sqrt{N})$
and {\em exceptionally} of the form $k_i=O(N)$. The latter happens when
$\d$ has linear growth on some ray of $\calF_T$. These directions of 
linear growth (also observed in \cite{GL2} in the context of stabilization
of the coloured Jones function) are computationally costly. For an example,
see the case of the knot $6_1$ discussed below. 

The bounds for $k_i$ discussed above are rigorous and sharp, and 
work well for $n=2$ and $n=3$ tetrahedra. However, they
quickly become inefficient when $n$ increases (e.g. $n=9$).
The better way to proceed for larger $n$, as was explained to us 
by D. Zagier and is applied successfully in \cite{GV}, is to use 
{\em iterated summation}. In the examples shown below for $n=2$ and $n=3$
iterated summation is not needed. For simplicity, we focus on 1-cusped
manifolds and their index for $[\varpi]=0 \in H_1(\bd M;\Z)$.
The data presented below are available from \cite{Ga:data}.

\begin{rmk}
\lbl{rem.same.index}
If two 1-cusped hyperbolic 3-manifolds $M$ and $M'$ have 1-efficient 
ideal triangulations $\CT$ and $\CT'$ with equal angle structure matrices, 
then $I_{\CT}(0)(q)=I_{\CT'}(0)(q)$.
For example, $M$ and $-M$ have
such triangulations, where $-M$ denotes the orientation reversed cusped
hyperbolic manifold. Also, $M_{\phi}$ and $M_{-\phi}$ have such
triangulations for a pseudo-Anosov
homeomorphism $\phi$ of a once punctured torus, where 
$M_{\phi}$ denotes the mapping torus of $\phi$.
\end{rmk}

\subsection{The index of the $4_1$ knot complement}
\lbl{sub.index41}

The default {\tt SnapPy} triangulation $\CT$ of the $4_1$ knot complement uses 2 regular
ideal tetrahedra and coincides with the Epstein-Penner decomposition,
thus $\calX_{4_1}^{\EP}=\{\CT\}$. {\tt SnapPy} gives the angle structure 
matrices of $\CT$:
$$
\overline{\mb A} =
\begin{pmatrix}
2 & 2 \\
0 & 0 
\end{pmatrix} \qquad
\overline{\mb B} =
\begin{pmatrix}
1 & 1 \\ 
1 & 1 
\end{pmatrix} \qquad
\overline{\mb C} =
\begin{pmatrix}
 0 & 0 \\
 2 & 2 
\end{pmatrix}.
$$
{\tt SnapPy} also gives an additional two rows which correspond to the meridian and longitude equations; we will
not use these in our examples so we omit them here. We eliminate $\overline{\mb B}$ to obtain $\rm A$ and $\rm B$. We choose our basic edge set $\mathcal B = \{e_1\}$ and excluded edge set $\mathcal X = \{e_2\}$. We can therefore ignore the rows of our matrices corresponding to $e_2$, and write 

$$
\mb A'= 
\left(
\begin{array}{cc}
 1 & 1 \\
\end{array}
\right)
\qquad 
\mb B'= 
\left(
\begin{array}{cc}
 -1 & -1 \\
\end{array}
\right)
\qquad 
\nu'=
\left(
\begin{array}{c}
 0 \\
\end{array}
\right).
$$

We obtain the angle structure equations  $$
\mb A' \a + \mb B' \gamma = \pi \nu'. 
$$
Equation \eqref{eq.indT} gives that
$$
I_{4_1}(0)(q)=\sum_{k \in \BZ} \ID(k,k)(q)^2 \in \BZ[[q]] \, .
$$
Equation \eqref{eq.degID} implies that
the degree $\d(k)$ of the summand is given by the piecewise quadratic polynomial
$$
\d(k)=2 \d(k,k)= 2 k^2 + |k| \, .
$$
Its corresponding fan
$\calF_{\CT}$ in $\BR$ consists of two rays, corresponding to the columns of the matrix
$$
\left(
\begin{array}{cc}
 1 & -1
\end{array}
\right)\, .
$$
Here, a column vector $v$ of a matrix, spans the ray $\BR_+ v$.
It follows that for every natural number $N$ we have
$$
I_{4_1}(0)(q) + O(q)^{N+1}=
\sum_{k=\lfloor 1/4 (-1 - \sqrt{1 + 8 N}) \rfloor}^{\lceil 
1/4 (-1 + \sqrt{1 + 8 N}) \rceil }
\ID(k,k)(q)^2 + O(q)^{N+1} \, .
$$
To compute $\ID(k,k)(q) +  O(q)^{N+1}$, we use its definition \eqref{eq.ID}
and truncate the $n$-summation as follows
\be
\lbl{eq.IDtruncated}
\ID(k,k)(q) +  O(q)^{N+1}=
\sum_{n=\lfloor 1/2 (-1 + 2 k - \sqrt{1 - 4 k + 8 k^2 + 8 N}) \rfloor }^{
\lceil 1/4 (-1 + \sqrt{1 + 8 N}) \rceil } 
(-1)^n \frac{q^{\frac{1}{2}n(n+1)
-\left(n+\frac{1}{2}k\right)k}}{(q)_n(q)_{n+k}} + O(q)^{N+1} \, .
\ee
Putting everything together, the first 100 coefficients of $I_{4_1}(0)(q)$
are given by

\vspace{0.3cm}
{\tiny
\begin{math}
1-2 q-3 q^2+2 q^3+8 q^4+18 q^5+18 q^6+14 q^7-12 q^8-52 q^9-106 q^{10}-164 q^{11}-209 q^{12}-212 q^{13}-141 q^{14}+14 q^{15}+309 q^{16}+714 q^{17}+1249 q^{18}+1824 q^{19}+2401 q^{20}+2794 q^{21}+2898 q^{22}+2434 q^{23}+1256 q^{24}-918 q^{25}-4186 q^{26}-8712 q^{27}-14394 q^{28}-21046 q^{29}-28184 q^{30}-35094 q^{31}-40740 q^{32}-43732 q^{33}-42508 q^{34}-35068 q^{35}-19524 q^{36}+6288 q^{37}+43942 q^{38}+95026 q^{39}+159698 q^{40}+237774 q^{41}+326680 q^{42}+422880 q^{43}+519595 q^{44}+608636 q^{45}+677761 q^{46}+713352 q^{47}+697625 q^{48}+611956 q^{49}+434572 q^{50}+144616 q^{51}-279773 q^{52}-856288 q^{53}-1599627 q^{54}-2515906 q^{55}-3602521 q^{56}-4842516 q^{57}-6203552 q^{58}-7632646 q^{59}-9054429 q^{60}-10367858 q^{61}-11443874 q^{62}-12125534 q^{63}-12226286 q^{64}-11535062 q^{65}-9815935 q^{66}-6820480 q^{67}-2289703 q^{68}+4024698 q^{69}+12355340 q^{70}+22887604 q^{71}+35751602 q^{72}+50979996 q^{73}+68497913 q^{74}+88071340 q^{75}+109297633 q^{76}+131547294 q^{77}+153959928 q^{78}+175385202 q^{79}+194390216 q^{80}+209208210 q^{81}+217767013 q^{82}+217655122 q^{83}+206182023 q^{84}+180375446 q^{85}+137083864 q^{86}+73018494 q^{87}-15089960 q^{88}-130393760 q^{89}-275708923 q^{90}-453351590 q^{91}-664856517 q^{92}-910744842 q^{93}-1190185170 q^{94}-1500703210 q^{95}-1837805659 q^{96}-2194650672 q^{97}-2561673782 q^{98}-2926258326 q^{99}-3272416148 q^{100}
\end{math}
}

The first 1000 coefficients are available from \cite{Ga:data}.

\subsection{The index of the sister of the $4_1$ knot complement}
\lbl{sub.index41s}

The $4_1$ knot complement and its sister are the census manifolds {\tt m004} and 
{\tt m003} respectively, and are punctured torus bundles over a circle 
with monodromy $+RL$ and $-RL$ respectively \cite{SnapPy}. The Epstein-Penner
decompositions for {\tt m004} and {\tt m003} consist of two regular ideal
tetrahedra. The edge gluing equations of $+RL$ and $-RL$ coincide. 
Thus, $I_{\text{{\tt m003}}}(0)=I_{\text{{\tt m004}}}(0)$.

\subsection{The index of the $5_2$ knot complement}
\lbl{sub.index52}

The default {\tt SnapPy} triangulation $\CT$ of the $5_2$ knot  complement uses 3 ideal 
tetrahedra. The Epstein-Penner decomposition $\CT^{\EP}$ uses 
4 ideal tetrahedra. Both triangulations carry geometric shape
structures and thus canonical strict angle structures. One can show that
$\CT$ and $\CT^{\EP}$ are related by geometric 2--3 moves hence they have
equal indices. For reasons of efficiency, we will work with the triangulation
$\CT$. Its angle structure matrices are given by:

$$
\overline{\mb A} =
\left(
\begin{array}{ccc}
 1 & 1 & 1 \\
 0 & 0 & 0 \\
 1 & 1 & 1 
\end{array}
\right)
\qquad
\overline{\mb B} =
\left(
\begin{array}{ccc}
 0 & 2 & 0 \\
 1 & 0 & 1 \\
 1 & 0 & 1 
\end{array}
\right)
\qquad
\overline{\mb C} =
\left(
\begin{array}{ccc}
 1 & 0 & 1 \\
 1 & 2 & 1 \\
 0 & 0 & 0 
\end{array}
\right) \, .
$$

Eliminating $\overline{\mb B}$, and removing the third row (which corresponds 
to the third edge equation), we obtain the angle
structure equations
$$
\mb A' \a + \mb B' \gamma = \pi \nu' 
$$
where
$$
\mb A'=
\left(
\begin{array}{ccc}
 1 & -1 & 1 \\
 -1 & 0 & -1 \\
\end{array}
\right)
\qquad 
\mb B'=
\left(
\begin{array}{ccc}
 1 & -2 & 1 \\
 0 & 2 & 0 \\
\end{array}
\right)
\qquad 
\nu'=
\left(
\begin{array}{c}
 0 \\
 0 \\
\end{array}
\right) \, .
$$
The index of $\CT$ is given by
\begin{equation}
\lbl{eq.ind.52}
I_{5_2}(0)(q)=\sum_{(k_1,k_2) \in \BZ^2} \ID(-k_1,k_1-k_2)^2 \ID(2 k_1-2 k_2,-k_1)
\in \BZ[[q]] \, .
\end{equation}
Equation \eqref{eq.degID} implies that the degree $\d(k_1,k_2)$ of the 
summand is a piecewise quadratic polynomial with fan $\calF_{\CT}$
given by six rays, $\rho_1, \ldots, \rho_6$, corresponding to the columns of the matrix
$$
\left(
\begin{array}{cccccc}
 2 & 1 & 0 & -1 & -1 & 0 \\
 1 & 1 & 1 & 0 & -1 & -1 \\
\end{array}
\right) \, .
$$

\begin{figure}[htpb]
\centering
\labellist
\pinlabel $\rho_5$ at -3 3
\pinlabel $\rho_4$ at -3 18
\pinlabel $\rho_3$ at 13 33
\pinlabel $\rho_2$ at 38 33
\pinlabel $\rho_1$ at 38 25
\pinlabel $\rho_6$ at 22 3

\endlabellist
\includegraphics[width=0.13\textwidth]{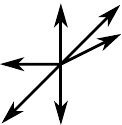}
\caption{The fan of the summand of $I_{5_2}(0)$.}
\label{6ays}
\end{figure}
Let $C_{ij}$ denote the two-dimensional cone with rays $\rho_i$ and $\rho_j$.
Then we have:
$$
\d(k_1,k_2)=
\begin{cases}
\frac{k_1}{2}+\frac{k_1^2}{2}
& \text{if} \,\, (k_1,k_2) \in C_{12} \\
-\frac{k_1}{2}-\frac{k_1^2}{2}+k_2+k_2^2
& \text{if} \,\, (k_1,k_2) \in C_{23} \\
-k_1+k_1^2+k_2-2 k_1 k_2+k_2^2
& \text{if} \,\, (k_1,k_2) \in C_{34} \\
-k_1+k_1^2
& \text{if} \,\, (k_1,k_2) \in C_{45} \\
k_1^2-k_2-2 k_1 k_2+2 k_2^2
& \text{if} \,\, (k_1,k_2) \in C_{56} \\
k_1+2 k_1^2-k_2-4 k_1 k_2+2 k_2^2
& \text{if} \,\, (k_1,k_2) \in C_{61}  \, .
\end{cases}
$$
A plot of $\d(k_1,k_2)$ for $k_1, k_2 \in [-1,1]$ is given in Figure \ref{plot.summand.52}.

\begin{figure}[htpb]
\centering
\includegraphics[width=0.5\textwidth]{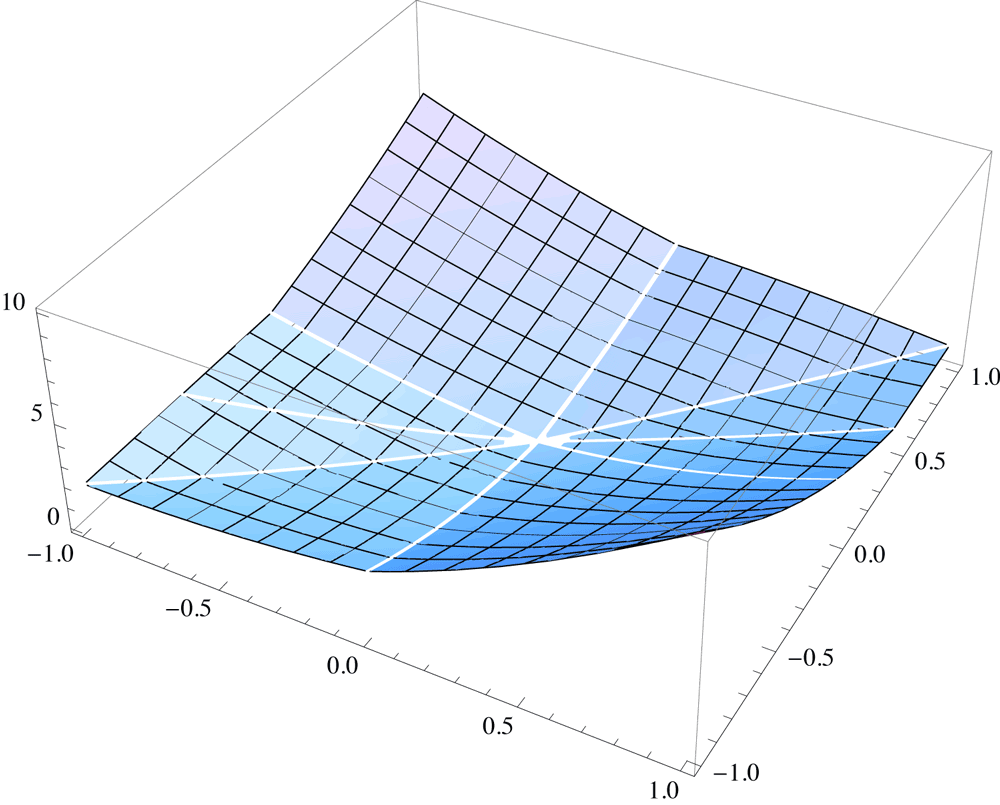}
\caption{The plot of the degree of the summand of $5_2$.}
\label{plot.summand.52}
\end{figure}

If $\d(k_1,k_2) \leq N$, then to bound $k_1$ 
from above and below we use the restriction to the rays $\rho_1$ and 
$\rho_5$ respectively. Likewise, to bound $k_2$ 
from above and below we use the restriction to the rays $\rho_2$ and 
$\rho_5$ respectively. The equations
$$
\d(k_1,k_1)=
\begin{cases} 
(-1 + k_1) k_1 & \text{if} \,\, k_1 \leq 0 \\
\frac{1}{2} k_1 (1 + k_1) & \text{if} \,\, k_1 \geq 0
\end{cases}
\qquad
\d(2 k_2,k_2)=
\begin{cases} 
2 k_2 (-1 + 2 k_2) k_2 & \text{if} \,\, k_2 \leq 0 \\
k_2 (1 + 2 k_2) & \text{if} \,\, k_2 \geq 0
\end{cases}
$$
and the inequality $\d(k_1,k_2) \leq N$ give the bounds
\begin{subequations}
\be
\lbl{eq.52boundsa}
\frac{1}{2} (1 - \sqrt{1 + 4 N}) \leq k_1  \leq 
\frac{1}{2} (1 + \sqrt{1 + 8 N}) 
\ee
\be
\lbl{eq.52boundsb}
\frac{1}{2} (1 - \sqrt{1 + 4 N}) \leq k_2  \leq 
\frac{1}{2} (1 + \sqrt{1 + 8 N}) \, .
\ee
\end{subequations}
With these bounds, we can compute the coefficients
of $I_{5_2}(0)(q)$. The first 100 of them are given by: 

\vspace{0.3cm}
{\tiny
\begin{math}
1-4 q-q^2+16 q^3+26 q^4+23 q^5-34 q^6-122 q^7-239 q^8-312 q^9-221 q^{10}+102 q^{11}+778 q^{12}+1757 q^{13}+2930 q^{14}+3825 q^{15}+4003 q^{16}+2560 q^{17}-1183 q^{18}-8033 q^{19}-18087 q^{20}-30864 q^{21}-44625 q^{22}-56225 q^{23}-60913 q^{24}-52342 q^{25}-23373 q^{26}+33675 q^{27}+124356 q^{28}+251997 q^{29}+412837 q^{30}+596153 q^{31}+778487 q^{32}+925195 q^{33}+984860 q^{34}+895092 q^{35}+579789 q^{36}-39418 q^{37}-1039055 q^{38}-2474979 q^{39}-4370844 q^{40}-6691737 q^{41}-9326308 q^{42}-12059462 q^{43}-14553043 q^{44}-16329323 q^{45}-16762776 q^{46}-15091256 q^{47}-10436174 q^{48}-1863234 q^{49}+11551967 q^{50}+30594044 q^{51}+55785006 q^{52}+87178516 q^{53}+124185931 q^{54}+165312079 q^{55}+207965719 q^{56}+248191454 q^{57}+280543349 q^{58}+297911617 q^{59}+291580973 q^{60}+251299184 q^{61}+165675668 q^{62}+22662435 q^{63}-189540512 q^{64}-481437576 q^{65}-860708203 q^{66}-1330509280 q^{67}-1887333621 q^{68}-2518748267 q^{69}-3200856694 q^{70}-3895826567 q^{71}-4549488766 q^{72}-5089337005 q^{73}-5423179738 q^{74}-5438686309 q^{75}-5004368747 q^{76}-3972096155 q^{77}-2181963773 q^{78}+530655834 q^{79}+4324768774 q^{80}+9340768927 q^{81}+15683465230 q^{82}+23402429365 q^{83}+32468254286 q^{84}+42746925974 q^{85}+53970937518 q^{86}+65710530396 q^{87}+77343886238 q^{88}+88031000047 q^{89}+96690414035 q^{90}+101984965333 q^{91}+102316085357 q^{92}+95834146866 q^{93}+80464612248 q^{94}+53958404128 q^{95}+13966121550 q^{96}-41855327561 q^{97}-115704867879 q^{98}-209460627592 q^{99}-324467541887 q^{100}
\end{math}
}

\vspace{0.3cm}
The first 300 coefficients are available from \cite{Ga:data}. 

\subsection{The index of the $(-2,3,7)$ pretzel knot complement}
\lbl{sub.index237}

The $(-2,3,7)$ pretzel knot is the 12 crossing knot {\tt 12n242}  in 
the census. The default {\tt SnapPy} triangulation $\CT$ of the {\tt 12n242} complement
uses 3 ideal tetrahedra. The Epstein-Penner decomposition $\CT^{\EP}$ uses 
3 ideal tetrahedra. However, the two triangulations $\CT$ and $\CT^{\EP}$
are combinatorially different; for instance have edge-valencies $5,6,7$ and $5,5,8$
respectively. 
Nevertheless, both triangulations are geometric with canonical strict
angle structure and are related by geometric 2--3 moves, which preserve the index. 
The angle structure equations of $\CT$ are given by:

$$
\overline{\mb A} =
\left(
\begin{array}{ccc}
 1 & 0 & 0 \\
 0 & 1 & 1 \\
 1 & 1 & 1
\end{array}
\right)
\qquad
\overline{\mb B} =
\left(
\begin{array}{ccc}
 0 & 1 & 0 \\
 1 & 1 & 0 \\
 1 & 0 & 2 
\end{array}
\right)
\qquad
\overline{\mb C} =
\left(
\begin{array}{ccc}
 1 & 1 & 2 \\
 1 & 0 & 0 \\
 0 & 1 & 0
\end{array}
\right) \, .
$$

Apply the following operations on $(\overline{\mb A}|\overline{\mb B}|\overline{\mb C})$:
\begin{itemize}
\item
Permute the rows according to $(123) \mapsto (312)$.
\item
Permute the second and third columns of $\overline{\mb A}$ and simultaneously, of
 $\overline{\mb B}$ and  $\overline{\mb C}$.
\item
If $\bar{\boldsymbol a}_1, \bar{\boldsymbol b}_1, \bar{\boldsymbol c}_1$ are the first columns of $\overline{\mb A}$,  $\overline{\mb B}$ and  $\overline{\mb C}$,
then permute $(\bar{\boldsymbol a}_1| \bar{\boldsymbol b}_1 | \bar{\boldsymbol c}_1)\mapsto 
(\bar{\boldsymbol b}_1| \bar{\boldsymbol c}_1 | \bar{\boldsymbol a}_1)$.
\end{itemize}
After the above permutations, the matrix $(\overline{\mb A}|\overline{\mb B}|\overline{\mb C})$ of the $(-2,3,7)$ 
pretzel knot becomes the corresponding matrix of the $5_2$ knot. Since
the above permutations do not change the index, it follows that
$I_{(-2,3,7)}(0)=I_{5_2}(0)$.

\begin{exercise}
\lbl{exerc.237} 
Using the matrix $(\overline{\mb A}|\overline{\mb B}|\overline{\mb C})$ above, it follows that
the index of $\CT$ is given by
\begin{equation}
\lbl{eq.ind.237}
I_{(-2,3,7)}(0)(q)=\sum_{(k_1,k_2) \in \BZ^2} 
(-1)^{k_2} q^{\frac{1}{2} (k_1 - 2 k_2)}
\ID(k_1, -k_2) \ID(-k_1 + k_2, k_1) \ID(-2 k_1 + 2 k_2, k_1 - 2 k_2).
\end{equation}
On the other hand, the index of $5_2$ is given by Equation \eqref{eq.ind.52}.
Using the identities of the tetrahedron index from Section \ref{sec.review},
show that
\begin{align*}
\sum_{(k_1,k_2) \in \BZ^2} \ID(-k_1,k_1-k_2)^2 \ID(2 k_1-2 k_2,-k_1)
&= \\
\sum_{(k_1,k_2) \in \BZ^2} 
(-1)^{k_2} q^{\frac{1}{2} (k_1 - 2 k_2)}
  \ID(k_1, -k_2) \ID(-k_1 + k_2, k_1) \ID(-2 k_1 + 2 k_2, k_1 - 2 k_2). &
\end{align*}
\end{exercise}

\subsection{The index of the $6_1$ knot complement}
\lbl{sub.index61}

In this section we discuss the index $I_{6_1}(0)$ as an example of the
phenomenon of linear growth. 
The default {\tt SnapPy} triangulation $\CT$ of the $6_1$ knot  complement has
4 ideal tetrahedra, and the Epstein-Penner decomposition uses 6 ideal
tetrahedra. The two triangulations are related by geometric 2--3 moves
and so have equal indices.
The angle structure equations of $\CT$ are given by:

$$
\overline{\mb A} =
\left(
\begin{array}{cccc}
 1 & 1 & 0 & 0 \\
 0 & 0 & 0 & 1 \\
 0 & 1 & 1 & 0 \\
 1 & 0 & 1 & 1 
\end{array}
\right)
\qquad
\overline{\mb B} =
\left(
\begin{array}{cccc}
 0 & 2 & 0 & 1 \\
 1 & 0 & 1 & 0 \\
 1 & 0 & 0 & 0 \\
 0 & 0 & 1 & 1
\end{array}
\right)
\qquad
\overline{\mb C} =
\left(
\begin{array}{cccc}
 1 & 0 & 1 & 1 \\
 1 & 2 & 1 & 0 \\
 0 & 0 & 0 & 1 \\
 0 & 0 & 0 & 0
\end{array}
\right) \, .
$$

Eliminating $\overline{\mb B}$, and removing the fourth row (which corresponds 
to the fourth edge equation), we obtain the angle
structure equations
$$
\mb A' \a + \mb B' \gamma = \pi \nu' 
$$
where
$$
\mb A'=
\left(
\begin{array}{cccc}
 1 & -1 & 0 & -1 \\
 -1 & 0 & -1 & 1 \\
 -1 & 1 & 1 & 0 \\
\end{array}
\right)
\qquad 
\mb B'=
\left(
\begin{array}{cccc}
 1 & -2 & 1 & 0 \\
 0 & 2 & 0 & 0 \\
 -1 & 0 & 0 & 1 \\
\end{array}
\right)
\qquad 
\nu'=
\left(
\begin{array}{c}
 -1 \\
 0 \\
 1 \\
\end{array}
\right) \, .
$$
The index of $\CT$ is given by
\begin{align*}
I_{6_1}(0)(q) &= \sum_{(k_1,k_2,k_3) \in \BZ^3} 
q^{\frac{1}{2} (-k_1 + k_3)}
  \ID(-k_1, -k_2 + k_3) \ID(
  2 k_1 - 2 k_2, -k_1 + k_3) \\
&  \ID(-k_3, -k_1 + k_2) \ID(-k_1 + k_3, 
  k_1 - k_2 - k_3) \in \BZ[[q]] \, .
\end{align*}
$\calF_{\CT}$ in $\BR^3$ has 18 rays, corresponding to the columns of the matrix
$$
\left(
\begin{array}{cccccccccccccccccc}
 -1 & -1 & -1 & -1 & -1 & -1 & 0 & 0 & 0 & 0 & 0 & 0 & 1 & 1 & 1 & 1 & 2 & 3 \\
 -2 & -2 & -1 & 0 & 0 & 1 & -1 & -1 & -1 & 0 & 1 & 1 & 0 & 0 & 1 & 1 & 1 & 2 \\
 -3 & -1 & 0 & -1 & 0 & 0 & -2 & -1 & 0 & 1 & -1 & 0 & -1 & 1 & 0 & 1 & 1 & 1 \\
\end{array}
\right) \, .
$$
If $\d(k_1,k_2,k_3)$ denotes the degree of the summand and 
$N \in \BN$, then $\d(k_1,k_2,k_3) \leq N$ implies that $(k_1,k_2,k_3)$
satisfy the bounds
\begin{equation*}
\frac{1}{2} (1 - \sqrt{1 + 4 N}) \leq k_1  \leq 
\frac{1}{2} (-1 + \sqrt{1 + 12 N}) 
\end{equation*}
\begin{equation*}
\frac{2}{3} (1 - \sqrt{1 + 3 N}) \leq k_2  \leq 
\frac{1}{2} (-1 + \sqrt{1 + 8 N})
\end{equation*}
\begin{equation*}
\frac{3}{7} (1 - \sqrt{1 + 7 N}) \leq k_3  \leq 
N  \, .
\end{equation*}
Observe that the upper bound for $k_3$ is linear in $N$.
For instance, when $N=3$, the following 17 terms 
(each a product of 4 truncated tetrahedron indices, 
as in Equation \eqref{eq.IDtruncated} for $N=3$)
contribute to $I_{6_1}(0)+O(q)^4= 1-4 q+q^2+18 q^3+O(q)^4$: 

\vspace{0.3cm}
{\tiny
\begin{math}
\ID(-2,0)^2 \ID(0,-2) \ID(0,0)
-q^{-\frac{1}{2}} \ID(-1,-1) \ID(-1,0) \ID(0,-1) \ID(0,0)
+\ID(-1,0)^2 \ID(0,-1) \ID(0,0)+\ID(0,0)^4
\\ +\ID(-2,0) \ID(0,-1)^2 \ID(0,1)
-q^{\frac{1}{2}} \ID(-2,0) \ID(-1,1) \ID(0,1) \ID(1,-2)
-q^{\frac{1}{2}} \ID(-1,0) \ID(0,1)^2 \ID(1,-1) \\
-q^{-\frac{1}{2}} \ID(-1,1) \ID(0,-1)^2 \ID(1,0)
-q^{\frac{1}{2}} \ID(-2,1) \ID(0,1) \ID(1,-1) \ID(1,0)
+\ID(0,0) \ID(0,1) \ID(1,0)^2 \\
+\ID(-2,0) \ID(0,0) \ID(1,-1) \ID(1,1)
-q^{\frac{1}{2}} \ID(0,0) \ID(0,1) \ID(1,0) \ID(1,1)
+q \ID(-2,0) \ID(0,2)^2 \ID(2,-2) \\
-q^{-\frac{1}{2}}\ID(-2,0) \ID(-1,-1) \ID(-1,0) \ID(2,-1)
-q^{-\frac{1}{2}}\ID(-1,0) \ID(-1,1) \ID(0,-1) \ID(2,-1) \\
-q^{-\frac{1}{2}} \ID(-1,2) \ID(0,0) \ID(1,-1) \ID(2,-1))
-q^{\frac{3}{2}} \ID(-3,0) \ID(0,3)^2 \ID(3,-3)
\end{math}
}
\vspace{0.3cm}

The first 50 coefficients of $I_{6_1}(0)$ are given by:

\vspace{0.3cm}
{\tiny
\begin{math}
1-4 q+q^2+18 q^3+22 q^4+q^5-78 q^6-178 q^7-254 q^8-188 q^9+167 q^{10}+855 q^{11}+1864 q^{12}+2892 q^{13}+3426 q^{14}+2583 q^{15}-488 q^{16}-6698 q^{17}-16273 q^{18}-28550 q^{19}-41189 q^{20}-49943 q^{21}-48554 q^{22}-28899 q^{23}+17621 q^{24}+98726 q^{25}+217819 q^{26}+371551 q^{27}+544496 q^{28}+707360 q^{29}+811832 q^{30}+792301 q^{31}+565550 q^{32}+40436 q^{33}-872995 q^{34}-2241496 q^{35}-4087180 q^{36}-6354321 q^{37}-8877834 q^{38}-11348143 q^{39}-13283739 q^{40}-14014789 q^{41}-12685231 q^{42}-8288627 q^{43}+266720 q^{44}+14032731 q^{45}+33862808 q^{46}+60173861 q^{47}+92687285 q^{48}+130092845 q^{49}+169735693 q^{50}
\end{math}
}

\subsection{The index of the $7_2$ knot  complement}
\lbl{sub.index72}

The index $I_{7_2}(0)$ is another example of linear growth. 
The default {\tt SnapPy} triangulation $\CT$ of the $7_2$ knot  complement has
4 ideal tetrahedra, and the Epstein-Penner decomposition has 8 ideal
tetrahedra. The two triangulations are related by geometric 2--3 moves
and have equal indices. The angle structure equations of $\CT$ are given by:

$$
\overline{\mb A} =
\left(
\begin{array}{cccc}
 1 & 2 & 1 & 0 \\
 0 & 0 & 1 & 0 \\
 1 & 0 & 0 & 1 \\
 0 & 0 & 0 & 1 
\end{array}
\right)
\qquad
\overline{\mb B} =
\left(
\begin{array}{cccc}
 1 & 0 & 0 & 0 \\
 1 & 0 & 0 & 0 \\
 0 & 2 & 1 & 0 \\
 0 & 0 & 1 & 2 
\end{array}
\right)
\qquad
\overline{\mb C} =
\left(
\begin{array}{cccc}
 0 & 0 & 1 & 0 \\
 0 & 1 & 0 & 2 \\
 1 & 1 & 1 & 0 \\
 1 & 0 & 0 & 0
\end{array}
\right) \, .
$$
Eliminating $\overline{\mb B}$, and removing the fourth row (which corresponds 
to the fourth edge equation), we obtain the angle
structure equations
$$
\mb A' \a + \mb B' \gamma = \pi \nu' 
$$
where
$$
\mb A'=
\left(
\begin{array}{cccc}
 0 & 2 & 1 & 0 \\
 -1 & 0 & 1 & 0 \\
 1 & -2 & -1 & 1 \\
\end{array}
\right)
\qquad 
\mb B'=
\left(
\begin{array}{cccc}
 -1 & 0 & 1 & 0 \\
 -1 & 1 & 0 & 2 \\
 1 & -1 & 0 & 0 \\
\end{array}
\right)
\qquad 
\nu'=
\left(
\begin{array}{c}
1 \\
1 \\
-1 \\
\end{array}
\right) \, .
$$
The index of $\CT$ is given by
\begin{align*}
I_{7_2}(0)(q) &= \sum_{(k_1,k_2,k_3) \in \BZ^3} 
(-1)^{k_1+k_2-k_3} q^{\frac{1}{2} (k_1+k_2-k_3)} 
\ID(-k_1,k_1+k_2-k_3) \ID(-2 k_2,k_3) \\
 & \ID(k_1+k_2-k_3,-k_2+k_3) \ID(-k_2+k_3,2 k_1-2 k_3)
\in \BZ[[q]] \, .
\end{align*}
$\calF_{\CT}$ in $\BR^3$ has 14 rays, spanned by the columns of the matrix
$$
\left(
\begin{array}{cccccccccccccc}
 -2 & -1 & -1 & 0 & 0 & 0 & 0 & 0 & 0 & 1 & 1 & 1 & 1 & 2 \\
 -1 & -1 & 0 & -1 & -1 & -1 & 0 & 1 & 1 & 0 & 0 & 0 & 1 & 1 \\
 -2 & -1 & 0 & -2 & -1 & 0 & 1 & -1 & 0 & 0 & 1 & 2 & 1 & 3 \\
\end{array}
\right) \, .
$$
If $\d(k_1,k_2,k_3)$ denotes the degree of the summand and 
$N \in \BN$, then $\d(k_1,k_2,k_3) \leq N$ implies that $(k_1,k_2,k_3)$
satisfy the bounds
\begin{equation*}
1 - \sqrt{1 + 2 N} \leq k_1  \leq 
\frac{1}{2} (-1 + \sqrt{1 + 8 N}) 
\end{equation*}
\begin{equation*}
\frac{1}{2} (1 - \sqrt{1 + 4 N}) \leq k_2  \leq 
N 
\end{equation*}
\begin{equation*}
1 - \sqrt{1 + 2 N} \leq k_3  \leq 
\frac{1}{2} (-1 + \sqrt{1 + 12 N}) \, .
\end{equation*}
Observe that the upper bound for $k_3$ is linear in $N$.
The first 50 coefficients of $I_{7_2}(0)$ are given by:

\vspace{0.3cm}
{\tiny
\begin{math}
1-4 q+q^2+16 q^3+20 q^4+q^5-72 q^6-156 q^7-206 q^8-98 q^9+275 q^{10}+924 q^{11}+1740 q^{12}+2370 q^{13}+2227 q^{14}+495 q^{15}-3485 q^{16}-10168 q^{17}-19045 q^{18}-28467 q^{19}-34899 q^{20}-33157 q^{21}-16460 q^{22}+22305 q^{23}+89035 q^{24}+185478 q^{25}+306146 q^{26}+434575 q^{27}+539981 q^{28}+575717 q^{29}+479148 q^{30}+176483 q^{31}-408854 q^{32}-1340316 q^{33}-2648389 q^{34}-4301970 q^{35}-6179555 q^{36}-8036073 q^{37}-9477453 q^{38}-9942897 q^{39}-8710346 q^{40}-4925980 q^{41}+2323715 q^{42}+13897628 q^{43}+30430263 q^{44}+52111135 q^{45}+78414600 q^{46}+107796294 q^{47}+137380650 q^{48}+162674912 q^{49}+177363801 q^{50}
\end{math}
}


\subsection{Acknowledgments}
S.G. wishes to thank Tudor Dimofte, Josephine Yu and especially Don Zagier
for enlightening conversations. A preliminary report was delivered by 
S.G. during the Oberwolfach 8/2012 workshop on 
{\em Low-Dimensional Topology and Number Theory}; we thank the 
organizers, Paul E. Gunnells, Walter Neumann, Adam S. Sikora and Don Zagier, 
for their hospitality.

\appendix

\section{The 2--3 move}
\lbl{app.23}

For completeness, in this appendix we give a detailed proof of the invariance of the index under
2--3 moves, following \cite[Sec.6]{Ga:index} and also \cite{DGG2}.
Let $M$ be a cusped $3$-manifold and let 
$\varpi$ be an oriented 
multi-curve on $\bd M$.
Consider two ideal triangulations $\CT$ and $\wt{\CT}$ of $M$ with $N$ and $N+1$
tetrahedra, respectively, related by a 2--3 move as shown in Figure \ref{fig.23},
matching the conventions of \cite[Sec.3.6]{DG}. 

\begin{figure}[htpb]
\centering

\labellist
\large
\pinlabel 2--3 at 650 380
\pinlabel 3--2 at 650 240
\normalsize
\pinlabel 4 at 310 195
\pinlabel 0 at 310 555
\pinlabel 1 at 320 370
\pinlabel 3 at 135 418
\pinlabel 2 at 469 405

\pinlabel 4 at 1025 195
\pinlabel 0 at 1025 555
\pinlabel 1 at 1035 370
\pinlabel 3 at 850 418
\pinlabel 2 at 1184 405

\tiny
\pinlabel $\bar b_1$ at 951 35
\pinlabel $\bar c_1$ at 911 15
\pinlabel $\bar a_1$ at 991 15
\pinlabel $\bar b_2$ at 933 70
\pinlabel $\bar b_3$ at 964 70
\pinlabel $\bar a_2$ at 898 44
\pinlabel $\bar c_3$ at 999 44
\pinlabel $\bar c_2$ at 933 105
\pinlabel $\bar a_3$ at 964 105

\pinlabel $\bar b_1$ at 1242 585
\pinlabel $\bar c_1$ at 1200 603
\pinlabel $\bar a_1$ at 1280 603
\pinlabel $\bar b_2$ at 1230 550
\pinlabel $\bar b_3$ at 1258 550
\pinlabel $\bar a_2$ at 1193 577
\pinlabel $\bar c_3$ at 1296 577
\pinlabel $\bar c_2$ at 1230 510
\pinlabel $\bar a_3$ at 1258 510

\pinlabel $\bar a_3$ at 1234 318
\pinlabel $\bar c_1$ at 1264 318
\pinlabel $\bar c_3$ at 1234 244
\pinlabel $\bar a_1$ at 1264 244
\pinlabel $\bar b_3$ at 1212 280
\pinlabel $\bar b_1$ at 1286 280

\pinlabel $\bar a_2$ at 1154 161
\pinlabel $\bar c_3$ at 1184 161
\pinlabel $\bar c_2$ at 1154 87
\pinlabel $\bar a_3$ at 1184 87
\pinlabel $\bar b_2$ at 1132 123
\pinlabel $\bar b_3$ at 1206 123

\pinlabel $\bar a_1$ at 760 343
\pinlabel $\bar c_2$ at 790 343
\pinlabel $\bar c_1$ at 760 270
\pinlabel $\bar a_2$ at 790 270
\pinlabel $\bar b_1$ at 742 306
\pinlabel $\bar b_2$ at 812 306

\pinlabel $\bar a_0$ at 234 126
\pinlabel $\bar b_0$ at 172 19
\pinlabel $\bar c_0$ at 296 19

\pinlabel $\bar a_4$ at 527 490
\pinlabel $\bar b_4$ at 586 598
\pinlabel $\bar c_4$ at 468 598

\pinlabel $\bar a_0$ at 452 65
\pinlabel $\bar b_0$ at 478 106
\pinlabel $\bar c_0$ at 426 106
\pinlabel $\bar a_4$ at 452 180
\pinlabel $\bar c_4$ at 478 139
\pinlabel $\bar b_4$ at 426 139

\pinlabel $\bar c_0$ at 533 224
\pinlabel $\bar a_0$ at 559 265
\pinlabel $\bar b_0$ at 507 265
\pinlabel $\bar b_4$ at 535 339
\pinlabel $\bar a_4$ at 559 298
\pinlabel $\bar c_4$ at 507 298

\pinlabel $\bar b_0$ at 59 251
\pinlabel $\bar c_0$ at 85 290
\pinlabel $\bar a_0$ at 33 290
\pinlabel $\bar c_4$ at 61 364
\pinlabel $\bar b_4$ at 85 323
\pinlabel $\bar a_4$ at 33 323

\endlabellist

\includegraphics[width=\textwidth]{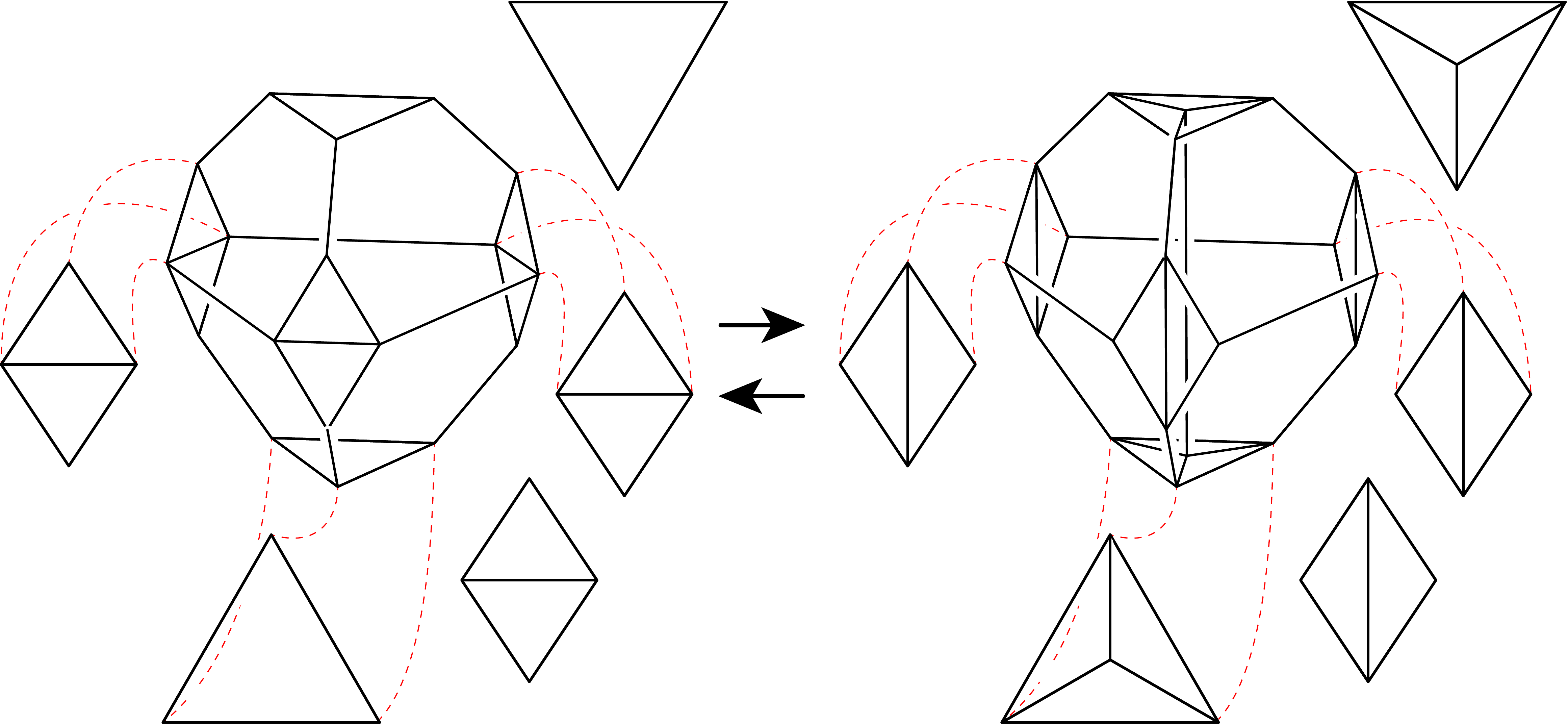}
\caption{The 2--3 move shown with truncated tetrahedra and labelled vertices. The five triangulated ends of the tetrahedra before and after are shown in detail in the ``zoomed in'' pictures. The labels on the corners of the triangles are explained in Section \ref{sub.peripheral}. All truncated triangulation pictures are viewed from outside the tetrahedra.}
\label{fig.23}
\end{figure}

 The main result of this section is the following.
 
 \begin{thm} 
 \label{2-3-invariance}

Suppose that $\CT$ and $\wt\CT$ are ideal triangulations related by a 
2--3 move and both admit an index structure. Then, for any $[\varpi] \in H_1(\bd M;\Z)$,
$I_{\CT}([\varpi])=I_{\wt\CT}([\varpi])$. 
 
 \end{thm}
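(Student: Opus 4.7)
The strategy is to reduce the equality $I_{\CT}([\varpi])=I_{\wt\CT}([\varpi])$ to an application of the pentagon identity \eqref{eq.pentagon}, via the $\JD$-reformulation \eqref{JD_index_eq} of the index. The pentagon identity algebraically relates a product of two tetrahedron indices to a $\Z$-sum of products of three indices, which is exactly the combinatorial content of a 2--3 move: two tetrahedra are replaced by three tetrahedra arranged around a new edge, and the summation variable in the pentagon will correspond to the weight on that new edge.

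First I would choose convenient representatives for the peripheral class. Using Theorem \ref{index_isotopy_invariance}, pick $\varpi$ normal with respect to $\CT_\bd$ and $\wt\varpi$ normal with respect to $\wt\CT_\bd$, both representing $[\varpi]$, which coincide away from the portion of $\bd M$ affected by the 2--3 move, and where $\wt\varpi$ passes straight through the new boundary triangles introduced by the move (so it contributes no new turning numbers there). Label the two tetrahedra of $\CT$ affected by the move as $T_0,T_4$ and the three new tetrahedra of $\wt\CT$ as $T_1,T_2,T_3$, as in Figure \ref{fig.23}, and write $e_{04}$ for the new edge from vertex $0$ to vertex $4$.

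Next, using Theorem \ref{pick_edges} together with Remark \ref{basis_indept}, I would choose an integer basis for the edge-equation lattice of $\wt\CT$ that includes the edge equation for $e_{04}$ and is compatible with a choice of basis for the edge-equation lattice of $\CT$ via a bijection of the remaining $N-r$ basic edges (both triangulations share these edges). Let $\kk$ denote weights on the common basic edges and let $k_{04}\in\Z$ denote the weight on $e_{04}$, so that $\wt\kk=(\kk,k_{04})$. With this setup, the terms of $I_{\wt\CT}(\wt\varpi)$ and $I_{\CT}(\varpi)$ obtained from \eqref{JD_index_eq} factor into a product over the tetrahedra not involved in the move (which match term by term between the two sums) times the contributions of $T_0,T_4$ and $T_1,T_2,T_3$ respectively. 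The heart of the proof is then the identity
\[
\sum_{k_{04}\in\Z} q^{k_{04}}\prod_{i=1}^{3}\JD\!\bigl(\bar a_i(\wt\kk,\wt\varpi),\bar b_i(\wt\kk,\wt\varpi),\bar c_i(\wt\kk,\wt\varpi)\bigr)
=\prod_{j\in\{0,4\}}\JD\!\bigl(\bar a_j(\kk,\varpi),\bar b_j(\kk,\varpi),\bar c_j(\kk,\varpi)\bigr),
\]
whose arguments on each side are linear in $\kk$ and the turning numbers of $\varpi$, read off from the corner labels in Figure \ref{fig.23}. Unpacking the definition \eqref{eq.JDdef} of $\JD$ and translating the pentagon identity \eqref{eq.pentagon} accordingly, this equality holds with $e_3$ identified (up to a linear shift) with $k_{04}$.

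The main obstacle is the combinatorial verification that the corner labels on the five tetrahedra in Figure \ref{fig.23} assemble into arguments matching the pentagon identity exactly: one must track the labels $\bar a,\bar b,\bar c$ around the triangulated ends in both configurations, then check that the contributions of the nine edges incident to $T_0\cup T_4$ distribute across the edges of $T_1,T_2,T_3$ in the manner predicted by the pentagon, and finally confirm that the $q^{k_{04}}$ factor from the new edge aligns with the $q^{e_3}$ factor in \eqref{eq.pentagon}. This is a finite and explicit check, but it requires careful bookkeeping of orientations and of which opposite edge pair each of $\bar a,\bar b,\bar c$ corresponds to in each tetrahedron. Convergence of the sum over $k_{04}$ and of the overall sums is automatic from the hypothesis that both $\CT$ and $\wt\CT$ admit index structures, since by Theorem \ref{index_structure_iff_1-efficient} both are then 1-efficient, so the indices are well-defined elements of $\Z((q^{1/2}))$ by Theorem \ref{thm.00}(a).
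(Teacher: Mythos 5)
Your proposal follows the same route as the paper's proof: it reduces the 2--3 invariance to a local ``pentagon equality for $\JD$'' over the bipyramid, obtained by summing over the weight $k_{04}$ on the new edge, and derives that from the pentagon identity \eqref{eq.pentagon}, handling the peripheral-curve normalization (via Theorem \ref{index_isotopy_invariance}) and basis choice (via Theorem \ref{pick_edges} and Remark \ref{basis_indept}) in the same way. The ``careful bookkeeping'' you correctly flag as the main obstacle is precisely what the paper's Lemma \ref{lem_bipyramid} carries out in detail, by rewriting the $\JD$-arguments in terms of the quantities $\bar A_j, \bar B_j, \bar C_j$ and verifying five identities among them at each of the bipyramid's vertices before matching against \eqref{eq.pentagon}.
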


 Recall that the index is not changed by isotopies of $\varpi$ and, from (\ref{JD_index_eq}),
 can be written in the form 
$$
I_{\CT}(\varpi)(q)  = \sum_{\kk \in \Z^{N-r}} 
 q^{\sum_i k_i}   \prod_j 
J(T_j;\kk,\varpi)
$$
where the contribution from tetrahedron $T_j$ is
$$J(T_j;\kk,\varpi)=\JD(\bar a_j(\kk,\varpi), \bar b_j(\kk,\varpi) , \bar c_j(\kk,\varpi)).$$
Here we can think of $\kk$ as a weight on the $N$ edges of $\tri$ and the summation
 is over 
 $\Z^{N-r} \subset \Z^{N}$ corresponding to a set $\mathcal B$ of $N-r$ basic edges
 as in Theorem \ref{pick_edges}.

\begin{defn} 
Given $\varpi$ as above and $\kk \in \Z^N$, let $\varpi_{\kk}$ denote a collection of disjoint oriented normal curves in $\bd M$ obtained from $\varpi$ by adding $k_i$ small linking circles around the vertex at one end of the $i$th edge in $\CT$.
(The circles are oriented anticlockwise if $k_i>0$ and clockwise if $k_i<0$.)
\end{defn}

By Remark \ref{edge_eqn_as_holonomy}, 
we can think of the coefficient $\bar a_j(\kk,\varpi)$ 
as the turning number $\bar a_{\varpi_{\kk},j}$, as defined in Section 4.4,  of  $\varpi_{\kk}$.
Thus $\bar a_j(\kk,\varpi)$ is a sum of 4 turning numbers of the multi-curve $\varpi_{\kk}$ 
as it turns around the $\bar a_j$ corners of the triangles of the $j$th truncated tetrahedron, 
and similarly for $\bar b_j(\kk,\varpi)$ and $\bar c_j(\kk,\varpi)$. This point of view allows us to treat the contributions from edge weights in the index calculation in the same way as we treat the contributions from peripheral curves.\\

The triangulations $\CT$ and $\wt{\CT}$ only differ inside a bipyramid, and we label its
 vertices $0,1,2,3,4$ as in Figure \ref{fig.23}, where $0$ and $4$ are the north and south poles,
and $1,2,3$ are on the equator. 
This determines 5 tetrahedra $T_0, \ldots ,T_4$ where $T_i$ is the tetrahedron ``opposite'' vertex $i$
with vertex set obtained by omitting vertex $i$ from the set $\{0,1,2,3,4\}$.   
The bipyramid can be decomposed 
into the two tetrahedra $T_0,T_4$ which contain the triangle $123$, 
or into the three tetrahedra $T_1,T_2,T_3$ which contain the edge $04$.
 Then it suffices to prove the following lemma.

\begin{lemma}[Pentagon equality for $J$] \label{lem_bipyramid} 
Let $\varpi$ be an oriented multi-curve on $\bd M$ as above. 
Let $\kk=(k_1, \ldots, k_N)$ be a weight function on the edges of $\tri$,
and let $\wt \kk =(k_0,\kk)$ be an extension of $\kk$ to a weight function on the edges of $\wt \tri$
where $k_0$ is the weight on the new edge ${04}$  introduced in the 2--3 move.
Then, with the notation above,
\be
\sum_{k_{0} \in \Z}   q^{k_{0}}   J(T_1;(k_0,\kk),\varpi) J(T_2;(k_0,\kk),\varpi) J(T_3;(k_0,\kk),\varpi)
= J(T_0;\kk,\varpi) J(T_4;\kk,\varpi).
\label{bipyramid-23-identity}
\ee
\end{lemma}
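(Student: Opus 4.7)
The plan is to reduce Lemma \ref{lem_bipyramid} to the pentagon identity \eqref{eq.pentagon} for the tetrahedron index $\ID$, translated into a pentagon identity for the symmetric function $\JD$. Since the 2--3 move is local to the bipyramid and the triangulations $\CT$ and $\wt\CT$ agree on all other tetrahedra, the contributions from tetrahedra outside the bipyramid cancel out of any would-be identity for $I_{\CT}$ vs.\ $I_{\wt\CT}$, leaving exactly the identity \eqref{bipyramid-23-identity} to verify locally.

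First, starting from \eqref{eq.pentagon} and using the conversion formula \eqref{eq.JDdef} together with the permutation invariance of $\JD$ and the scaling relation \eqref{eq.JD_add_tet_sol}, I would derive a pentagon identity of the form
\begin{equation*}
\JD(a_0, b_0, c_0)\, \JD(a_4, b_4, c_4) \;=\; \sum_{k_0 \in \Z} q^{k_0}\, \JD(\alpha_1)\, \JD(\alpha_2)\, \JD(\alpha_3),
\end{equation*}
where each $\alpha_i = \alpha_i(a_0, b_0, c_0, a_4, b_4, c_4; k_0)$ is a specific triple of affine forms. A natural such form, matching the geometry of the 2--3 move, should place the summation variable $k_0$ in one slot of each of the three $\JD$ factors on the right, corresponding to the new $04$ edge which touches exactly the tetrahedra $T_1,T_2,T_3$.

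Second, for each $T_i$ in the bipyramid I would read off $(\bar a_i,\bar b_i,\bar c_i)(\wt\kk, \varpi)$ directly from the corner labels in Figure \ref{fig.23}, expressing each as a sum of turning numbers of $\varpi$ together with contributions from the anticlockwise linking circles induced by the edge weights. The key observation is that every edge of the bipyramid other than $04$ is present in both triangulations and contributes linking circles at the same five ideal vertices, and that the peripheral curve $\varpi$ can be chosen to look the same on both sides of the move away from the bipyramid. One then checks that the resulting affine expressions for $(\bar a_i,\bar b_i,\bar c_i)$, for $i=0,1,2,3,4$, fit the template $\alpha_i$ obtained in Step 1, after using symmetry of $\JD$ to align arguments and \eqref{eq.JD_add_tet_sol} to absorb constant shifts from vertex contributions.

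The main obstacle is this combinatorial bookkeeping: there are twenty triangle corners in the bipyramid on the $\CT$-side and thirty on the $\wt\CT$-side, and one must carefully verify that the turning-number sums defining $\bar a_i,\bar b_i,\bar c_i$ on the two sides differ by exactly the shifts predicted by the pentagon identity for $\JD$, with $k_0$ playing the role of the pentagon summation variable $e_3$ in \eqref{eq.pentagon}. The argument is essentially a face-by-face check guided by Figure \ref{fig.23}, but care is needed to track the signs arising from the orientation of $\varpi$ and the clockwise/anticlockwise convention for the linking circles $\varpi_{\wt\kk}$, particularly at the corners surrounding the endpoints $0$ and $4$ of the new edge, which are the only vertices whose link triangulation changes under the move.
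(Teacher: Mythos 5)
Your proposal matches the paper's strategy: isotope $\varpi$ so it looks standard on the bipyramid, convert each $J(T_j;\cdot,\varpi)$ to $\ID$, apply the pentagon identity \eqref{eq.pentagon} with $k_0$ as the summation index, and verify corner-by-corner using Figure \ref{fig.23}. The combinatorial bookkeeping you flag as the main obstacle is exactly what the paper spends its effort on, taming it by introducing the intersection numbers $\bar A_j=\bar b_j-\bar c_j$, $\bar B_j=\bar c_j-\bar a_j$, $\bar C_j=\bar a_j-\bar b_j$, proving the key relation $\bar B_1+\bar B_2+\bar B_3=0$, and then checking a short list of affine identities at each of the five vertices of the bipyramid (with $\varpi$ normalized to have zero turning numbers at the central vertex of the 1--3 moves at the poles).
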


Theorem \ref{2-3-invariance}
now follows immediately: In the sum \eqref{JD_index_eq} for
$I_\CT$ we choose a set $\mathcal X$ of excluded edges from a maximal tree with 1- or 3-cycle for $\CT$ 
as in Theorem \ref{pick_edges}. Then the {\em same} set of edges can be excluded from the
summation for $I_{\wt{\CT}}$, so the summation for $I_{\wt{\CT}}$ is over the
original set of basic edges  $\mathcal B$ for $\CT$ together with the new edge introduced in the 2--3 move.

\begin{proof}[Proof of Lemma] 
The plan is as follows. First, we isotope $\varpi$ if necessary, so that it intersects the truncated ends of the bipyramid involved in the 2--3 move in a standardised way. For a given $\kk$, we consider $\varpi_{\kk}$, and the $\varpi_{\wt\kk}$ where $\wt \kk =(k_0,\kk)$ and $k_0\in\BZ$. We calculate the left hand side of \eqref{bipyramid-23-identity} using the $\varpi_{\wt\kk}$, and use the original version of the pentagon equality, \eqref{eq.pentagon}, to show that it is equal to the right hand side of \eqref{bipyramid-23-identity}, calculated using $\varpi_{\kk}.$

So, first we arrange $\varpi$ appropriately. The induced triangulations of $\bd M$, $\CT_\bd$ and $\wt \CT_\bd$, determined by $\CT$ and $\wt{\CT}$ respectively are related by 1--3 and 2--2 moves
as seen in Figure \ref{fig.23}.  We isotope $\varpi$ if necessary, so that it is normal relative to both of the two induced triangulations of $\bd M$.

At each of the polar vertices of the bipyramid 
there is a 1--3 move on the boundary, and 
we can assume, after a further isotopy if necessary, that $\varpi$ is represented by collections of oriented normal arcs as shown in Figure \ref{1-3move}, with all turning numbers 0 at the new central vertex. 

 \begin{figure}[htpb]
 \begin{center} 
 \labellist
\pinlabel $\bar a$ at 103 146
\pinlabel $\bar b$ at 30 20
\pinlabel $\bar c$ at 176 20

\pinlabel $\bar a$ at 320 146
\pinlabel $\bar a$ at 336 146
\pinlabel $\bar b$ at 254 29
\pinlabel $\bar b$ at 264 11
\pinlabel $\bar c$ at 402 29
\pinlabel $\bar c$ at 390 11

\endlabellist
 \includegraphics[width=12cm]{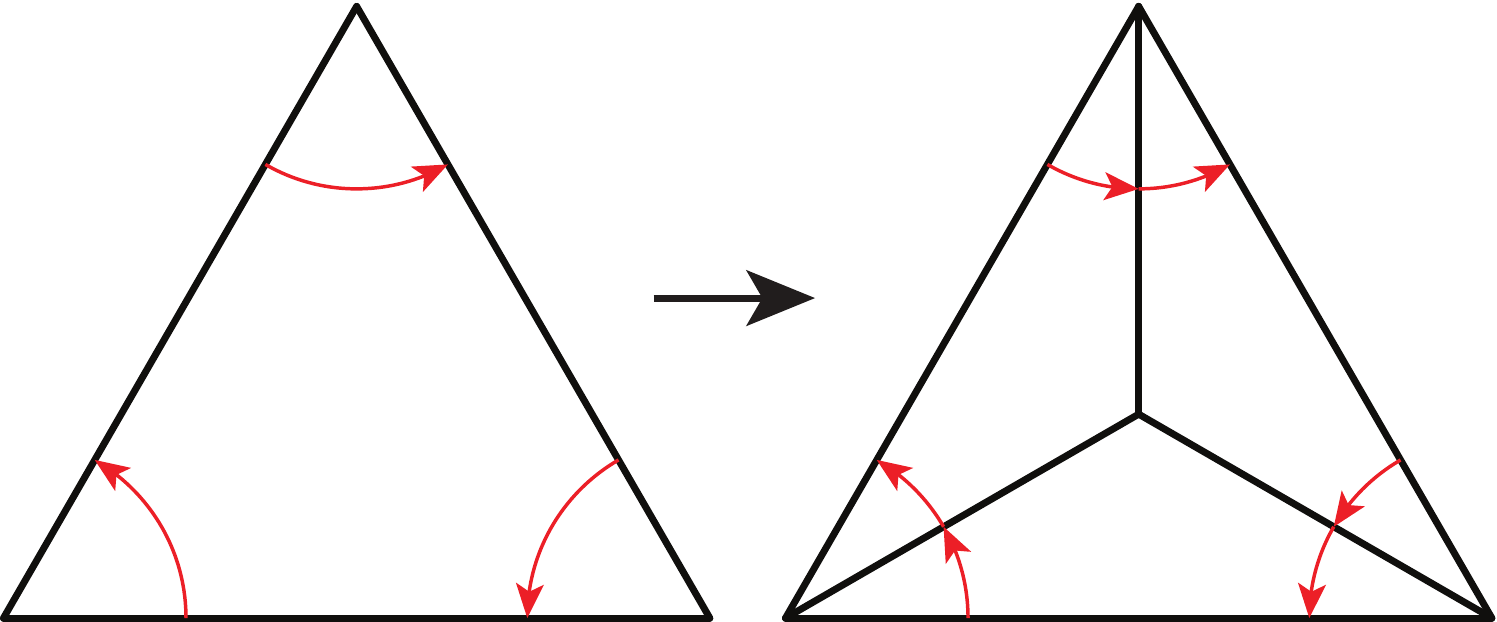}
 \caption{Change in turning numbers under a 1--3 move}
  \label{1-3move}
   \end{center}
 \end{figure}
 
 At each of the equatorial vertices 
there is a 2--2 move on the boundary, and we can represent $\varpi$ 
by a collection of oriented normal arcs in a quadrilateral of the six types shown in the right hand side of Figure \ref{quad_paths}.
(Note that either $\bar s=0$ or $\bar t=0$ for an embedded normal curve $\varpi$.)

\begin{figure}[htpb]
\begin{center}
 \labellist

\pinlabel 1 at 35 133
\pinlabel 2 at 35 31
\pinlabel 4 at 130 133
\pinlabel 3 at 130 31

\pinlabel 1 at 237.5 133
\pinlabel 2 at 237.5 31
\pinlabel 4 at 332.5 133
\pinlabel 3 at 332.5 31

\pinlabel 1 at 440 133
\pinlabel 2 at 440 31
\pinlabel 4 at 535 133
\pinlabel 3 at 535 31

\small

\pinlabel $\bar y$ at 82 140
\pinlabel $\bar y'$ at 83 25

\pinlabel $\bar y$ at 487.5 140
\pinlabel $\bar y'$ at 488.5 25

\pinlabel $\bar b$ at 227 83
\pinlabel $\bar b'$ at 343 83

\pinlabel $\bar b$ at 429.5 83
\pinlabel $\bar b'$ at 545.5 83

\pinlabel $\bar z$ at 28 93
\pinlabel $\bar x'$ at 28 74
\pinlabel $\bar x$ at 137 93
\pinlabel $\bar z'$ at 137 74

\pinlabel $\bar a$ at 275 139
\pinlabel $\bar c'$ at 295 140
\pinlabel $\bar c$ at 275 25
\pinlabel $\bar a'$ at 295 26.5

\pinlabel $\bar s$ at 500 105
\pinlabel $\bar t$ at 500 58

\endlabellist
\includegraphics[scale = 0.8]{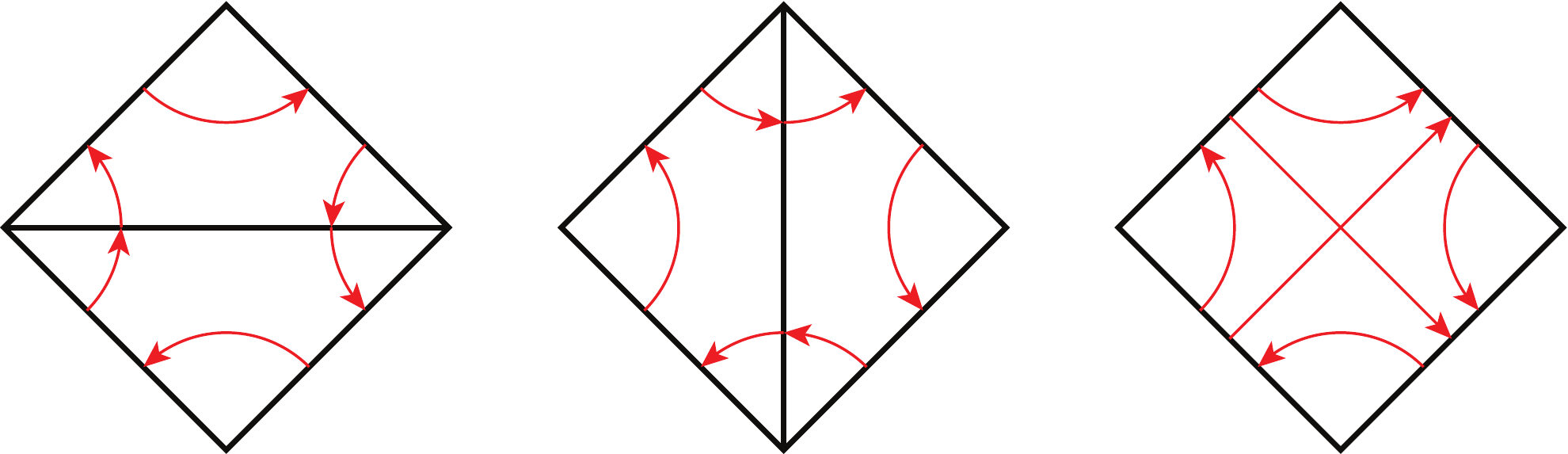}
\caption{Change in turning numbers under a 2--2 move}
\label{quad_paths}
\end{center}
\end{figure}

Then the corresponding turning numbers at the corners of the triangles in the left and centre of the figure are given by:
$$\bar a=\bar y+\bar t,~\bar c=\bar y'-\bar s,~\bar a'=\bar y'-\bar t,~\bar c'=\bar y+\bar s$$
and
$$\bar x=\bar b'-\bar s,~\bar z=\bar b-\bar t,~\bar x'=\bar b+\bar s,~\bar z'=\bar b'+\bar t.$$
For example, $a$ is the signed number of (anticlockwise) normal arcs going
from edge 1 to the vertical edge in the centre of the figure. But each such arc is a normal arc in the quadrilateral going from edge 1 to 3 or from edge 1 to edge 4, hence $\bar a=\bar y+\bar t$.\\

In particular, we have the following relations that will be used below:
\be
\label{obs1}
\bar a+\bar a'=\bar c+\bar c'=\bar y+\bar y' \text{ and } \bar x+\bar x'=\bar z+\bar z'=\bar b+\bar b'
\ee
\be
\label{obs2}
2\bar b+\bar c'-\bar a=2\bar b + \bar a'-\bar c= \bar x'+\bar z  \text{ and }  2\bar b' + \bar c - \bar a' = 2\bar b' + \bar a - \bar c' = \bar x + \bar z'.
\ee

Now $\varpi_{\wt\kk}$ is obtained from $\varpi$ by adding small vertex linking circles around the vertices of $\wt\CT_\bd$, and similarly  $\varpi_{\kk}$ is obtained from $\varpi$ by adding small vertex linking circles around the vertices of $\CT_\bd$. Note that we have a map from the turning numbers of $\varpi$ relative to $\CT$ to the turning numbers of $\varpi$ relative to $\wt\CT$. If we apply the same map to the turning numbers of $\varpi_{\kk}$ relative to $\CT$, we obtain the turning numbers of $\varpi_{(0,\kk)}$ relative to $\wt\CT$. 

\medskip
In the following, we will abbreviate our notation as follows.
Let 
\begin{align*}
\label{abc_abbeviations}
& \bar a_j=\bar a_j(\kk,\varpi),~ \bar b_j = \bar b_j(\kk,\varpi),~ \bar c_j = \bar c_j(\kk,\varpi)
\text { for } j=0,4 \\
& \bar a_j=\bar a_j((0,\kk),\varpi),~ \bar b_j = \bar b_j((0,\kk),\varpi),~ \bar c_j = \bar c_j((0,\kk),\varpi)
\text { for } j=1,2,3.
\end{align*}
Recall that $\bar a_j(\kk,\varpi)$ is the sum of $\bar a_{\varpi,j}$ and the weights given by $\kk$ on the two edges of 
tetrahedron $j$ labelled $\bar a_j$, and similarly for $\bar b_j(\kk,\varpi)$, $\bar c_j(\kk,\varpi)$.
Thus, replacing $\kk$ by $\wt \kk = (k_0,\kk)$ does not change $\bar a_j(\kk,\varpi)$
or $\bar c_j(\kk,\varpi)$ but we have $ \bar b_j ( \wt\kk,\varpi) = \bar b_j + k_0$ for $j=1,2,3$,
since the new edge $04$ 
is incident to angles labelled $\bar b_j$ in Figure \ref{fig.23}.\\

We also let $\bar A_j = \bar b_j-\bar c_j, \bar B_j = \bar c_j-\bar a_j, \bar C_j=\bar a_j-\bar b_j$, 
and note that each of these is a sum of contributions from the 4 triangular corners of the 
(truncated) tetrahedron $T_j$. On each of these triangles,  $\bar A_j,\bar B_j,\bar C_j$ represent inward intersection
numbers of oriented normal arcs with the sides of the triangles as shown in Figure \ref{turn_intersect}.

 \begin{figure}[htpb]
 \begin{center} 
 \labellist
\pinlabel $\bar a$ at 103 184
\pinlabel $\bar A$ at 103 7
\pinlabel $\bar b$ at 30 57
\pinlabel $\bar C$ at 25 144
\pinlabel $\bar c$ at 176 57
\pinlabel $\bar B$ at 183 144

\endlabellist
 \includegraphics[width=6cm]{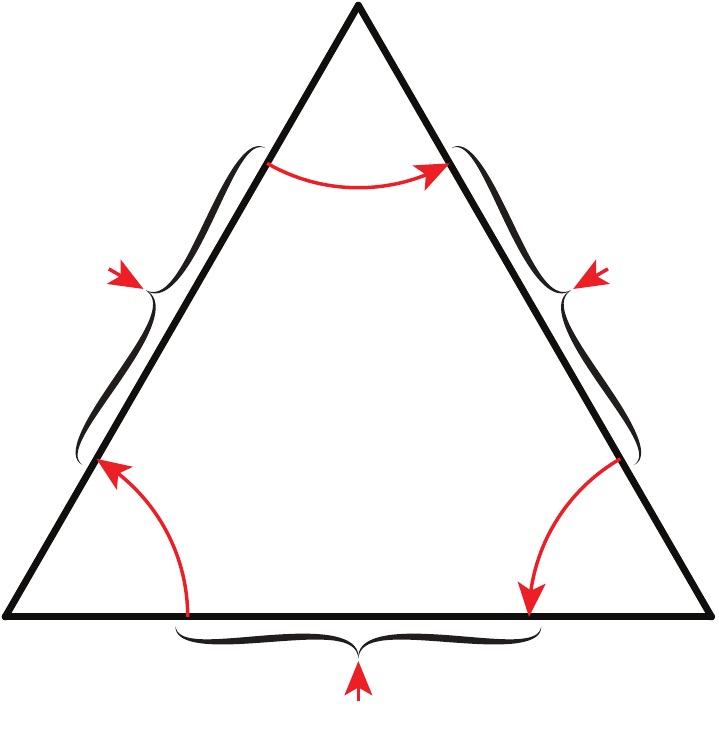}
\caption{Intersection numbers and turning numbers in a triangle.}
\label{turn_intersect}
  \end{center}
  \end{figure}

Then we have
\be
 \bar A_j + \bar B_j + \bar C_j =0 \text{ for each } j,
 \ee
 and 
\be
\bar B_1+\bar B_2+\bar B_3=0 .
\label{B_sum=0}
\ee
To prove the last equation, we look at the contribution from each of the 5 corners of the bipyramid
as shown in Figure \ref{all_corners_of_ bipyramid}.
At the polar vertices $0,4$ the contribution to 
$\sum_i \bar B_i= \sum_{i=1}^3 \bar a_i - \sum_{i=1}^3 \bar c_i$ is zero from Figure  \ref{1-3move},
while at the equatorial vertices $1,2,3$ this follows from (\ref{obs1}). Alternatively, one can look at the vertices $1,2,3$ as shown in Figure \ref{all_corners_of_ bipyramid}, and see that, for example, at vertex $1$ the contribution to $B_2$ is cancelled by the contribution to $B_3$. Similar cancellations happen at vertices $2$ and $3$.\\

Now consider the left hand side of (\ref{bipyramid-23-identity}). We have
$$J(T_1;(k_0,\kk),\varpi) = (-q^{\frac{1}{2}})^{-\bar c_1} \ID(\bar c_1-\bar a_1,\bar b_1+k_0-\bar c_1) =  
(-q^{\frac{1}{2}})^{-\bar c_1}\ID (\bar B_1,\bar A_1+k_0),$$
$$J(T_2;(k_0,\kk),\varpi) = (-q^{\frac{1}{2}})^{-\bar c_2} \ID(\bar c_2-\bar a_2,\bar b_2+k_0-\bar c_2) =  
(-q^{\frac{1}{2}})^{-\bar c_2}\ID (\bar B_2,\bar A_2+k_0),$$
and
\begin{align*}
J(T_3;(k_0,\kk),\varpi) &= (-q^{\frac{1}{2}})^{-\bar a_3}\ID (\bar a_3-\bar b_3-k_0,\bar c_3-\bar a_3) 
= (-q^{\frac{1}{2}})^{-\bar a_3}\ID (\bar C_3-k_0,\bar B_3) \\
&=(-q^{\frac{1}{2}})^{-\bar a_3}\ID (-\bar B_3,-\bar C_3+k_0) 
= (-q^{\frac{1}{2}})^{-\bar a_3}\ID (\bar B_1+\bar B_2,-\bar C_3+k_0)
\end{align*}
using  the duality identity (\ref{eq.Z2}) and (\ref{B_sum=0}).\\

The pentagon identity (\ref{eq.pentagon}) can be rewritten
(by replacing $e_3$ by $x_3 + e_0$)
in the form
\begin{align}
\sum_{e_0 \in \BZ}
& q^{e_0} \ID(m_1,x_1+e_0)\ID(m_2,x_2+e_0)\ID(m_1+m_2,x_3+e_0)   \label{pent_eq}\\
&=q^{-x_3} \ID(m_1-x_2+x_3,x_1-x_3)\ID(m_2-x_1+x_3,x_2-x_3) . \nonumber
\end{align}

We apply this with $e_0=k_{0}, m_1=\bar B_1,m_2=\bar B_2, x_1=\bar A_1,x_2=\bar A_2$ and
$x_3=-\bar C_3$.
Then direct calculations, using the observations above, show that 
\begin{enumerate}
\item[(i)] $m_1-x_2+x_3= \bar B_1-\bar A_2-\bar C_3=\bar A_0$, 
\item[(ii)] $x_1-x_3=\bar A_1+\bar C_3 =\bar C_0$, 
\item[(iii)] $m_2-x_1+x_3=\bar B_2-\bar A_1-\bar C_3=\bar B_4$, 
\item[(iv)] $x_2-x_3=\bar A_2+\bar C_3=\bar A_4$, and 
\item[(v)] $\bar c_1+\bar c_2+\bar a_3+2x_3 = \bar b_0+\bar c_4$ hence 
$(-q^{\frac{1}{2}})^{-\bar c_1-\bar c_2-\bar a_3} q^{-x_3}=  (-q^{\frac{1}{2}})^{ - \bar b_0-\bar c_4}.$
\end{enumerate}

Thus, multiplying (\ref{pent_eq}) by $(-q^{\frac{1}{2}})^{-\bar c_1-\bar c_2-\bar a_3}$ gives
\begin{align*}
\sum_{k_{0} \in \Z}   q^{k_{0}}   J(T_1;(k_0,\kk),\varpi) J(T_2;(k_0,\kk),\varpi) J(T_3;(k_0,\kk),\varpi) &=
(-q^{\frac{1}{2}})^{-\bar b_0}\ID(\bar A_0,\bar C_0) (-q^{\frac{1}{2}})^{-\bar c_4}\ID(\bar B_4,\bar A_4) \\
&= J(T_0;\kk,\varpi) J(T_4;\kk,\varpi),
\end{align*}
as desired.\\

 \begin{figure}[p]
 \begin{center} 
  \labellist
\pinlabel {Vertex 1} at -70 899  
\pinlabel {Vertex 2} at -70 714
\pinlabel {Vertex 3} at -70 529  
\pinlabel {Vertex 0} at -70 319 
\pinlabel {Vertex 4} at -70 119 
  
\footnotesize
\pinlabel $\bar C_4$ at 57.5 967
\pinlabel $\bar B_4$ at 191.5 967
\pinlabel $\bar B_0$ at 57.5 833
\pinlabel $\bar C_0$ at 191.5 833
 
\pinlabel $\bar C_2$ at 241 967
\pinlabel $\bar A_3$ at 375 967
\pinlabel $\bar A_2$ at 241 833
\pinlabel $\bar C_3$ at 375 833
 
\pinlabel $\bar A_4$ at 57.5 782.5
\pinlabel $\bar C_4$ at 191.5 782.5
\pinlabel $\bar A_0$ at 57.5 648.5
\pinlabel $\bar B_0$ at 191.5 648.5
 
\pinlabel $\bar C_3$ at 241 782.5
\pinlabel $\bar A_1$ at 375 782.5
\pinlabel $\bar A_3$ at 241 648.5
\pinlabel $\bar C_1$ at 375 648.5
 
\pinlabel $\bar B_4$ at 57.5 598
\pinlabel $\bar A_4$ at 191.5 598
\pinlabel $\bar C_0$ at 57.5 464
\pinlabel $\bar A_0$ at 191.5 464
 
\pinlabel $\bar C_1$ at 241 598
\pinlabel $\bar A_2$ at 375 598
\pinlabel $\bar A_1$ at 241 464
\pinlabel $\bar C_2$ at 375 464

\pinlabel $\bar A_0$ at 139 922
\pinlabel $\bar A_4$ at 139 874 

\pinlabel $\bar C_0$ at 139 737.5
\pinlabel $\bar B_4$ at 139 689.5

\pinlabel $\bar B_0$ at 139 553
\pinlabel $\bar C_4$ at 139 505

\pinlabel $\bar B_3$ at 286 884 
\pinlabel $\bar B_2$ at 327 884 

\pinlabel $\bar B_1$ at 286 699.5
\pinlabel $\bar B_3$ at 327 699.5

\pinlabel $\bar B_2$ at 286 515
\pinlabel $\bar B_1$ at 327 515

\pinlabel $\bar b_2$ at 251 899
\pinlabel $\bar b_3$ at 365 899

\pinlabel $\bar b_3$ at 251 714.5
\pinlabel $\bar b_1$ at 365 714.5

\pinlabel $\bar b_1$ at 251 530
\pinlabel $\bar b_2$ at 365 530

\pinlabel $\bar b_4$ at 71 910
\pinlabel $\bar c_0$ at 71 888
\pinlabel $\bar c_4$ at 177 910
\pinlabel $\bar b_0$ at 177 888

\pinlabel $\bar c_4$ at 71 725.5
\pinlabel $\bar b_0$ at 71 703.5
\pinlabel $\bar a_4$ at 177 725.5
\pinlabel $\bar a_0$ at 177 703.5

\pinlabel $\bar a_4$ at 71 543
\pinlabel $\bar a_0$ at 71 521
\pinlabel $\bar b_4$ at 177 543
\pinlabel $\bar c_0$ at 177 521

\pinlabel $\bar a_2$ at 297 954
\pinlabel $\bar c_3$ at 319 954
\pinlabel $\bar c_2$ at 297 846
\pinlabel $\bar a_3$ at 319 846

\pinlabel $\bar a_3$ at 297 769.5
\pinlabel $\bar c_1$ at 319 769.5
\pinlabel $\bar c_3$ at 297 661.5
\pinlabel $\bar a_1$ at 319 661.5

\pinlabel $\bar a_1$ at 297 585
\pinlabel $\bar c_2$ at 319 585
\pinlabel $\bar c_1$ at 297 477
\pinlabel $\bar a_2$ at 319 477

\pinlabel $\bar a_4$ at 126 954
\pinlabel $\bar a_0$ at 126 846

\pinlabel $\bar b_4$ at 126 769.5
\pinlabel $\bar c_0$ at 126 661.5

\pinlabel $\bar c_4$ at 126 585
\pinlabel $\bar b_0$ at 126 477

\pinlabel $\bar A_4$ at 104 434
\pinlabel $\bar B_1$ at 329 434
\pinlabel $\bar A_0$ at 104 -10
\pinlabel $\bar B_1$ at 329 -10

\pinlabel $\bar B_4$ at 20 295
\pinlabel $\bar C_4$ at 187 295
\pinlabel $\bar C_0$ at 20 131
\pinlabel $\bar B_0$ at 187 131

\pinlabel $\bar B_2$ at 245 295
\pinlabel $\bar B_3$ at 412 295
\pinlabel $\bar B_2$ at 245 131
\pinlabel $\bar B_3$ at 412 131

\pinlabel $\bar A_2$ at 310 390
\pinlabel $\bar C_3$ at 346 390
\pinlabel $\bar C_2$ at 310 36
\pinlabel $\bar A_3$ at 346 36

\pinlabel $\bar C_1$ at 280 335
\pinlabel $\bar A_1$ at 376 335
\pinlabel $\bar A_1$ at 280 92
\pinlabel $\bar C_1$ at 376 92

\pinlabel $\bar A_3$ at 298 306
\pinlabel $\bar C_2$ at 358 306
\pinlabel $\bar C_3$ at 298 122
\pinlabel $\bar A_2$ at 358 122

\pinlabel $\bar b_1$ at 329 358
\pinlabel $\bar b_2$ at 315 335
\pinlabel $\bar b_3$ at 343 335

\pinlabel $\bar b_1$ at 329 66
\pinlabel $\bar b_2$ at 315 91
\pinlabel $\bar b_3$ at 343 91

\pinlabel $\bar a_1$ at 266 391.5
\pinlabel $\bar c_1$ at 392 391.5

\pinlabel $\bar c_1$ at 266 34
\pinlabel $\bar a_1$ at 392 34

\pinlabel $\bar c_2$ at 255 373.5
\pinlabel $\bar a_3$ at 403 373.5

\pinlabel $\bar a_2$ at 255 52
\pinlabel $\bar c_3$ at 403 52

\pinlabel $\bar a_2$ at 319 263
\pinlabel $\bar c_3$ at 339 263
\pinlabel $\bar c_2$ at 319 163
\pinlabel $\bar a_3$ at 339 163

\pinlabel $\bar a_4$ at 104 259
\pinlabel $\bar a_0$ at 104 167

\pinlabel $\bar c_4$ at 30 385
\pinlabel $\bar b_4$ at 178 385

\pinlabel $\bar b_0$ at 30 42
\pinlabel $\bar c_0$ at 178 42

\endlabellist
\includegraphics[height=22cm]{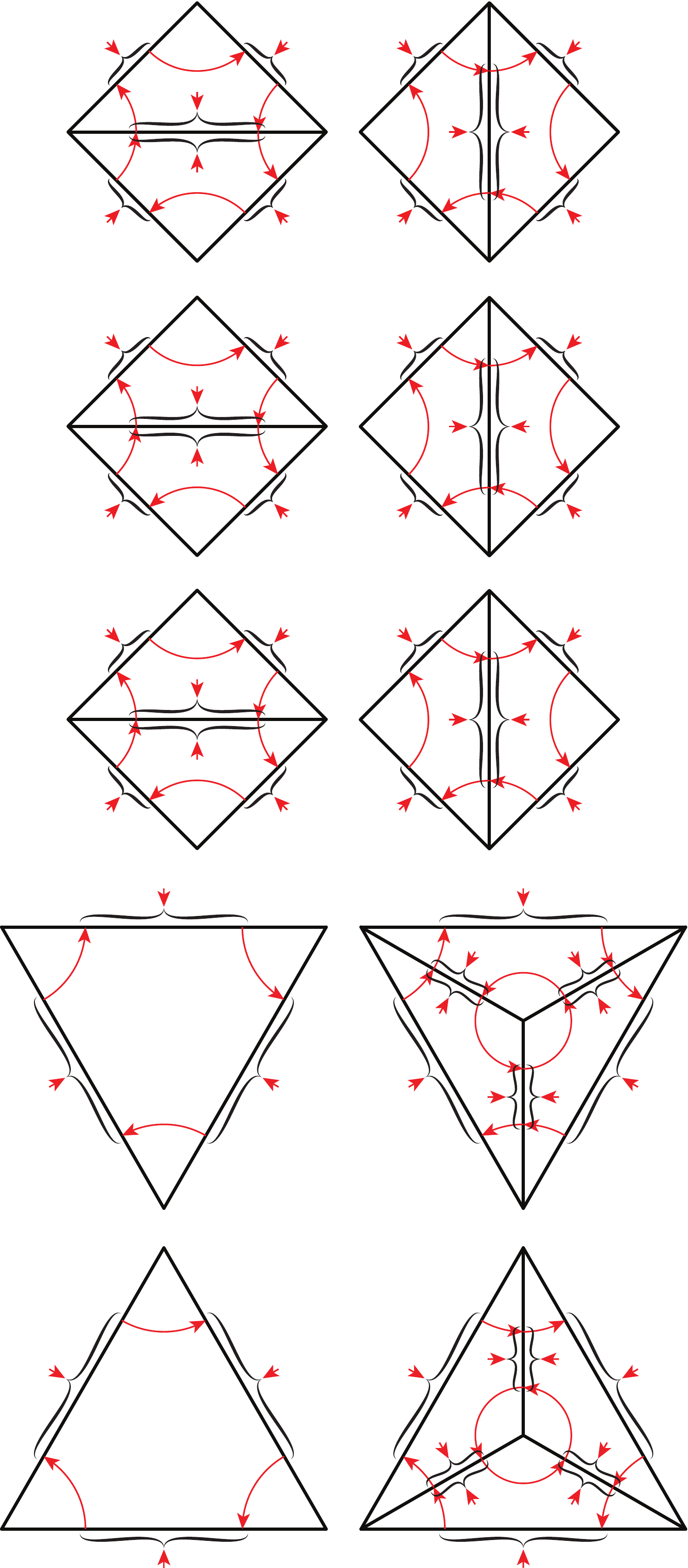}
 \caption{Turning numbers and intersection numbers at all corners of the bipyramid.}
  \label{all_corners_of_ bipyramid}
   \end{center}
 \end{figure}

The identities (i)-(v) are proved by looking separately at the contributions
from the 5 corners of the bipyramid, and carefully studying Figure \ref{all_corners_of_ bipyramid}.

 For example, to prove (i), we note that
\begin{align*} 
&\text{ at vertex 1: }  \bar B_1-\bar A_2-\bar C_3 = 0 - \bar A_2-\bar C_3 = - \bar B_0 - \bar C_0 = \bar A_0  \\
&\text{ at vertex 2: }  \bar B_1-\bar A_2-\bar C_3 =  - \bar B_3 - 0 - \bar C_3 = \bar A_3 =  \bar A_0 \\
&\text{ at vertex 3: }  \bar B_1-\bar A_2-\bar C_3 = \bar B_1 - \bar A_2 - 0 = - \bar B_2 -\bar A_2 = \bar C_2 = \bar A_0 \\
&\text{ at vertex 0: } \bar B_1-\bar A_2-\bar C_3 = \bar B_1 +\bar C_1+\bar A_1 = 0 = \bar A_0\\
&\text{ at vertex 4: } \bar B_1-\bar A_2-\bar C_3 = \bar B_1 + 0 = \bar A_0
\end{align*}
For (ii), we note that
\begin{align*} 
&\text{ at vertex 1: }  \bar A_1+\bar C_3 = 0+\bar C_3 = \bar C_0  \\
&\text{ at vertex 2: }  \bar A_1+\bar C_3 = \bar C_4 +\bar A_4 = - \bar B_4 = \bar C_0 \\
&\text{ at vertex 3: }  \bar A_1+\bar C_3 = \bar A_1 = \bar C_0 \\
&\text{ at vertex 0: } \bar A_1+\bar C_3 = 0 = \bar C_0\\
&\text{ at vertex 4: } \bar A_1+\bar C_3 = -\bar C_2 - \bar A_2 = \bar B_2 = \bar C_0
\end{align*}
The equalities (iii), (iv) are verified similarly.\\

Finally, to prove (v),
first note that since $x_3 = -\bar C_3 = \bar b_3-\bar a_3$, we need to show that
$$\bar c_1+ \bar c_2 - \bar a_3 + 2 \bar b_3 = \bar b_0 + \bar c_4.$$
Now at vertices $0$ and $4$, 
 $\bar b_3 = \bar b_3(0,\kk),\varpi) = 0$,
 since we can choose $\varpi$ as in Figure \ref{1-3move}.
Further, $\bar a_j,\bar b_j,\bar c_j=0$ at vertex $j$. Thus, from Figure \ref{all_corners_of_ bipyramid},
\begin{align*} 
&\text{ at vertex 0: } LHS=\bar c_1+\bar c_2-\bar a_3=\bar b_4+\bar c_4-\bar b_4=\bar c_4 = RHS  \text{ from Figure \ref{1-3move}}\\
&\text{ at vertex 4: } LHS = \bar c_1+\bar c_2-\bar a_3=\bar b_0+\bar a_0-\bar a_0= \bar b_0= RHS  \text{ from Figure \ref{1-3move}} \\
&\text{ at vertex 1: }  LHS = \bar c_2 - \bar a_3 + 2 \bar b_3 = \bar b_0 + \bar c_4  \text{  by (\ref{obs2})}  \\
&\text{ at vertex 2: }  LHS = \bar c_1-\bar a_3+2\bar b_3=\bar b_0+\bar c_4  \text{  by (\ref{obs2})}  \\
&\text{ at vertex 3: }  LHS = \bar c_1 +\bar c_2 = \bar b_0+\bar c_4   \text{  by (\ref{obs1})}.
\end{align*}

This completes the proof of  Lemma \ref{lem_bipyramid}.
\end{proof}

\bibliographystyle{hamsalpha}
\bibliography{biblio}

\newcommand{\etalchar}[1]{$^{#1}$}
\def\cprime{$'$} \def\cprime{$'$}
\providecommand{\bysame}{\leavevmode\hbox to3em{\hrulefill}\thinspace}
\providecommand{\href}[2]{#2}
\providecommand{\eprint}{\begingroup \urlstyle{rm}\Url}
\begin{thebibliography}{CCG{\etalchar{+}}94}

\bibitem[AK]{AK}
J{\o}rgen~E. Andersen and Rinat~M. Kashaev, \emph{A {TQFT} from quantum
  {T}eichm\"uller theory}, \eprint{arXiv:1109.6295}, Preprint 2011.

\bibitem[Aki01]{Akiyoshi}
Hirotaka Akiyoshi, \emph{Finiteness of polyhedral decompositions of cusped
  hyperbolic manifolds obtained by the {E}pstein-{P}enner's method}, Proc.
  Amer. Math. Soc. \textbf{129} (2001), no.~8, 2431--2439 (electronic).

\bibitem[BB05]{BB1}
St{\'e}phane Baseilhac and Riccardo Benedetti, \emph{Classical and quantum
  dilogarithmic invariants of flat {${\rm PSL}(2,\Bbb C)$}-bundles over
  3-manifolds}, Geom. Topol. \textbf{9} (2005), 493--569 (electronic).

\bibitem[BB07]{BB2}
\bysame, \emph{Quantum hyperbolic geometry}, Algebr. Geom. Topol. \textbf{7}
  (2007), 845--917.

\bibitem[BDP]{holo-blocks}
Christopher Beem, Tudor Dimofte, and Sara Pasquetti, \emph{Holomorphic blocks
  in three dimensions}, \eprint{arXiv:1211.1986}, Preprint 2012.

\bibitem[BP97]{BP}
Riccardo Benedetti and Carlo Petronio, \emph{Branched standard spines of
  {$3$}-manifolds}, Lecture Notes in Mathematics, vol. 1653, Springer-Verlag,
  Berlin, 1997.

\bibitem[CCG{\etalchar{+}}94]{CCGLS}
Daryl Cooper, Marc Culler, Henry Gillet, Darren~D. Long, and Peter~B. Shalen,
  \emph{Plane curves associated to character varieties of {$3$}-manifolds},
  Invent. Math. \textbf{118} (1994), no.~1, 47--84.

\bibitem[CDW]{SnapPy}
Marc Culler, Nathan~M. Dunfield, and Jeffery~R. Weeks, \emph{{S}nap{P}y, a
  computer program for studying the geometry and topology of 3-manifolds},
  \url{http://snappy.computop.org/}.

\bibitem[DG]{DG}
Tudor Dimofte and Stavros Garoufalidis, \emph{The quantum content of the gluing
  equations}, \eprint{arXiv:1202.6268}, Preprint 2012.

\bibitem[DGGa]{DGG2}
Tudor Dimofte, Davide Gaiotto, and Sergei Gukov, \emph{3-manifolds and 3d
  indices}, \eprint{arXiv:1112.5179}, Preprint 2011.

\bibitem[DGGb]{DGG1}
\bysame, \emph{Gauge theories labelled by three-manifolds},
  \eprint{arXiv:1108.4389}, Preprint 2011.

\bibitem[DLRS10]{DeLoera:book}
Jes{\'u}s~A. De~Loera, J{\"o}rg Rambau, and Francisco Santos,
  \emph{Triangulations. structures for algorithms and applications}, Algorithms
  and Computation in Mathematics, vol.~25, Springer-Verlag, Berlin, 2010.

\bibitem[EP88]{EP}
D.~B.~A. Epstein and R.~C. Penner, \emph{Euclidean decompositions of noncompact
  hyperbolic manifolds}, J. Differential Geom. \textbf{27} (1988), no.~1,
  67--80.

\bibitem[FKB08]{FK}
Charles Frohman and Joanna Kania-Bartoszynska, \emph{The quantum content of the
  normal surfaces in a three-manifold}, J. Knot Theory Ramifications
  \textbf{17} (2008), no.~8, 1005--1033.

\bibitem[Gara]{Ga:data}
Stavros Garoufalidis, \emph{3d index data},
  \url{http://www.math.gatech.edu/~stavros/publications/3Dindex.data}.

\bibitem[Garb]{Ga:index}
\bysame, \emph{The {3D} index of an ideal triangulation and angle structures},
  \eprint{arXiv:1208.1663}, Preprint 2012.

\bibitem[GK]{GK}
Stavros Garoufalidis and Rinat Kashaev, \emph{The $q$-dilogarithm, its
  state-integrals and their $q$-series}, Preprint 2013.

\bibitem[GKZ94]{GKZ}
I.~M. Gel{\cprime}fand, M.~M. Kapranov, and A.~V. Zelevinsky,
  \emph{Discriminants, resultants, and multidimensional determinants},
  Mathematics: Theory \& Applications, Birkh\"auser Boston Inc., Boston, MA,
  1994.

\bibitem[GL]{GL2}
Stavros Garoufalidis and Thang T.~Q. L{\^e}, \emph{Nahm sums, stability and the
  colored {J}ones polynomial}, \eprint{arXiv:1112.3905}, Preprint 2011.

\bibitem[GS10]{GS}
Fran{\c{c}}ois Gu{\'e}ritaud and Saul Schleimer, \emph{Canonical triangulations
  of {D}ehn fillings}, Geom. Topol. \textbf{14} (2010), no.~1, 193--242.

\bibitem[GV]{GV}
Stavros Garoufalidis and Thao Vuong, \emph{Alternating knots, planar graphs and
  {$q$}-series}, Preprint 2013.

\bibitem[GZ]{GZ}
Stavros Garoufalidis and Don Zagier, \emph{Empirical relations between
  $q$-series and {K}ashaev's invariant of knots}, Preprint 2013.

\bibitem[Hat07]{Hat}
Allen Hatcher, \emph{Notes on basic 3-manifold topology}, 2007, Lecture Notes.

\bibitem[HRS12]{HRS}
Craig~D. Hodgson, J.~Hyam Rubinstein, and Henry Segerman, \emph{Triangulations
  of hyperbolic 3-manifolds admitting strict angle structures}, J. of Top.
  \textbf{5} (2012), no.~5, 887--908, 2012; doi: 10.1112/jtopol/jts022.

\bibitem[Jon87]{Jo}
Vaughan F.~R. Jones, \emph{Hecke algebra representations of braid groups and
  link polynomials}, Ann. of Math. (2) \textbf{126} (1987), no.~2, 335--388.

\bibitem[JR03]{JR}
William Jaco and J.~Hyam Rubinstein, \emph{{$0$}-efficient triangulations of
  3-manifolds}, J. Differential Geom. \textbf{65} (2003), no.~1, 61--168.

\bibitem[Kas97]{Ka}
R.~M. Kashaev, \emph{The hyperbolic volume of knots from the quantum
  dilogarithm}, Lett. Math. Phys. \textbf{39} (1997), no.~3, 269--275.

\bibitem[KLV12]{KLV}
Rinat~M. Kashaev, Feng Luo, and Grigory Vartanov, \emph{A {TQFT} of
  {T}uraev-{V}iro type on shaped triangulations}, 2012,
  \eprint{arXiv:1210.8393}, Preprint.

\bibitem[KR04]{KR1}
Ensil Kang and J.~Hyam Rubinstein, \emph{Ideal triangulations of 3-manifolds.
  {I}. {S}pun normal surface theory}, Proceedings of the {C}asson {F}est, Geom.
  Topol. Monogr., vol.~7, Geom. Topol. Publ., Coventry, 2004,
  \eprint{arXiv:math.GT/0410541}, pp.~235--265.

\bibitem[KR05]{KR2}
\bysame, \emph{Ideal triangulations of 3-manifolds. {II}. {T}aut and angle
  structures}, Algebr. Geom. Topol. \textbf{5} (2005), 1505--1533.

\bibitem[Lac00a]{Lackenby2}
Marc Lackenby, \emph{Taut ideal triangulations of 3-manifolds}, Geom. Topol.
  \textbf{4} (2000), 369--395 (electronic).

\bibitem[Lac00b]{Lackenby}
\bysame, \emph{Word hyperbolic {D}ehn surgery}, Invent. Math. \textbf{140}
  (2000), no.~2, 243--282.

\bibitem[Lod04]{Loday}
Jean-Louis Loday, \emph{Realization of the {S}tasheff polytope}, Arch. Math.
  (Basel) \textbf{83} (2004), no.~3, 267--278.

\bibitem[LT08]{LT}
Feng Luo and Stephan Tillmann, \emph{Angle structures and normal surfaces},
  Trans. Amer. Math. Soc. \textbf{360} (2008), no.~6, 2849--2866.

\bibitem[Mac71]{MacLane}
Saunders MacLane, \emph{Categories for the working mathematician},
  Springer-Verlag, New York, 1971, Graduate Texts in Mathematics, Vol. 5.

\bibitem[Mat87]{Ma1}
Sergei~V. Matveev, \emph{Transformations of special spines, and the {Z}eeman
  conjecture}, Izv. Akad. Nauk SSSR Ser. Mat. \textbf{51} (1987), no.~5,
  1104--1116, 1119.

\bibitem[Mat07]{Ma2}
\bysame, \emph{Algorithmic topology and classification of 3-manifolds}, second
  ed., Algorithms and Computation in Mathematics, vol.~9, Springer, Berlin,
  2007.

\bibitem[Neu92]{Neumann-combi}
Walter~D. Neumann, \emph{Combinatorics of triangulations and the
  {C}hern-{S}imons invariant for hyperbolic {$3$}-manifolds}, Topology '90
  ({C}olumbus, {OH}, 1990), Ohio State Univ. Math. Res. Inst. Publ., vol.~1, de
  Gruyter, Berlin, 1992, pp.~243--271.

\bibitem[NZ85]{NZ}
Walter~D. Neumann and Don Zagier, \emph{Volumes of hyperbolic three-manifolds},
  Topology \textbf{24} (1985), no.~3, 307--332.

\bibitem[Pet95]{Petronio:thesis}
Carlo Petronio, \emph{Standard spines and 3-manifolds}, Ph.D. thesis, Scuola
  Normale Superiore, Pisa, 1995.

\bibitem[Pie88]{Pi}
Riccardo Piergallini, \emph{Standard moves for standard polyhedra and spines},
  Rend. Circ. Mat. Palermo (2) Suppl. (1988), no.~18, 391--414, Third National
  Conference on Topology (Italian) (Trieste, 1986).

\bibitem[Rub97]{Rub}
J.~H. Rubinstein, \emph{Polyhedral minimal surfaces, {H}eegaard splittings and
  decision problems for {$3$}-dimensional manifolds}, Geometric topology
  ({A}thens, {GA}, 1993), AMS/IP Stud. Adv. Math., vol.~2, Amer. Math. Soc.,
  Providence, RI, 1997, pp.~1--20.

\bibitem[San06]{Santos:ICM}
Francisco Santos, \emph{Geometric bistellar flips: the setting, the context and
  a construction}, International {C}ongress of {M}athematicians. {V}ol. {III},
  Eur. Math. Soc., Z\"urich, 2006, pp.~931--962.

\bibitem[Sta63]{Stasheff:1963}
James~Dillon Stasheff, \emph{Homotopy associativity of {$H$}-spaces. {I},
  {II}}, Trans. Amer. Math. Soc. 108 (1963), 275-292; ibid. \textbf{108}
  (1963), 293--312.

\bibitem[Sta97]{Stasheff:Luminy}
Jim Stasheff, \emph{From operads to ``physically'' inspired theories}, Operads:
  {P}roceedings of {R}enaissance {C}onferences ({H}artford, {CT}/{L}uminy,
  1995), Contemp. Math., vol. 202, Amer. Math. Soc., Providence, RI, 1997,
  pp.~53--81.

\bibitem[Sto00]{Sto}
Michelle Stocking, \emph{Almost normal surfaces in {$3$}-manifolds}, Trans.
  Amer. Math. Soc. \textbf{352} (2000), no.~1, 171--207.

\bibitem[Thu77]{Th}
William Thurston, \emph{The geometry and topology of 3-manifolds}, 1977,
  Lecture notes, Princeton.

\bibitem[Wee85]{Weeks:thesis}
Jeffrey Weeks, \emph{Hyperbolic structures on three-manifolds}, Ph.D. thesis,
  Princeton University, 1985.

\end{thebibliography}
\end{document}